\DeclareMathOperator{\Aut}{Aut}
\newtheorem{theorem}[subsection]{Theorem}
\newtheorem{corollary}[subsection]{Corollary}
\newtheorem{lemma}[subsection]{Lemma}
\newtheorem{proposition}[subsection]{Proposition}
\newtheorem{remark}[subsection]{Remark}
\theoremstyle{definition}
\newtheorem{defn}[subsection]{Definition}
\theoremstyle{remark}
 \def\bCx{\mathbb C\langle x_1,\dots,x_g\rangle}
 \def\bCxshort{\mathbb C\langle x\rangle}
\def\cFg{\langle x\rangle}
 \def\cFgtwo{\mathcal F_g}
\def\e{ \exp(i\theta) }
\def\cR{\mathcal R}
 \def\cD{\mathcal D}
 \def\cN{\mathcal N}
 \def\matng{M_n(\mathbb C)^g}
 \def\matnh{M_n(\mathbb C)^{\h}}
 \def\cL{\mathcal L}
 \def\h{\tilde{g}}
 \def\math{M(\mathbb C)^{\h}}
 \def\matg{M(\mathbb C)^g}
 \def\cU{\mathcal U}
 \def\cH{\mathcal H}
 \def\cV{\mathcal V}
 \def\cP{\mathcal P}
 \def\vy{\mathbf y}
 \def\cW{\mathcal W}
 \def\cY{\mathcal Y}
\def\RR{\mathbb R}
\def\R{\mathbb R}
\def\NN{\mathbb N}
\def\N{\mathbb N}
\def\C{\mathbb C}
\newcommand{\eps}{\epsilon}
\def\ben{\begin{enumerate} }
\def\een{\end{enumerate} }
\def\beq{\begin{equation} }
\def\eeq{\end{equation} }
\def\tg{\tilde g}
\def\fm{f^{(m)}}
\def\nca{free map }
\def\ncac{free map, }
\def\ncas{free  maps }
\def\ncasp{free  maps. }
\def\ncap{free  map. }
\def\ncacomma{free  map, }
\def\cncap{free analytic   map. }
\def\cncasp{free analytic   maps. }
\def\cncasc{free analytic   maps, }
\def\cncacomman{ free  analytic map, }
\def\cnca{free analytic map }
\def\cncas{free analytic maps }
\def\fba{free biholomorphism }
\def\fbap{free biholomorphism. }
\def\fbasp{free biholomorphisms. }
\def\fn{f[n]}
\def\fm{f[m]}
\def\fone{f[1]}
\def\fhm{f^{(m)}}
\def\fhmn{\fhm[n]}
\def\Rx{\mathbb R\langle x \rangle} 
\def\SRng{\mathbb S_n^g}
\renewcommand{\setminus}{\smallsetminus}
\renewcommand{\theequation}{(\arabic{equation})}
\renewcommand{\c@equation}{\c@subsection}
\numberwithin{equation}{section}
  \def\@eqnnum{{\normalfont \normalcolor \theequation}}
  \def\tagform@#1{\maketag@@@{#1\@@italiccorr}}
\begin{document}

\setcounter{page}{1}

\title[Free analysis and LMI domains]{Free analysis, convexity and LMI domains}

\author[Helton]{J. William Helton${}^1$}
\address{J. William Helton\\ Department of Mathematics\\
  University of California \\
  San Diego}
\email{helton@math.ucsd.edu}
\thanks{${}^1$Research supported by NSF grants
DMS-0700758, DMS-0757212, and the Ford Motor Co.}

\author[Klep]{Igor Klep${}^2$}
\address{Igor Klep\\ Department of Mathematics\\
The University of Auckland, New Zealand
}
\email{igor.klep@auckland.ac.nz}
\thanks{${}^2$Research supported by the Slovenian Research Agency grants
J1-3608 and P1-0222. The article was written while the author was on leave from 
the University of Maribor.}

\author[McCullough]{Scott McCullough${}^3$}
\address{Scott McCullough\\ Department of Mathematics\\
  University of Florida, Gainesville 
   }
   \email{sam@math.ufl.edu}
\thanks{${}^3$Research supported by the NSF grant DMS-0758306.}

\subjclass[2010]{46L52, 47A56, 32A05, 46G20 (Primary).
47A63, 32A10, 14P10 (Secondary)}

\keywords{noncommutative set and function,
analytic map, proper map, rigidity,
linear matrix inequality,
several complex variables, free analysis, free real algebraic geometry}

\date{\today}

\setcounter{tocdepth}{2}
\contentsmargin{2.55em} 
\dottedcontents{section}[3.8em]{}{2.3em}{.4pc} 
\dottedcontents{subsection}[6.1em]{}{3.2em}{.4pc}

\makeatletter
\newcommand{\mycontentsbox}{%
{\centerline{NOT FOR PUBLICATION}
\small\tableofcontents}}
\def\enddoc@text{\ifx\@empty\@translators \else\@settranslators\fi
\ifx\@empty\addresses \else\@setaddresses\fi
\newpage\mycontentsbox}
\makeatother

\begin{abstract} 
This paper concerns  {\it free analytic  maps}
on {\it noncommutative domains}. These 
 maps
are free analogs of classical holomorphic functions in several
complex variables, and
 are defined in terms of noncommuting variables
amongst which there are no relations - they are 
{\em free} variables. 
Free analytic  maps
include vector-valued
polynomials in free (noncommuting) variables and
form a canonical class of mappings from
one noncommutative domain $\cD$ in say $g$ variables
to another noncommutative domain $\tilde \cD$ in $\tg$ variables.

Motivated by determining the possibilities for mapping a nonconvex noncommutative domain to a convex noncommutative domain, this
article focuses on rigidity results for free analytic maps.  Those obtained 
to date, parallel and are often stronger than those in several complex
variables.  For instance, a proper free analytic map between noncommutative domains
is one-one and, if $\tg=g$, free biholomorphic.  Making its debut
here is a free version of a theorem of Braun-Kaup-Upmeier:
between two freely biholomorphic bounded circular
noncommutative domains there exists a {\em linear} biholomorphism.
An immediate consequence is the following {\em nonconvexification} result:
 if two bounded circular noncommutative domains are 
  freely biholomorphic, then they are either both convex or both not convex. 
Because of their roles in systems engineering,
\emph{linear matrix inequalities} (LMIs) and 
noncommutative domains 
defined by an LMI ({\em LMI domains})
 are of particular interest.  As a refinement of above the nonconvexification result,
 if a bounded circular noncommutative
 domain $\cD$ is freely biholomorphic to a bounded circular
 LMI domain, then $\cD$ is itself an LMI domain. 
\end{abstract}

\maketitle

\section{Introduction}
 \label{sec:intro}
  The notion of an analytic, free  or noncommutative,
  map arises naturally in free
  probability,
  the study of noncommutative (free) rational functions
  \cite{BGM,Vo1,Vo2,AIM,MS11, KVV},
 and
systems theory \cite{HBJP}.
  In this paper
  rigidity results for such functions  paralleling those
  for their classical commutative counterparts are presented.
 Often in the noncommutative (nc) setting such theorems have cleaner 
 statements than their commutative counterparts.
 Among these we shall present the following:

\ben[\rm (1)]
\item
  a {\em continuous} free map is {\em analytic} (\S\ref{sec:contAnal}) 
  and hence admits
  a {\em power series} expansion (\S\ref{sec:powerseries});
\item
 if $f$ is a {\em proper} analytic free  map from
  a noncommutative domain in $g$ variables
  to another in $\tg$ variables, then $f$ is {\em injective}
  and $\tg \ge g$. 
  If in addition 
  $\tg=g$, then  $f$  is onto and
  has an inverse which is itself
  a (proper) analytic free map (\S\ref{sec:properInj}).
 This injectivity conclusion contrasts markedly with the classical
 case where a
 (commutative) {\em proper} 
  analytic function $f$ from one domain in $\mathbb C^g$
  to another in $\mathbb C^g,$  need not be injective,
  although it must be onto.
\item
  A free Braun-Kaup-Upmeier theorem (\S\ref{sec:ku}). 
  A free analytic map $f$ is called a free biholomorphism if
  $f$ has an inverse $f^{-1}$ which is also a
  free analytic map.  As an extension of a
  theorem from \cite{BKU}, two bounded,  circular, noncommutative domains
  are freely {\em biholomorphic} if and only if they are freely
  {\em linearly} biholomorphic. 
\item
  Of special interest are free analytic mappings from or to or both from and to
  noncommutative domains defined by linear matrix inequalities, or LMI 
  domains.  Several additional  recent results in this direction, as well as 
  a concomitant free {\em convex Positivstellensatz} (\S\ref{sec:Posss}), are also included. 
\een
  Thus this article is largely a survey.  The results of items
 (1), (2), and (4) appear elsewhere. However, 
the main result of (3) is new. 
 Its proof relies on the existence of power series expansions for
 analytic free maps,
a topic we discuss as part of (1) in \S\ref{sec:powerseries} below.  
 Our treatment is modestly different from  that found in
 \cite{Vo2,KVV}.

 For the classical theory of 
  commutative proper analytic maps
  see D'Angelo \cite{DAn} or Forstneri\v c \cite{For}.
We assume the reader is familiar with basics of several
complex variables as given e.g.~in Krantz \cite{Krantz}.

\subsection{Motivation}\label{subsec:motiv}
One of the main advances in systems engineering in the 1990's
 was the conversion of a set of problems to 
{\em linear matrix inequalities} ({\em LMIs}),
 since LMIs, up to modest size, can be solved numerically
 by semidefinite programs \cite{SIG97}.
 A large  class of linear
 systems problems are described in terms of
 a signal-flow diagram $\Sigma$ plus $L^2$
 constraints (such as energy dissipation).
 Routine methods convert such problems into
 noncommutative polynomial inequalities
 of the form  $p(X)\succeq 0$ or $p(X)\succ 0$.

 Instantiating
 specific  systems of linear differential
 equations  for the ``boxes'' in the system flow diagram
 amounts to substituting  their coefficient matrices
 for variables in the polynomial $p$.
 Any property  asserted to  be true must hold
 when matrices of any size are substituted into $p$.
 Such problems are referred to as {\em dimension-free}.
 We emphasize, the polynomial $p$ itself is determined
 by the signal-flow diagram $\Sigma$.

Engineers vigorously {\em seek convexity}, since optima are global
and convexity lends itself to numerics.
Indeed,  there are over  a thousand papers trying to convert
linear systems problems to convex ones and the only known technique is
the rather blunt trial and error instrument of trying to guess an LMI.
Since  having an LMI is seemingly  more restrictive than convexity,
there has been the hope, indeed expectation, that some practical
class of convex situations   has been missed.

Hence a main goal of this line of research has been to determine
which {\em changes of variables} can produce convexity
from nonconvex situations.
As we shall see below, a \cnca between noncommutative domains
cannot produce convexity from a nonconvex set, at
 least under a circularity hypothesis. 
Thus we think the implications of our results here are negative
for linear systems engineering; for dimension-free problems the 
evidence here is that 
there is no convexity beyond the obvious.

\subsection{Reader's guide}
 The definitions as used in this paper are given in the
 following section \S\ref{sec:maps}, which
contains the background on
  {\it noncommutative domains} and on {\it free maps}
  at the level of generality needed for this paper.
  As we shall see, free maps that are continuous are also analytic
(\S\ref{sec:contAnal}). 
We explain, in \S\ref{sec:powerseries}, how to associate a power series expansion
to an analytic free map using the noncommutative Fock space.
One typically thinks of free maps
  as being analytic, but in a weak sense.
In \S\ref{sec:main-result} we consider {\em proper} free maps
and give several rigidity theorems.
For instance, proper analytic free maps are injective (\S\ref{sec:properInj})
and, under mild additional assumptions, tend to be linear (see 
\S\ref{sec:analogs} and \S\ref{sec:ku}  for precise statements). 
Results paralleling classical results on analytic maps in several
complex variables, such as the Carath\'eodory-Cartan-Kaup-Wu (CCKW) Theorem, are given in
\S\ref{sec:analogs}.
 A new result - a free version of the Braun-Kaup-Upmeier (BKU) theorem - appears in 
 \S\ref{sec:ku}.  
A brief overview of further topics, including links to
references, is given in \S\ref{sec:misc}.
Most of the material presented in this paper has been motivated
by problems in systems engineering, and this was discussed briefly
above in \S\ref{subsec:motiv}.

\section{Free maps}
 \label{sec:maps}

  This section contains the background on
  noncommutative sets and on  {\it free maps}
  at the level of generality needed for this paper.
  Since power series are used in \S\ref{sec:ku},
  included at the end of this section 
  is a sketch of an argument showing that
  continuous free maps have formal power series expansions.
  The discussion borrows heavily from
the  recent basic work of
Voiculescu \cite{Vo1, Vo2} and of
Kalyuzhnyi-Verbovetski\u\i{} and Vinnikov
\cite{KVV}, see also the references therein.
These papers contain a more power series based approach
to free maps
and for more on this one can
see Popescu \cite{Po1,Po2}, or also \cite{HKMS,HKM1,HKM3}.

\subsection{Noncommutative sets and domains}
 \label{subsec:sets}
  Fix a positive integer $g$.
  Given a positive integer $n$, let $\matng$ denote
  $g$-tuples of  $n\times n$ matrices.
  Of course, $\matng$ is naturally identified with
 $M_n(\mathbb C) \otimes\mathbb C^g.$

\renewcommand{\subset}{\subseteq}
   A sequence $\cU=(\cU (n))_{n\in\N},$ where $\cU (n) \subset \matng$,
 is a {\bf noncommutative set} \index{noncommutative set}
 if it is \index{closed with respect to unitary similarity}
 {\bf closed with respect to simultaneous unitary similarity}; i.e.,
 if $X\in \cU (n)$ and $U$ is an $n\times n$ unitary matrix, then
\beq\label{eq:setU}
   U^\ast X U =(U^\ast X_1U,\dots, U^\ast X_gU)\in \cU (n);
\eeq
 and if it is \index{closed with respect to direct sums}
 {\bf closed with respect to direct sums}; i.e.,
  if $X\in \cU (n)$ and $Y\in \cU(m)$ implies
 \beq\label{eq:setS}
   X\oplus Y = \begin{bmatrix} X & 0\\ 0 & Y \end{bmatrix}
     \in \cU(n+m).
 \eeq
 
  Noncommutative sets differ from
  the fully matricial $\C^g$-sets of Voiculescu
 \cite[Section 6]{Vo1} in that the latter are closed
 with respect to simultaneous similarity, not just
    simultaneous {\em unitary} similarity.
  Remark \ref{rem:sim-vs-unitary} below  briefly discusses  the significance
  of this distinction for the results on proper analytic
  free maps in this paper.

   The noncommutative set $\cU$ is a
  {\bf noncommutative domain} if each
  $\cU (n)$ is nonempty, open and connected.
  \index{noncommutative domain}
 Of course the sequence $\matg=(\matng)$
 is  itself a noncommutative domain. Given $\varepsilon>0$,
 the set $\cN_\varepsilon = (\cN_\varepsilon(n))$ given by
\beq\label{eq:nbhd}
  \cN_\varepsilon(n)=\big\{X\in\matng : \sum X_j X_j^\ast \prec \varepsilon^2  \big\}
\eeq
 is a noncommutative domain which we
 call the
  {\bf noncommutative $\varepsilon$-neighborhood of $0$ in $\mathbb C^g$}.
  \index{noncommutative neighborhood of $0$}
  The noncommutative set $\cU$ is {\bf bounded}
  \index{bounded, noncommutative set} if there
  is a $C\in\R$ such that
\beq\label{eq:bd}
   C^2 -\sum X_j X_j^\ast \succ 0
 \eeq
   for every $n$ and $X\in\cU(n)$. Equivalently, for
   some $\lambda\in\RR$, we have $\cU\subset \cN_\lambda$.
  Note that this condition is stronger than asking
 that each $\cU(n)$ is bounded.

  Let $\bCxshort=\bCx$ denote the $\C$-algebra freely generated by $g$
noncommuting letters $x=(x_1,\ldots,x_g)$. Its elements are
linear combinations of words in $x$ and are called
(analytic)
{\bf polynomials}.
  Given an $r\times r$ matrix-valued polynomial
  $p\in M_r(\mathbb C) \otimes \bCx$ with
  $p(0)=0$, let $\cD(n)$ denote the connected
  component of
\beq\label{eq:saset}
  \{X\in \matng : I+p(X)+p(X)^*  \succ 0\}
\eeq
  containing the origin.
  The sequence $\cD=(\cD(n))$ is a noncommutative domain
  which is semi-algebraic in nature.
  Note that $\cD$ contains an $\varepsilon>0$ neighborhood
  of $0,$ and that the choice
 \[
   p=  \frac{1}{\varepsilon}
    \begin{bmatrix}\;\; 0_{g\times g}  & \begin{matrix} x_1 \\ \vdots \\x_g\end{matrix} \\
            \;\;  0_{1\times g}  & 0_{1\times 1} \end{bmatrix}
 \]
  gives $\cD = \cN_\varepsilon$.
  Further examples of natural noncommutative domains
  can be generated by considering noncommutative polynomials
  in both the variables $x=(x_1,\dots,x_g)$
  and their formal adjoints, $x^*=(x_1^*,\dots,x_g^*)$.
  For us the motivating case of domains is determined
  by linear matrix inequalities (LMIs).

\subsection{LMI domains}
  A special case of the noncommutative domains
  are those described by a linear matrix inequality. 
  Given a positive integer $d$
  and  $A_1,\dots,A_g \in M_d(\C),$ the
  linear matrix-valued polynomial
\beq\label{eq:linPenc}
  L(x)=\sum A_j x_j\in M_d(\C)\otimes \bCx
\eeq
  is a {\bf (homogeneous) linear pencil}.\index{homogeneous linear pencil}
  Its adjoint is, by definition,
$
  L(x)^*=\sum A_j^* x_j^*.
$
  Let
\[
  \cL(x) = I_d + L(x) +L(x)^*.
\]
  If $X\in \matng$, then $\cL(X)$ is defined by the canonical substitution,
\[
  \cL(X) = I_d\otimes I_n
          +\sum A_j\otimes X_j +\sum A_j^* \otimes X_j^*,
\]
 and yields a symmetric $dn\times dn$ matrix.
  The inequality $\cL(X)\succ 0$ for tuples $X\in\matg$
  is a {\bf linear matrix inequality (LMI)}. \index{LMI}
  \index{linear matrix inequality} The sequence of solution sets
 $\cD_{\cL}$ defined by
\beq\label{eq:LMI}
 \cD_{\cL}(n) = \{X\in\matng : \cL(X)\succ 0\}
\eeq
 is a noncommutative domain which contains a neighborhood
 of $0$. It is called a {\bf noncommutative (nc) LMI domain}.
 It is also a particular instance of 
a noncommutative semialgebraic set.

\subsection{Free mappings}
 \label{subsec:nc-maps}
 Let $\cU$ denote a noncommutative subset of $\matg$
 and let $\h$ be a positive integer.
 A
 {\bf \nca}\index{\nca}
  $f$ from $\cU$ into $\math$ is a sequence
 of functions $\fn:\cU(n) \to\matnh$ which
{\bf respects direct sums}:
  for each $n,m$ and $X\in\cU(n)$ and  $Y\in \cU(m),$
 \beq\label{eq:mapsS}
   f(X\oplus Y)=f(X) \oplus f(Y);
 \eeq
and
   {\bf respects similarity}: for each $n$ and
  $X,Y\in \cU(n)$ and invertible $n\times n$
  matrix $\Gamma$
  such that 
\beq
  X\Gamma = (X_1\Gamma, \dots, X_g\Gamma)
   = (\Gamma Y_1,\dots, \Gamma Y_g) =\Gamma Y
\eeq
we have
\beq\label{eq:mapsU}
   f(X) \Gamma = \Gamma f(Y).
\eeq
  Note if $X\in\cU(n)$ it is natural
  to write simply $f(X)$ instead of
 the more cumbersome $\fn(X)$ and
 likewise $f:\cU\to \math$.

We say $f$
  {\bf respects intertwining maps}
 if $X\in\cU(n)$, $Y\in\cU(m)$, $\Gamma:\mathbb C^m\to\mathbb C^n$,
  and  $X\Gamma=\Gamma Y$
  implies $\fn(X) \Gamma =  \Gamma \fm (Y)$.
  The following proposition gives an alternate characterization
  of \ncasp 
Its easy proof is left to the reader (alternately,
see \cite[Proposition 2.2]{HKM3}).

 \begin{proposition}
  \label{prop:nc-map-alt}
   Suppose $\cU$ is a noncommutative subset of $\matg$.  A sequence
   $f=(\fn)$  of functions $\fn:\cU(n)\to \matnh$
  is a \nca if and only if it
  respects intertwining maps.
 \end{proposition}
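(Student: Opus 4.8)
The plan is to prove the two implications separately, with the "only if" direction being essentially trivial and the "if" direction requiring the standard direct-sum-plus-conjugation trick.

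First I would dispose of the easy direction: suppose $f$ respects intertwining maps. Then in particular, taking $m=n$ and $\Gamma$ invertible, the hypothesis $X\Gamma = \Gamma Y$ gives $f(X)\Gamma = \Gamma f(Y)$, which is exactly the ``respects similarity'' condition \eqref{eq:mapsU}. For the direct sum property, given $X\in\cU(n)$ and $Y\in\cU(m)$, I would use the two natural inclusions $\iota_1\colon\mathbb C^n\to\mathbb C^{n+m}$ and $\iota_2\colon\mathbb C^m\to\mathbb C^{n+m}$, which satisfy $(X\oplus Y)\iota_1 = \iota_1 X$ and $(X\oplus Y)\iota_2=\iota_2 Y$. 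Applying the intertwining hypothesis to each gives $f(X\oplus Y)\iota_1 = \iota_1 f(X)$ and $f(X\oplus Y)\iota_2=\iota_2 f(Y)$; since the ranges of $\iota_1,\iota_2$ together span $\mathbb C^{n+m}$ and are $f(X\oplus Y)$-invariant with the indicated block actions, this forces $f(X\oplus Y) = f(X)\oplus f(Y)$, which is \eqref{eq:mapsS}. Here I must be slightly careful that $X\oplus Y\in\cU(n+m)$, but this is guaranteed since $\cU$ is a noncommutative set, hence closed under direct sums.

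The substantive direction is ``only if'': assume $f$ is a free map and let $X\in\cU(n)$, $Y\in\cU(m)$, $\Gamma\colon\mathbb C^m\to\mathbb C^n$ with $X\Gamma = \Gamma Y$; I want $f(X)\Gamma = \Gamma f(Y)$. The idea is to realize $\Gamma$ as (a corner of) a similarity between two direct sums. Consider $Z = X\oplus Y\in\cU(n+m)$ and the block matrix
\[
  S = \begin{bmatrix} I_n & \Gamma \\ 0 & I_m \end{bmatrix},
\]
which is invertible with inverse $\begin{bmatrix} I_n & -\Gamma \\ 0 & I_m\end{bmatrix}$. A direct computation using $X\Gamma = \Gamma Y$ shows that $S^{-1} Z S = X\oplus Y$ again, i.e. $ZS = SZ$ — wait, one should instead check $S(X\oplus Y)S^{-1}$ or, more usefully, conjugate in the right direction so that the off-diagonal block produces the relation $X\Gamma = \Gamma Y$. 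Concretely, writing $W = \begin{bmatrix} X & X\Gamma - \Gamma Y \\ 0 & Y\end{bmatrix}$ block-entrywise, one has $W = S^{-1}(X\oplus Y)S$ when evaluated componentwise, and the hypothesis $X\Gamma=\Gamma Y$ collapses the $(1,2)$ block to zero, so $S^{-1}(X\oplus Y)S = X\oplus Y$; hence $(X\oplus Y)S = S(X\oplus Y)$. Now apply the ``respects similarity'' property \eqref{eq:mapsU} with $\Gamma$ there equal to our invertible $S$ (and $X=Y=Z$): we get $f(Z)S = S f(Z)$. Since $f(Z) = f(X)\oplus f(Y)$ by \eqref{eq:mapsS}, writing out the equation $\big(f(X)\oplus f(Y)\big)S = S\big(f(X)\oplus f(Y)\big)$ in block form and reading off the $(1,2)$ entry yields precisely $f(X)\Gamma = \Gamma f(Y)$, as desired.

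The main obstacle, such as it is, lies in the last step: verifying that the off-diagonal block of the commutation relation $(f(X)\oplus f(Y))S = S(f(X)\oplus f(Y))$ is exactly $f(X)\Gamma - \Gamma f(Y)$, and that the diagonal blocks are automatically consistent. This is a routine $2\times 2$ block computation but it is the place where the argument actually ``uses'' both defining properties of a free map simultaneously; everything else is bookkeeping. I would also remark that the argument tacitly requires $\cU$ to be closed under direct sums (so that $X\oplus Y\in\cU(n+m)$ and $f(X\oplus Y)$ is defined) and under unitary similarity only for the converse direction; the similarity used in the forward direction, $S$, need not be unitary, which is why the definition of free map demands invariance under \emph{all} invertible intertwiners rather than just unitaries.
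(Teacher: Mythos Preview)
Your proof is correct and follows the standard argument. The paper itself omits the proof entirely (``Its easy proof is left to the reader''), so there is nothing to compare against directly; but your direct-sum-plus-upper-triangular-conjugation trick is precisely the expected one, and indeed the same block matrix $S=\begin{bmatrix} I & \Gamma \\ 0 & I\end{bmatrix}$ reappears in the paper's proof of Proposition~\ref{prop:pow-nc-map}.
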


\begin{remark}\rm
 \label{rem:sim-vs-unitary}
   Let $\cU$ be a noncommutative domain in $M(\C)^g$ and
  suppose $f:\cU\to M(\C)^{\tg}$ is a free map. If $X\in \cU$
  is similar to $Y$ with $Y=S^{-1}X S$, then we can 
  define $f(Y) = S^{-1}f(X)S$. In this way $f$ naturally
 extends to a free map on $\cH(\cU)\subset M(\C)^g$ defined by
\[
  \cH(\cU)(n)=\{Y\in M_n(\C)^g: \text{ there is an } X\in \cU(n)
     \text{ such that $Y$ is similar to $X$}\}.
\]
  Thus if $\cU$ is  a domain of holomorphy, then $\cH(\cU)=\cU$.

 On the other hand, because our results on proper analytic free maps
  to come depend strongly upon 
  the  noncommutative set $\cU$ itself,
   the distinction between noncommutative sets and
   fully matricial sets as in  \cite{Vo1} is important.
    See also  \cite{HM,HKM2,HKM3}.
\end{remark}

 We close this subsection with a simple observation:

\begin{proposition}
 \label{prop:range}
  If $\cU$ is a noncommutative subset of $\matg$ and
  $f:\cU\to \math$ is a \ncacomma then the range of $f$, equal to
  the sequence $f(\cU)=\big(f[n](\cU(n))\big)$, is itself
  a noncommutative subset of $\math$.
\end{proposition}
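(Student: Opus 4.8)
The plan is to verify directly that the range $f(\cU)=\big(f[n](\cU(n))\big)$ satisfies the two defining closure conditions of a noncommutative set: closure under simultaneous unitary similarity and closure under direct sums. Both will follow from the fact that $f$ is a free map, together with the corresponding closure properties of $\cU$ itself. For closure under direct sums, suppose $Z\in f[n](\cU(n))$ and $W\in f[m](\cU(m))$, so that $Z=f(X)$ and $W=f(Y)$ for some $X\in\cU(n)$ and $Y\in\cU(m)$. Since $\cU$ is a noncommutative set, $X\oplus Y\in\cU(n+m)$, and since $f$ respects direct sums (equation \eqref{eq:mapsS}), $f(X\oplus Y)=f(X)\oplus f(Y)=Z\oplus W$, which therefore lies in $f[n+m](\cU(n+m))$. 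That handles \eqref{eq:setS}.

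For closure under simultaneous unitary similarity, take $Z=f(X)\in f[n](\cU(n))$ with $X\in\cU(n)$, and let $U$ be an $n\times n$ unitary. Since $\cU$ is a noncommutative set, $U^\ast X U\in\cU(n)$. Now apply the similarity-respecting property \eqref{eq:mapsU} with $\Gamma=U$: from $X U = U (U^\ast X U)$ we get $f(X) U = U\, f(U^\ast X U)$, hence $f(U^\ast X U)=U^\ast f(X) U = U^\ast Z U$. This shows $U^\ast Z U\in f[n](\cU(n))$, establishing \eqref{eq:setU}. (Alternatively one may simply invoke the ``respects intertwining maps'' characterization from Proposition \ref{prop:nc-map-alt}, with the intertwiner being the unitary $U$.)

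Since both closure conditions hold, $f(\cU)$ is a noncommutative subset of $\math$, completing the argument. There is essentially no main obstacle here: the statement is a formal consequence of the definitions, and the only mild point to be careful about is to use the correct intertwining relation so that the unitary $U$ (rather than $U^\ast$) plays the role of $\Gamma$ in \eqref{eq:mapsU}; everything else is bookkeeping. It is worth remarking that the range need not be a noncommutative \emph{domain} even when $\cU$ is, since $f[n](\cU(n))$ need not be open — but openness is not claimed, so this does not affect the proof.
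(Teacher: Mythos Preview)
Your proof is correct. The paper states this proposition as a ``simple observation'' without supplying any proof, and your direct verification of the two closure conditions from the definitions is exactly the intended (and essentially only) argument.
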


\subsection{A continuous \nca is analytic}\label{sec:contAnal}
  Let $\cU\subset \matg$ be a noncommutative set.
  A \nca   $f:\cU\to \math$ is {\bf continuous} if each
  $\fn:\cU(n)\to \matnh$ is continuous. \index{continuous}
  Likewise,
  if $\cU$ is a noncommutative domain, then
  $f$ is called {\bf analytic} if each $\fn$ is analytic.
 \index{analytic}
  This implies the existence of directional derivatives for all directions
 at each point in the domain, and this is the property we 
use often.
Somewhat surprising, though easy to prove, is the following:

\begin{proposition}
 \label{prop:continuous-analytic}
  Suppose $\cU$ is a noncommutative domain in $\matg$.
\ben[\rm (1)]
\item
  A continuous \nca $f:\cU\to \math$ is analytic.
\item If $X\in\cU(n)$, and $H\in\matng$ has sufficiently small norm,
then
 \beq\label{eq:propcontanal}
  f\begin{bmatrix} X & H \\ 0 & X\end{bmatrix}
   = \begin{bmatrix} f(X) & f^\prime(X)[H] \\ 0 & f(X)\end{bmatrix}.
 \eeq
\een
\end{proposition}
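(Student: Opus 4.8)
The plan is to lean on the one strong hypothesis available, namely that $f$ respects intertwining maps (Proposition~\ref{prop:nc-map-alt}), by feeding $f$ suitable $2\times 2$ block matrices, and to invoke continuity only at the very end. First I would fix $X,Y\in\cU(n)$ and $H\in\matng$ of small enough norm that $Z=\begin{bmatrix}X & H\\ 0 & Y\end{bmatrix}\in\cU(2n)$ (possible since $X\oplus Y\in\cU(2n)$ and $\cU(2n)$ is open) and pin down the shape of $f(Z)$. Writing $f(Z)=\begin{bmatrix}A&B\\ C&D\end{bmatrix}$, the intertwining relations
\[
 Z\begin{bmatrix}I_n\\ 0\end{bmatrix}=\begin{bmatrix}I_n\\ 0\end{bmatrix}X,
 \qquad
 Y\begin{bmatrix}0 & I_n\end{bmatrix}=\begin{bmatrix}0 & I_n\end{bmatrix}Z,
\]
together with Proposition~\ref{prop:nc-map-alt}, force $A=f(X)$, $D=f(Y)$ and $C=0$, with no regularity assumed. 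Thus $f\begin{bmatrix}X & H\\ 0 & Y\end{bmatrix}=\begin{bmatrix}f(X) & \Phi(X,Y)[H]\\ 0 & f(Y)\end{bmatrix}$ for a well-defined $\Phi(X,Y)[H]$, and the whole content of the proposition becomes to identify $\Phi(X,X)[H]$ with $f'(X)[H]$ while also establishing that $f'(X)$ exists.

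Next I would record that $\Phi(X,Y)[\,\cdot\,]$ is homogeneous: for $t\neq 0$, conjugating $Z$ by $\operatorname{diag}(I_n,tI_n)$ is a similarity carrying $\begin{bmatrix}X & H\\ 0 & Y\end{bmatrix}$ to $\begin{bmatrix}X & tH\\ 0 & Y\end{bmatrix}$, so the similarity-respecting property gives $\Phi(X,Y)[tH]=t\,\Phi(X,Y)[H]$. The key auxiliary identity I would then prove is a shear computation: since $f$ respects direct sums, $f\big(\operatorname{diag}(X+tH,X)\big)=\operatorname{diag}\big(f(X+tH),f(X)\big)$, and conjugating both sides by $\begin{bmatrix}I_n & I_n\\ 0 & I_n\end{bmatrix}$ (a similarity, admissible for $t$ small enough that both matrices lie in $\cU(2n)$) yields
\[
 f\begin{bmatrix}X+tH & tH\\ 0 & X\end{bmatrix}
  =\begin{bmatrix}f(X+tH) & f(X+tH)-f(X)\\ 0 & f(X)\end{bmatrix},
\]
that is, $\Phi(X+tH,X)[tH]=f(X+tH)-f(X)$. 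Combining this with homogeneity in the middle slot gives $\tfrac{1}{t}\big(f(X+tH)-f(X)\big)=\Phi(X+tH,X)[H]$ for small $t\neq0$.

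Now continuity enters. The map $(s,u)\mapsto\Phi(X+sH,X+uH)[H]$ is a block entry of the continuous map $f[2n]$ evaluated along a continuous path that stays inside $\cU(2n)$ (for $H$ small), hence is continuous; letting $t\to 0$ in the displayed difference quotient shows $t\mapsto f(X+tH)$ is complex-differentiable at $0$ with derivative $\Phi(X,X)[H]$. Carrying this out at every point of $\cU(n)$ with $H$ ranging over the coordinate directions (a preliminary rescaling handles large $H$, using the homogeneity of $\Phi$) shows that $f[n]$ is separately holomorphic; being also continuous, $f[n]$ is holomorphic by Osgood's lemma (see, e.g., \cite{Krantz}). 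This is part~(1), and it simultaneously yields $f'(X)[H]=\Phi(X,X)[H]$, which is exactly the $(1,2)$ block in the formula for $f\begin{bmatrix}X & H\\ 0 & X\end{bmatrix}$ obtained above, giving part~(2) for $\|H\|$ small enough that the argument lies in $\cU(2n)$.

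The step I expect to be the main obstacle is keeping all of these algebraic manipulations inside the open set $\cU(2n)$: neither the shear nor the rescaling by $\operatorname{diag}(I_n,tI_n)$ is a unitary similarity, so $\cU(2n)$ need not be stable under them, and one must argue by openness — the relevant matrices tend to $X\oplus X$ or $X\oplus Y$ as the parameter tends to $0$ — that the manipulations are legitimate for small parameters. This is precisely why one should extract the homogeneity of $\Phi$ first and only afterwards pass to the limit $t\to0$. Everything else is routine $2\times2$ block bookkeeping, together with the standard fact that a continuous, separately holomorphic function of several complex variables is holomorphic.
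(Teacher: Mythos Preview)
The paper does not actually prove this proposition: immediately after the statement it says ``We shall not prove this here and refer the reader to \cite[Proposition 2.5]{HKM3} for a proof.'' So there is nothing in the present paper to compare against.

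Evaluated on its own, your argument is correct and is essentially the standard one in free analysis (this is the device underlying the Kalyuzhnyi-Verbovetski\u\i{}--Vinnikov difference--differential calculus \cite{KVV}, and is the line of reasoning in \cite{HKM3}). The three ingredients --- the upper-triangular shape of $f(Z)$ via the column/row intertwiners, homogeneity of the off-diagonal block via conjugation by $\operatorname{diag}(I_n,tI_n)$, and the shear identity $\Phi(X+tH,X)[tH]=f(X+tH)-f(X)$ coming from conjugating $\operatorname{diag}(X+tH,X)$ by $\begin{bmatrix}I&I\\0&I\end{bmatrix}$ --- are exactly what is needed, and your identification of $\Phi(X,X)[H]$ with the directional derivative via continuity of $f[2n]$ at $\begin{bmatrix}X&H\\0&X\end{bmatrix}$ is clean. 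Your rescaling remark handling large $H$ is also correct: existence of the directional derivative in direction $H$ is equivalent to existence in direction $\lambda H$ for any $\lambda\ne0$, so one may always shrink $H$ first. Finally, invoking Osgood's lemma (continuous plus separately holomorphic implies holomorphic) is appropriate; one could also cite Hartogs, but continuity is available anyway. The domain caveat you flag --- that the non-unitary similarities used do not preserve $\cU(2n)$, so one must use openness to stay inside for small parameters --- is precisely the delicate point, and you have handled it correctly.
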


We shall not prove this here and refer the reader to \cite[Proposition 2.5]{HKM3} for a proof. The equation \eqref{eq:propcontanal} 
appearing in item (2) will be greatly
expanded upon in \S \ref{sec:powerseries} immediately below, where we 
explain how every \cnca admits a convergent power series expansion.

\subsection{Analytic free maps have a power series expansion}\label{sec:powerseries}

 It is shown in \cite[Section 13]{Vo2}
 that a \cnca
 $f$ has a formal power series expansion
 in the noncommuting variables, which indeed is a powerful way
 to think of \cncasp  Voiculescu also gives elegant
 formulas for the coefficients of the power series expansion of $f$ in terms
 of clever evaluations of $f$. Convergence properties for bounded
\cncas are studied in \cite[Sections 14-16]{Vo2}; see also
\cite[Section 17]{Vo2} for a bad unbounded example. 
Also,
  Kalyuzhnyi-Verbovetski\u\i{} and Vinnikov \cite{KVV} are developing
 general results based on very weak hypotheses with the conclusion that
 $f$ has a power series expansion and is thus a \cncap
An early study of noncommutative mappings is given
in \cite{Taylor}; see also \cite{Vo1}.

 Given a positive integer $\tg$, 
 a {\bf formal power series} $F$
 in the variables $x=\{x_1,\dots,x_g\}$
 with coefficients in $\mathbb C^{\tg}$
 is an expression of the form
\[
   F=\sum_{w\in\cFg} F_w w
\]
 where the $F_w\in \mathbb C^{\tg}$, and
 $\cFg$ is the free monoid on $x$, i.e., the set
 of all words in the noncommuting variables $x$.
 (More generally, the $F_w$ could be chosen
  to be operators between two Hilbert spaces.
  With the choice of $F_w\in\mathbb C^{\tg}$ and with some
  mild additional hypothesis,
  the power series $F$ determines a
  free map from some noncommutative
  $\varepsilon$-neighborhood of $0$ in
  $M(\C)^g$ into $M(\C)^{\tg}$.)

 Letting $F^{(m)}=\sum_{|w|=m} F_w w$ denote the
 {\bf homogeneous of degree $m$ part}
 of $F$,  \index{homogeneous of degree $m$ part}
\begin{equation}
 \label{eq:formal-pow}
  F=\sum_{m=0}^\infty \sum_{|w|=m} F_w w =\sum_m F^{(m)}.
\end{equation}

\begin{proposition}
 \label{prop:pow-nc-map}
 Let $\cV$ denote a noncommutative domain in
 $\matg$ which contains some
  $\varepsilon$-neighborhood of the
  origin, $\cN_\varepsilon$.
  Suppose
 $f=(f[n])$ is a sequence
 of analytic functions $f[n]:\cV(n)\to \matnh$.
  If there is a formal power series
 $F$ such that for $X\in\cN_\varepsilon$ the series
$
  F(X)=\sum_m F^{(m)}(X)
$
 converges in norm to $f(X)$,
 then $f$ is a \cnca  $\cV\to\math$.
\end{proposition}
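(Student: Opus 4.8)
The plan is to verify the two structural conditions in the definition of a free map — compatibility with direct sums and with similarities — since analyticity of each $f[n]$ is part of the hypothesis. By Proposition \ref{prop:nc-map-alt} it is equivalent, and more convenient, to show that $f$ respects intertwining maps. The only role of the power series is the following formal fact: each word $w$ satisfies $w(X\oplus Y)=w(X)\oplus w(Y)$ and, whenever $X\Gamma=\Gamma Y$, also $w(X)\Gamma=\Gamma w(Y)$; multiplying by the coefficients $F_w\in\C^{\tg}$ and summing the norm‑convergent series $\sum_m F^{(m)}$ shows that on $\cN_\varepsilon$ the map $f$ — which there equals $F$ — respects direct sums and every intertwining. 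The entire task is to propagate these identities from $\cN_\varepsilon$ to all of $\cV$, and the tool is the identity theorem for analytic functions on connected sets.

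For direct sums, fix $n,m$. Since $\cV$ is closed under direct sums, $X\oplus Y\in\cV(n+m)$ for all $X\in\cV(n)$ and $Y\in\cV(m)$, so
\[
  \Phi(X,Y):=f(X\oplus Y)-\bigl(f(X)\oplus f(Y)\bigr)
\]
is a well‑defined analytic map on $\cV(n)\times\cV(m)$. It vanishes on the nonempty open set $\cN_\varepsilon(n)\times\cN_\varepsilon(m)$ by the previous paragraph, and $\cV(n)\times\cV(m)$ is connected (a product of connected open sets), whence $\Phi\equiv0$.

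For intertwinings, fix $n,m$ and consider
\[
  \cW=\bigl\{(X,Y,\Gamma)\in\cV(n)\times\cV(m)\times M_{n\times m}(\C):X\Gamma=\Gamma Y\bigr\},
\]
an analytic subset of $\cV(n)\times\cV(m)\times M_{n\times m}(\C)$. This set is connected: scaling $\Gamma\mapsto t\Gamma$, $t\in[0,1]$, preserves the relation $X\Gamma=\Gamma Y$ and joins any $(X,Y,\Gamma)\in\cW$ to $(X,Y,0)$, while the slice $\cV(n)\times\cV(m)\times\{0\}$ is connected. The map $\Xi(X,Y,\Gamma):=f(X)\Gamma-\Gamma f(Y)$ is analytic on $\cW$ and vanishes on $\cW\cap\bigl(\cN_\varepsilon(n)\times\cN_\varepsilon(m)\times M_{n\times m}(\C)\bigr)$, a nonempty relatively open subset of $\cW$; the identity theorem applied on $\cW$ then gives $\Xi\equiv0$, i.e.\ $f$ respects intertwining maps, and Proposition \ref{prop:nc-map-alt} completes the argument. (If one prefers to handle similarities directly, the polar decomposition $\Gamma=UP$ reduces, via the unitary case — treated just like the direct‑sum step but on the connected set $\cV(n)\times U(n)$, using that both $\cV$ and $\cN_\varepsilon$ are unitarily invariant — to the case of positive definite $\Gamma=P$.)

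The delicate point, and the main obstacle, is the last invocation of the identity theorem. Unlike $\cV(n)\times\cV(m)$, the set $\cW$ is in general a reducible, singular analytic set — the spaces of intertwiners $\{\Gamma:X\Gamma=\Gamma Y\}$ jump in dimension over $\cV(n)\times\cV(m)$ — so vanishing of $\Xi$ on one relatively open piece need not, a priori, spread to every irreducible component. The substance of the proof is therefore checking that $\cW\cap(\cN_\varepsilon(n)\times\cN_\varepsilon(m)\times M_{n\times m}(\C))$ meets every irreducible component of $\cW$; this is where the geometry of $\cV$ enters — convexity of $\cN_\varepsilon$, closure of $\cV$ under unitary similarity, and the freedom to rescale $\Gamma$. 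An alternative for the positive definite case is a one‑variable continuation along the entire family $P^{\zeta}$, $\zeta\in\C$, exploiting that $P^{\,it}$ is unitary, so that the conjugates $P^{-\zeta}XP^{\zeta}$ of a fixed $X\in\cV(n)$ stay in $\cV$ for $\zeta$ on the imaginary axis, and continuing from there to $\Gamma=P=P^{1}$.
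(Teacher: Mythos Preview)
Your direct-sum argument is correct, and the same scheme would also establish unitary similarity (fix a unitary $U$, use that both $\cV(n)$ and $\cN_\varepsilon(n)$ are $U$-invariant, and apply the identity theorem on the connected open set $\cV(n)$). The gap is the one you identify yourself and do not close: the intertwining variety $\cW$ is reducible, and your path to connectivity runs through the trivial component $\cV(n)\times\cV(m)\times\{0\}$, on which $\Xi$ vanishes tautologically, so nothing propagates to the other components. Your suggested fixes hit the same wall. Fixing $\Gamma$ and working on the linear slice $\{(A,B):A\Gamma=\Gamma B\}\cap(\cV(n)\times\cV(m))$ requires that the given $(X,Y)$ lie in the connected component of $(0,0)$ in that slice; since $\cV$ is not assumed star-shaped, the segment from $(X,Y)$ to $(0,0)$ may leave $\cV$. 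The $P^\zeta$ idea has the same defect: the set $\{\zeta\in\C:P^{-\zeta}XP^{\zeta}\in\cV(n)\}$ contains the imaginary axis and the point $\zeta=1$, but nothing forces them to lie in the same component.

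The paper avoids the variety altogether. Given $X,Y,\Gamma$ with $X\Gamma=\Gamma Y$, it introduces the similarities $S_j=\bigl(\begin{smallmatrix}I & \frac1j\Gamma\\0&I\end{smallmatrix}\bigr)$ and the \emph{open} sets $\cW_j=\{(A,B):S_j^{-1}(A\oplus B)S_j\in\cV(n+m)\}\subset\cV(n)\times\cV(m)$. These are nested, exhaust $\cV(n)\times\cV(m)$ as $j\to\infty$, and by the elementary Lemma~\ref{lem:funnyconnected} so do their components $\cW_j^o$ through $(0,0)$; hence $(X,Y)\in\cW_j^o$ for some $j$. On this honest connected open set the two analytic functions $G(A,B)=S_j^{-1}(f(A)\oplus f(B))S_j$ and $H(A,B)=f\bigl(S_j^{-1}(A\oplus B)S_j\bigr)$ agree near $(0,0)$ (where $f=F$ is free), hence everywhere, and evaluation at $(X,Y)$ yields $f(X)\Gamma=\Gamma f(Y)$. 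The key idea you are missing is that scaling $\Gamma$ down and using openness of $\cV(n+m)$ trades the singular variety for an exhausting family of open domains on which the identity theorem is unproblematic.
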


 The following lemma will be used in the proof 
 of Proposition \ref{prop:pow-nc-map}.

\begin{lemma}
 \label{lem:funnyconnected}
   Suppose $W$ is an open connected subset of
   a locally connected metric space $X$ and $o\in W$. Suppose
   $o\in W_1 \subset W_2 \subset \cdots$ is 
   a nested increasing sequence of open 
   subsets of $W$ and let $W_j^o$ denote the
   connected component of $W_j$ containing $o$.
   If $\cup W_j = W$, then $\cup W_j^o=W$. 
\end{lemma}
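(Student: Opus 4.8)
The plan is to show that $W \setminus \bigcup W_j^o$ is both open and closed in $W$, and hence, since $W$ is connected and the set is not all of $W$ (it omits $o$), it must be empty. Write $A = \bigcup_j W_j^o$; we want $A = W$.

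First I would check that $A$ is open in $W$: each $W_j^o$ is a connected component of the \emph{open} set $W_j$, and in a locally connected space the connected components of open sets are open, so every $W_j^o$ is open and therefore so is their union $A$. Next I would argue that each $W_j^o$ is relatively closed in $\bigcup_{k} W_k^o$; more usefully, I would show $A$ is closed in $W$. Suppose $p \in W$ lies in the closure of $A$. Since $\bigcup_k W_k = W$, there is some index $J$ with $p \in W_J$; since $W_J$ is open and $p$ is a limit point of $A$, the open neighborhood $W_J$ of $p$ meets $A$, say it meets $W_j^o$ for some $j$. Now take $N = \max\{J, j\}$; by the nesting, $W_J \subseteq W_N$ and $W_j^o \subseteq W_N$. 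The set $W_J \cup W_j^o$ is contained in $W_N$; it is connected because $W_J$ is connected (it is the neighborhood of $p$; wait — $W_J$ need not be connected). Let me instead use: let $C$ be the connected component of $W_J$ containing $p$. Then $C$ is open (local connectedness again) and meets $A$. Hmm, I still need to connect $C$ to $o$.

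Here is the cleaner route, which I expect to be the crux. Claim: for any two indices $i \le j$, if $W_i^o$ and a connected component $C$ of $W_j$ have nonempty intersection, then $C = W_j^o$ — indeed $C \cup W_i^o$ is a connected subset of $W_j$ (connected since the two connected pieces overlap) containing $o$ (as $o \in W_i^o$), hence lies in the component $W_j^o$ of $W_j$ through $o$; but $C$ is itself a maximal connected subset of $W_j$, so $C \subseteq W_j^o$ forces $C = W_j^o$. With this in hand, closedness of $A$ follows: if $p \in \overline{A} \cap W$, pick $J$ with $p \in W_J$ and let $C$ be the component of $W_J$ containing $p$; $C$ is open, so $C \cap A \neq \emptyset$, say $C \cap W_i^o \neq \emptyset$. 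Setting $j = \max\{J, i\}$ and using nesting, both $C$ and $W_i^o$ sit inside $W_j$; the component of $W_j$ containing $C$ contains a point of $W_i^o$, so by the claim it equals $W_j^o$, whence $p \in C \subseteq W_j^o \subseteq A$. Thus $A$ is closed in $W$.

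Finally, $A$ is nonempty (it contains $o$, since $o \in W_1$ forces $o \in W_1^o \subseteq A$), open in $W$, and closed in $W$; connectedness of $W$ gives $A = W$, which is exactly $\bigcup_j W_j^o = W$. The main obstacle is purely bookkeeping: one must be careful that the neighborhoods $W_J$ furnished by $\bigcup_k W_k = W$ are not assumed connected, so the local connectedness of $X$ must be invoked to pass to a connected component of $W_J$ before comparing with the $W_i^o$; once the overlap-implies-equal-component claim is isolated, the topology is routine.
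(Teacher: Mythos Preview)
Your argument is correct and is essentially the paper's proof: both show $A=\bigcup_j W_j^o$ is clopen in $W$ by taking a point on the boundary of $A$, choosing a connected open neighborhood of it inside some $W_N$ (the paper uses a small connected ball, you use the component of $W_J$ through $p$), observing this neighborhood meets some $W_i^o$, and then concluding via the overlapping--connected--sets trick that the point already lies in $W_j^o$ for $j$ large. Your presentation is a bit tidier in that you isolate the ``overlap implies same component'' claim and invoke the standard characterization of local connectedness (components of open sets are open) rather than assuming small metric balls are connected, but the underlying idea is the same.
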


\begin{proof}
  Let $U=\cup W_j^o$.  If $U$ is a proper subset of $W$, 
  then $V=W\setminus U$ is neither empty nor open. 
  Hence, there is a $v\in V$ 
  such that $N_\delta(v)\cap U\ne \emptyset$ for every $\delta>0$. 
  Here $N_\delta(v)$ is the $\delta$ neighborhood
  of $o$. 

  There is an $N$ so that if $n\ge N$, then
  $v\in W_n$.  There is a $\delta>0$ such that
  $N_\delta(v)$ is connected, and $N_\delta(v)\subset W_n$
  for all $n\ge N$.  There is an $M$ so that 
  $N_\delta(v)\cap W_m^o\ne \emptyset$ for all $m\ge M$.
  In particular, since both $N_\delta(v)$ and $W_m^o$
  are connected, $N_\delta(v)\cup W_m^o$ is connected. 
  Hence, for $n$ large enough, 
  $N_\delta(v) \cup W_m^o$ is both connected
  and a subset of $W_m$.  This gives the contradiction
  $N_\delta(v)\subset W_m^o$.
\end{proof}

\begin{proof}[Proof of Proposition {\rm\ref{prop:pow-nc-map}}]
  For notational convenience, let $\cN=\cN_\varepsilon$. 
 For each $n$, the formal power series $F$ determines an analytic
 function $\cN(n) \to \matnh$ which
 agrees with $f[n]$ (on $\cN(n)$).  Moreover, if
 $X\in\cN(n)$ and $Y\in\cN(m)$, and $X\Gamma=\Gamma Y$,
 then $F(X)\Gamma =\Gamma F(Y)$. Hence
 $f[n](X)\Gamma = \Gamma f[m](Y)$.

 Fix $X\in\cV(n)$, $Y\in\cV(m)$, and suppose
  there exists  $\Gamma\neq 0$ such that
  $X\Gamma=\Gamma Y$. 
 For each positive  integer $j$ let 
\[
  \cW_j =\big\{(A,B)\in \cV(n)\times \cV(m):
   \begin{bmatrix} I & - \frac{1}{j} \Gamma \\ 0 & I \end{bmatrix}
   \begin{bmatrix} A & 0 \\ 0 & B \end{bmatrix}
   \begin{bmatrix} I & \frac{1}{j} \Gamma \\ 0 & I \end{bmatrix}
     \in \cV(n+m)\big\} \subset \cV(n)\oplus \cV(m).
\]
 Note that   $\cW_j$ is open since $\cV(n+m)$ is.
 Further,  $\cW_j \subset \cW_{j+1}$ for each $j;$ 
  for $j$ large enough, 
  $(0,0)\in \cW_j;$ and   $\cup  \cW_j = W:=\cV(n)\oplus \cV(m)$.
 By Lemma \ref{lem:funnyconnected}, $\cup W_j^o =W$, where
  $\cW_j^o$ is the connected component of 
  $\cW_j$ containing $(0,0)$.  Hence, $(X,Y)\in \cW_j^o$ 
  for large enough $j$ which we now fix. 
  Let $\cY\subset \cW_j$ be a connected neighborhood of $(0,0)$
  with $\cY\subset \cN(n)\oplus \cN(m)$.

 We have analytic functions
  $G,H:\cW_j^o\to M_{m+n}(\mathbb C^g)$ defined by
\[
 \begin{split}
     G(A,B)= &
    \begin{bmatrix} I & -\frac{1}{j}\Gamma \\ 0 & I \end{bmatrix}
    \begin{bmatrix} f(n)(A) & 0 \\ 0 & f(m)(B)\end{bmatrix}
   \begin{bmatrix} I & \frac{1}{j} \Gamma \\ 0 &  I \end{bmatrix} \\
   H(A,B) = & f(n+m)( \begin{bmatrix} I & - \frac{1}{j}
            \Gamma \\ 0 & I \end{bmatrix}
   \begin{bmatrix} A & 0 \\ 0 & B \end{bmatrix}
   \begin{bmatrix} I & \frac{1}{j} \Gamma \\ 0 & I \end{bmatrix}).
 \end{split}
\]
  For $(A,B)\in \cY$ we have $G(A,B)=H(A,B)$ from above.
  By analyticity and the connectedness of
  $\cW_j^o$, this shows $G(A,B)=H(A,B)$
  on $\cW_j^o$. 

  Since $(X,Y)\in \cW_j^o$ we obtain  
  the equality $G(X,Y)=H(X,Y),$ which  gives, using $X\Gamma -\Gamma Y=0$,
\[
  f\begin{bmatrix} X & 0 \\ 0 & Y \end{bmatrix}
  = \begin{bmatrix} f(X) & \frac{1}{j} (f(X)\Gamma - \Gamma f(Y)) \\ 
   0 & f(Y) \end{bmatrix}.
\]
  Thus $f(X)\Gamma-\Gamma f(Y)=0$ and 
  we conclude that $f$ respects intertwinings
  and hence is a \ncap
\end{proof}

 If $\cV$ is a noncommutative set, a \nca 
 $f:\cV\to \math$ is 
  {\bf uniformly bounded}\index{uniformly bounded, noncommutative function}\index{noncommutative function, uniformly bounded}
  provided
 there is a $C$ such that $\|f(X)\|\le C$ for every $n\in\N$ and $X\in\cV(n).$

\begin{proposition}
 \label{prop:rep-nc-map}
  If $f:\cN_\varepsilon \to\math$ is a \cnca
  then there is a formal power series
\beq
  F=\sum_{w\in\cFg} F_w w =\sum_{m=0}^\infty \sum_{|w|=m} F_w w
\eeq
 which converges on $\cN_\varepsilon$ and such that
 that $F(X)=f(X)$ for $X\in\cN_\varepsilon.$

 Moreover, if $f$ is uniformly bounded by $C$, then
 the power series converges uniformly in the sense
 that
for each $m$, $0\le r<1$,  and tuple $T=(T_1,\dots,T_g)$ of operators
 on Hilbert space satisfying $
  \sum T_j T_j^* \prec r^2 \varepsilon^2 I,$ 
we have
\[
  \Big\| \sum_{|w|=m} F_w \otimes T^w \Big\| \le C r^m.
\]
 In particular, $\|F_w\|\le\frac{C}{\varepsilon^n}$ for each word $w$
 of length $n$. 
 \end{proposition}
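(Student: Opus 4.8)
The plan is to obtain the coefficients $F_w$ by evaluating $f$ on nilpotent tuples based at the origin, to show the resulting series represents $f$ throughout $\cN_\varepsilon$ by a one complex variable ``radial'' argument, and then to read the quantitative bounds off Cauchy's inequality applied on circles. For a word $w=x_{i_1}\cdots x_{i_m}$ of length $m$, let $N^{(w)}=(N^{(w)}_1,\dots,N^{(w)}_g)$ be the tuple of $(m+1)\times(m+1)$ matrices whose only nonzero entries lie on the first superdiagonal, with $(N^{(w)}_j)_{\ell,\ell+1}=1$ exactly when $i_\ell=j$; then the $(1,m+1)$ entry of $v(N^{(w)})$ is $1$ if $v=w$ and $0$ otherwise, and every product of more than $m$ of the $N^{(w)}_j$ vanishes. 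For $0<s<\varepsilon$ we have $sN^{(w)}\in\cN_\varepsilon(m+1)$, and I set $F_w\in\C^{\tilde g}$ to be $s^{-m}$ times the $(1,m+1)$ entry of $f(sN^{(w)})$. Since a free map applied to a block strictly upper triangular tuple is again block upper triangular, with corners given by the iterated difference quotients of $f$ at the diagonal blocks — the staircase form of \eqref{eq:propcontanal}, obtained by iterating it — the $(1,m+1)$ entry of $f(sN^{(w)})$ is exactly $s^m F_w$, so $F_w$ is independent of $s$. This yields a formal power series $F=\sum_w F_w w$ with homogeneous part $F^{(m)}(X)=\sum_{|w|=m}F_w\otimes X^w$. (Alternatively, existence of the formal expansion may simply be quoted from \cite[\S13]{Vo2} or \cite{KVV}; what follows is the convergence and the estimates.)

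Next I would establish convergence and the identity $F(X)=f(X)$ for $X\in\cN_\varepsilon$. Fix $n$ and $X\in\cN_\varepsilon(n)$. Since $\sum_j X_jX_j^\ast\prec\varepsilon^2 I_n$ there is $r\in(0,1)$ with $\sum_j X_jX_j^\ast\preceq r^2\varepsilon^2 I_n$, hence $zX\in\cN_\varepsilon(n)$ for all $|z|<1/r$, so $g(z):=f(zX)$ is analytic on the disc of radius $1/r$ and equals its Taylor series $\sum_{m\ge0}z^m\Phi_m(X)$, which converges absolutely at $z=1$. Represented by the Cauchy integral $\Phi_m(Y)=\frac{1}{2\pi i}\oint_{|z|=1/2}z^{-m-1}f(zY)\,dz$, the map $\Phi_m$ is itself an analytic free map — it inherits respect for intertwinings and direct sums from $f$ — and is homogeneous of degree $m$. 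The crux is that a homogeneous degree $m$ analytic free map on $\cN_\varepsilon$ is literally a homogeneous noncommutative polynomial $Y\mapsto\sum_{|w|=m}c_w\otimes Y^w$, and that the $c_w$ are recovered by precisely the nilpotent evaluation above, so $c_w=F_w$. Granting this, $f(X)=g(1)=\sum_m\Phi_m(X)=\sum_m\sum_{|w|=m}F_w\otimes X^w=F(X)$, which in particular proves convergence of $F$ on all of $\cN_\varepsilon$.

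For the uniformly bounded case, suppose $\|f(X)\|\le C$ for all $X$. Let $T=(T_1,\dots,T_g)$ act on a Hilbert space with $\sum_j T_jT_j^\ast\prec r^2\varepsilon^2 I$, $0\le r<1$; a standard compression argument reduces us to $T$ acting on $\C^N$. Then $zT\in\cN_\varepsilon(N)$ for all $|z|<1/r$, the function $z\mapsto f(zT)$ is analytic and norm bounded by $C$ there, and by the previous paragraph its $m$-th Taylor coefficient is $\sum_{|w|=m}F_w\otimes T^w$; Cauchy's estimate on $|z|=\rho$ with $\rho\uparrow 1/r$ gives $\|\sum_{|w|=m}F_w\otimes T^w\|\le Cr^m$. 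Finally, taking $T=\varepsilon' S$ with $S$ the tuple of truncated left creation operators on $\bigoplus_{\ell=0}^{n}(\C^g)^{\otimes\ell}$ — so $\sum_j S_jS_j^\ast$ is a projection and $\sum_j T_jT_j^\ast=\varepsilon'^2\sum_jS_jS_j^\ast\preceq\varepsilon'^2 I$ — and pairing with the vacuum vector and the orthonormal level $n$ tensors gives $\varepsilon'^{\,n}\|F_w\|\le\|\sum_{|v|=n}F_v\otimes T^v\|\le Cr^n$; letting $\varepsilon'\uparrow r\varepsilon$ and then $r\uparrow1$ yields $\|F_w\|\le C/\varepsilon^n$.

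The main obstacle is the identification invoked in the middle paragraph: that a homogeneous degree $m$ analytic free map on $\cN_\varepsilon$ is exactly a homogeneous noncommutative polynomial, with coefficients given by the nilpotent evaluations. I would prove this by induction on $m$ from the staircase form of \eqref{eq:propcontanal}: evaluating such a map on an $(m+1)$-step block nilpotent tuple whose only nonzero blocks are $H^{(1)},\dots,H^{(m)}$ on the first superdiagonal exhibits its top-right corner as a fixed multilinear form in $H^{(1)},\dots,H^{(m)}$ whose coefficients are the $F_w$; then a connectedness and direct sum argument, together with homogeneity and the fact that the map respects intertwinings, forces agreement with $X\mapsto\sum_{|w|=m}F_w\otimes X^w$ on all of $\cN_\varepsilon$. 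Verifying the iterated form of \eqref{eq:propcontanal} and keeping straight which words a given staircase produces is the one genuinely bookkeeping-heavy step; everything else is one complex variable analysis together with the Cauchy estimate.
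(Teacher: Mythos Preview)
Your overall architecture matches the paper's: decompose $f(zX)$ into homogeneous pieces $\Phi_m$ via the one-variable Taylor series, identify each $\Phi_m$ with a homogeneous nc polynomial, and then extract the quantitative bounds by Cauchy estimates, with the individual $\|F_w\|$ bound coming from evaluation on truncated creation operators.  The difference lies entirely in the ``crux'' step, the identification of a continuous homogeneous free map with an nc polynomial (the paper's Lemma~\ref{lem:degl}).

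You propose to do this via the iterated staircase form of \eqref{eq:propcontanal} and specific word-nilpotents $N^{(w)}$, in the spirit of \cite{Vo2,KVV}.  The paper instead goes through dilation theory: it proves a de~Branges--Rovnyak style dilation (Proposition~\ref{thm:dilate}) of any nilpotent row contraction to the compressed creation operators $T=V_\ell^*SV_\ell$, shows $f(T)$ is literally a polynomial in $T$ (Lemma~\ref{lem:fF}) by exploiting that right-multiplication operators $R_u$ commute with $T$, and then transfers this to $f(Y)$ for nilpotent $Y$ via the intertwining $VY^*=(I\otimes T^*)V$ (Lemma~\ref{lem:creation-vs-ncmap}).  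The passage from nilpotents to general $X$ is accomplished not by ``connectedness and direct sums'' but by a specific trick: tensor $X$ with a Jordan block $J$ to get a nilpotent $Y=X\otimes J$, compute $f(Y)$ two ways (once via dilation, once by approximating $J$ by diagonalizable $E=D+J$ and using similarity plus homogeneity plus continuity), and compare.

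Your route is legitimate, but the phrase ``a connectedness and direct sum argument \dots\ forces agreement'' is exactly where the content hides.  The staircase computation only pins down $\Phi_m$ on block-nilpotent inputs; nilpotents are neither open nor dense, so some device is still needed to reach general $X$.  The paper's $X\otimes J$ maneuver is one such device, and it (or an equivalent polarization/density-of-diagonalizables argument) is what your sketch would have to supply.  What your approach buys is that it stays close to Taylor--Taylor calculus and avoids setting up the Fock space and the dilation lemma; what the paper's approach buys is that once dilation is in hand, the polynomial identity drops out with essentially no bookkeeping on words.
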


\begin{remark}\rm
  Taking advantage of polynomial identities for $M_n(\C)$,
  the article \cite{Vo2} gives an example of a formal
  power series $G$ which converges for every tuple
  $X$ of matrices, but has $0$ radius of convergence
  in the sense that for every $r>0$ there exists
  a tuple of operators $X=(X_1,\cdots,X_g)$ with
  $\sum X_j^*X_j < r^2$ for which $G(X)$ fails
  to converge. 
\end{remark}

\subsection{The Fock space}
 \label{appendix}
   We now start proving Proposition
  \ref{prop:rep-nc-map}.

\subsection{The creation operators}
 \label{sec:creat}
 The {\bf noncommutative Fock space}\index{Fock space},
 denoted $\cFgtwo,$ is the Hilbert space with
 orthonormal basis $\cFg$.  For $1\le j\le g$,
 the operators $S_j:\cFgtwo\to \cFgtwo$ determined
 by $S_j w=x_j w$ for words $w\in\cFg$ are
 called the {\bf creation operators}\index{creation operators}.
 It is readily checked that each $S_j$ is an isometry and
\[
  I-P_0 =\sum S_j S_j^*,
\]
 where $P_0$ is the projection onto the one-dimensional
 subspace of $\cFgtwo$ spanned by the empty word $\emptyset$.
 As is well known \cite{Fr,Po0},
the creation operators serve as a universal model
 for row contractions. We
  state a precise
 version of this result suitable for our purposes
 as Proposition \ref{thm:dilate} below.

 Fix a positive integer $\ell$.
 A tuple $X\in \matng$ is {\it nilpotent of order $\ell+1$}
 if $X^{w}=0$ for any word $w$ of length $|w|>\ell$.
 Let $\cP_\ell$ denote
 the subspace of $\cFgtwo$ spanned by words of length
 at most $\ell$; $\cP_\ell$ has dimension
 $$\sigma(\ell)=\sum_{j=0}^\ell g^j.$$  Let $V_\ell:\cP_\ell\to \cFgtwo$
  denote the inclusion mapping and let
\[
  V_\ell^* S V_\ell=V_\ell^*(S_1,\dots,S_g)V_\ell
   =(V_\ell^* S_1 V_\ell,\dots, V_\ell^* S_gV_\ell).
\]
  As is easily verified,
 the subspace $\cP_\ell$ is invariant for each $S_j^*$
 (and thus semi-invariant (i.e., the orthogonal difference of two invariant
  subspaces) for $S_j$).  Hence, for a   polynomial $p\in\bCx$,
\[
  p(V_\ell^*SV_\ell)=V_\ell^*p(S)V_\ell.
\]
  In particular,
\[
  \sum_j (V_\ell^* S_jV_\ell) ( V_\ell^* S_j^* V_\ell)
    \prec V_\ell^* \sum_j S_j S_j^* V_\ell = V_\ell^* P_0 V_\ell.
\]
 Hence, if $|z|<\varepsilon$, then $V_\ell^* zS V_\ell$ is
 in $\cN_\varepsilon$, the $\varepsilon$-neighborhood of $0$
  in $\matg$.

 The following is a well known 
 algebraic version of a classical dilation theorem.
 The proof here follows along the lines of the 
 de Branges-Rovnyak construction of the coisometric dilation of
 a contraction operator on Hilbert space \cite{RR}.

\begin{proposition}
 \label{thm:dilate}
 Fix a positive integer $\ell$ and let $T=V_\ell^* S V_\ell$.
  If  $X\in\matng$ is nilpotent of order $\ell$ and if
$
  \sum X_j X_j^*  \prec  r^2 I_n
$
  then there is an isometry $V:\mathbb  C^n \to \mathbb C^n\otimes \cP_\ell$
  such that
$
   V X_j^* = r (I\otimes T_j^*) V,
$
  where $I$ is the identity on $\mathbb C^n$.
\end{proposition}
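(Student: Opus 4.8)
My plan is to build the dilating isometry $V$ explicitly, following the de Branges--Rovnyak recipe for the coisometric dilation of a contraction; the key simplification is that the nilpotency hypothesis on $X$ forces every series that would normally occur to terminate, so the whole argument stays finite dimensional and purely algebraic. First I would record how the model operator acts: since $T_j^\ast=V_\ell^\ast S_j^\ast V_\ell$, on the orthonormal basis of $\cP_\ell$ one has $T_j^\ast w=w'$ when $w=x_jw'$ and $T_j^\ast w=0$ otherwise (no recompression to $\cP_\ell$ is needed, as $|w'|=|w|-1\le\ell-1$). Identifying $\C^n\otimes\cP_\ell$ with $\bigoplus_{|w|\le\ell}\C^n$, I would then look for $V$ of the form $V\xi=\sum_{|w|\le\ell}(c_w\xi)\otimes w$ with matrix coefficients $c_w\in M_n(\C)$, and translate the two demands $V^\ast V=I_n$ and $VX_j^\ast=r\,(I\otimes T_j^\ast)V$ into relations among the $c_w$.

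Matching the coefficient of each word, the intertwining relation becomes the recursion $c_wX_j^\ast=r\,c_{x_jw}$ for $|w|\le\ell-1$ together with the boundary conditions $c_wX_j^\ast=0$ for $|w|=\ell$. Iterating the recursion yields
\[
  c_w=r^{-|w|}\,c_\emptyset\,(X^w)^\ast ,
\]
and since $X$ is nilpotent of order $\ell$ we have $X^w=0$ whenever $|w|\ge\ell$; hence $c_w=0$ for $|w|=\ell$ and the boundary conditions hold automatically (so in fact $V$ lands in $\C^n\otimes\cP_{\ell-1}$). Only the choice of $c_\emptyset$ remains. Setting $P:=c_\emptyset^\ast c_\emptyset$ and $\Phi(Q):=r^{-2}\sum_j X_jQX_j^\ast$, a direct computation gives $\sum_{|w|\le\ell}c_w^\ast c_w=\sum_{k\ge0}\Phi^k(P)$; by nilpotency $\Phi^\ell=0$, so $\sum_{k\ge0}\Phi^k=(I-\Phi)^{-1}$ as a linear map on $M_n(\C)$, and the isometry condition $V^\ast V=I_n$ is equivalent to
\[
  P=(I-\Phi)(I_n)=I_n-r^{-2}\textstyle\sum_j X_jX_j^\ast .
\]

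Now the hypothesis $\sum_j X_jX_j^\ast\prec r^2 I_n$ says exactly that this $P$ is positive definite, so I may take $c_\emptyset:=P^{1/2}$, define $c_w:=r^{-|w|}P^{1/2}(X^w)^\ast$ for $|w|\le\ell$, and set $V\xi:=\sum_{|w|\le\ell}(c_w\xi)\otimes w$. It then remains to verify, straight from these formulas, that $V^\ast V=\sum_w c_w^\ast c_w=\sum_{k=0}^{\ell-1}\Phi^k(P)=(I-\Phi)^{-1}(P)=I_n$, so $V$ is an isometry, and that $VX_j^\ast=r(I\otimes T_j^\ast)V$, which is immediate from $c_wX_j^\ast=r\,c_{x_jw}$ and $c_w=0$ for $|w|=\ell$.

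The only real idea is recognizing that the \emph{defect} $P=I_n-r^{-2}\sum_j X_jX_j^\ast$ is precisely what the isometry condition forces; after that everything is bookkeeping. The two hypotheses enter in exactly one place each: nilpotency makes the geometric series $\sum_k\Phi^k$ stop, keeping the construction finite dimensional and algebraic, and strict row-contractivity guarantees $P\succeq 0$, so a square root exists. Accordingly, I expect the main obstacle to be nothing deeper than keeping the word combinatorics straight---the order of the factors in $c_w=r^{-|w|}P^{1/2}(X^w)^\ast$ and the off-by-one between the nilpotency index $\ell$ and the spanning length of $\cP_\ell$.
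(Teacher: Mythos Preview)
Your proposal is correct and is essentially the paper's de Branges--Rovnyak construction: after scaling $r=1$ the paper sets $D=(I-\sum X_jX_j^\ast)^{1/2}$ (your $P^{1/2}$) and defines $V\gamma=\sum_w D(X^w)^\ast\gamma\otimes w$, then verifies the isometry via the same telescoping identity you package as $\sum_k\Phi^k(P)=I$. The only difference is expository---you \emph{derive} the coefficients $c_w$ from the intertwining relation whereas the paper writes $V$ down and checks---and your observation that $V$ actually lands in $\mathbb C^n\otimes\cP_{\ell-1}$ is a small sharpening.
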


\begin{proof}
We give a
de Branges-Rovnyak style proof 
 By scaling, assume that $r=1$.
  Let
\[
  R=\sum X_j X_j^*.
\]
 Thus, by hypothesis  $0\preceq R \prec I$.
  Let
\[
  D=(I-\sum T_jT_j^*)^\frac12.
\]
 The matrix $D$ is known as the {\bf defect} and, by hypothesis,  is
 strictly positive definite.  Moreover,
\begin{equation}
 \label{eq:isometry}
  \sum_{|w|\le \ell}  X^w D D (X^w)^* = I- \sum_{|w|=\ell+1}X^w (X^w)^* = I.
\end{equation}

 Define $V$ by
\[
  V\gamma = \sum_w D(T^w)^* \gamma \otimes w.
\]
 The equality of equation \eqref{eq:isometry}
 shows that $V$ is an isometry. Finally
\[
 \begin{split}
  VX_j^* \gamma & =  \sum_{|w|\le \ell-1} D(X^w)^* X_j^* \gamma\otimes w
    =  \sum_{|w|\le \ell-1} D (X^{x_jw})^* \gamma\otimes w \\
    & =  T_j^* \sum_{|w|\le \ell-1} D (X^{x_jw})^*\gamma \otimes x_j w \\
    & =  S_j^* \big(D\gamma +\sum_k \sum_{|w|\le \ell-1} D(T^{x_kw})^* \gamma
                \otimes x_k w  \big) \\
    &    =  S_j^* V\gamma. \qedhere
 \end{split}
\]
\end{proof}

\subsection{Creation operators meet \ncas}
 In this section we determine formulas for the coefficients $F_w$
 of Proposition \ref{prop:rep-nc-map} of
 the power series expansion of $f$ in terms of the
 creation operators  $S_j$. Formulas
 for the $F_w$ are also given in \cite[Section 13]{Vo2} and in \cite{KVV},
 where they are obtained by clever substitutions and have nice properties.
 Our formulas 
in terms of the familiar creation operators and
 related algebra  provide a slightly different perspective and  impose an organization which might
 prove interesting.

\begin{lemma}
 \label{lem:fF}
  Fix a positive integer $\ell$ and let $T= V_\ell^* SV_\ell$
  as before.
  If $f:\matg\to \math$ is a \ncac
  then  there exists, for
  each word $w$ of length at most $\ell$, a vector
  $F_w\in\mathbb C^{\h}$ such that
 \[
   f(T)=\sum_{|w|\le \ell} F_w \otimes T^w.
 \]
\end{lemma}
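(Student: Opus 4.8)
The plan is to exploit the fact that $T = V_\ell^* S V_\ell$ is nilpotent of order $\ell+1$ together with the direct-sum and similarity properties packaged in Proposition~\ref{prop:continuous-analytic}, especially the upper-triangular formula \eqref{eq:propcontanal}. The key structural observation is that $\cFgtwo$ decomposes as $\cP_\ell = \bigoplus_{j=0}^\ell H_j$, where $H_j$ is the span of words of length exactly $j$, and that in this grading each creation operator $S_j$ (hence each $T_j$) shifts $H_k$ into $H_{k+1}$. So $T$ is, after a choice of orthonormal basis, a tuple of strictly upper-triangular (in the block-graded sense) nilpotent matrices, and any word $T^w$ with $|w| = k$ maps $H_0$ into $H_k$ and kills $H_j$ for $j > \ell - k$.

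First I would set up an induction on $\ell$. The base case $\ell = 0$ is trivial: $\cP_0$ is one-dimensional, $T = 0$, and $f(T) = f(0) =: F_\emptyset \in \C^{\tg}$, interpreting the empty-word term as $F_\emptyset \otimes I_1$. For the inductive step, I would realize $\cP_\ell$ as a $2\times 2$ block space adapted to the grading — for instance $\cP_\ell = \cP_{\ell-1} \oplus H_\ell$ — and write each $T_j$ in the block form $\begin{bmatrix} T_j' & B_j \\ 0 & 0 \end{bmatrix}$, where $T' = V_{\ell-1}^* S V_{\ell-1}$ is the creation tuple on $\cP_{\ell-1}$ and the $B_j$ record the action of $S_j$ from length-$(\ell-1)$ words into length-$\ell$ words. (One must be slightly careful: the natural splitting that makes the lower-left block vanish is $H_\ell$ on the bottom, and then the restriction of $T_j$ to the top corner is unitarily equivalent to $V_{\ell-1}^* S_j V_{\ell-1}$ since $\cP_{\ell-1}$ is semi-invariant for the $S_j$.) Now apply \eqref{eq:propcontanal}: because $T$ has this upper-triangular block shape with the same diagonal block $T'$ repeated (after grouping), $f(T)$ has the form $\begin{bmatrix} f(T') & f'(T')[B] \\ 0 & * \end{bmatrix}$, and more precisely one gets that every entry of $f(T)$ is a fixed $\C^{\tg}$-linear combination of the entries of the $T^w$. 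Expanding $f'(T')[B]$ and the lower-diagonal terms, and invoking the inductive hypothesis $f(T') = \sum_{|w| \le \ell-1} F_w \otimes (T')^w$, yields new coefficients $F_w$ for $|w| = \ell$ coming from the derivative term, completing the expansion $f(T) = \sum_{|w| \le \ell} F_w \otimes T^w$.

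The main obstacle — and the place where some care is genuinely required — is bookkeeping the passage from \eqref{eq:propcontanal}, which handles a single off-diagonal block $H$, to a tuple $T$ that has off-diagonal pieces at every level of an $(\ell+1)$-fold grading, and checking that the resulting coefficients are \emph{$w$-indexed} (i.e.\ depend only on the word $w$, not on which matrix block one reads them off from). The cleanest way to do this is probably not a brute-force iteration of \eqref{eq:propcontanal} but rather to feed a ``generic'' nilpotent perturbation into $f$: consider $f$ evaluated at $T + \text{(formal off-diagonal terms)}$, use analyticity to extract the Taylor coefficients in those terms, and match against the direct-sum/similarity relations to force the coefficient of the $(i,j)$ block to depend only on the word labeling the unique directed path from $j$ to $i$ in the grading — this is exactly the content of the combinatorics of $T^w$. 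Once the indexing is pinned down, the existence of the $F_w$ is immediate, and uniqueness is automatic because the $T^w$ for $|w| \le \ell$ are linearly independent as operators on $\cP_\ell$ (they send $\emptyset \mapsto w$).
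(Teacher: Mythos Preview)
There is a genuine gap. Your plan hinges on formula~\eqref{eq:propcontanal}, but it cannot be invoked here for two independent reasons. First, Lemma~\ref{lem:fF} assumes only that $f$ is a free map, with no continuity hypothesis, so $f'(X)[H]$ is not even defined; the lemma is a building block for the power-series machinery, and your alternative of Taylor-expanding in ``formal off-diagonal terms'' has the same circularity. Second, even granting continuity, the block structure does not match: in the splitting $\cP_\ell=\cP_{\ell-1}\oplus H_\ell$ one computes
\[
  T_j=\begin{bmatrix} T_j' & 0 \\ C_j & 0\end{bmatrix}
\]
(since $T_j$ sends $H_{\ell-1}$ into $H_\ell$ and annihilates $H_\ell$), which is block \emph{lower} triangular with \emph{distinct} diagonal blocks $T_j'$ and $0$. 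Your claim that ``the same diagonal block $T'$ [is] repeated'' is false, so \eqref{eq:propcontanal} does not apply. The free-map axioms do force $f(T)$ to be block triangular with diagonal blocks $f(T')$ and $f(0)$, but they say nothing directly about the off-diagonal block, and that block is exactly where the new coefficients $F_w$ with $|w|=\ell$ must be read off. The induction stalls right there.

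The paper's argument sidesteps all of this with a purely algebraic device: the right-multiplication operators $R_u$ on $\cP_\ell$ commute with each $T_j$, and hence, by the intertwining property alone (no continuity needed), $f(T)R_u=R_u f(T)$ for every word $u$. This single commutation relation determines every matrix entry $\langle u, f(T)^* (\vy\otimes v)\rangle$ from the entries in the column indexed by $\emptyset$, and those entries are by definition the vectors $F_w$.
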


 Given $u,w\in\cFg$, we say {\bf $u$ divides $w$ (on the right)},
 \index{divides on the right} denoted $u | w$,
 if there is a $v\in\cFg$ such that $w=uv$.

\begin{proof}
  Fix a word $w$ of length at most $\ell$. Define $F_w\in\mathbb C^{\h}$
  by
\[
  \langle F_w,\vy\rangle
     = \langle \emptyset, f(T)^* \vy\otimes w\rangle,
\quad \vy\in\mathbb C^{\h}.
\]

  Given a word $u\in\cP_\ell$ of length $k$,
  let $R_u$ denote the operator of {\em right} multiplication
  by $u$ on $\cP_\ell$. Thus, $R_u$ is determined by
  $R_u v= vu$  if $v\in \cFg$ has length
  at most $\ell-k$, and $R_u v=0$ otherwise.
  Routine calculations show
 \[
    T_j R_u = R_u T_j.
 \]
  Hence, for the \nca $f$, $
    f(T) R_u = R_u f(T).
$
   Thus, for words $u,v$ of length at most $\ell$
   and $\vy\in\mathbb C^{\h}$,
 \[
   \langle u, f(T)^* \vy \otimes v \rangle
   = \langle R_u \emptyset, f(T)^* \vy \otimes v\rangle
   = \langle \emptyset, f(T)^* \vy \otimes R_u^* v \rangle.
 \]
  It follows that
\beq
 \label{eq:fF1}
  \langle f(T)^* \vy\otimes v,u\rangle
   =\begin{cases}
         \langle \vy, F_\alpha\rangle & \mbox{ if } v=\alpha u \\
              0 & \mbox{ otherwise}. 
     \end{cases}
 \eeq

  On the other hand, if $v=wu$, then $(T^w)^* v =u$ and 
   otherwise, $(T^w)^* v$ is orthogonal to $u$. 
  Thus,
\begin{equation}
 \label{eq:fF2}
   \langle \sum F_w^* \otimes (T^*)^w \vy\otimes v, u\rangle  = 
     \begin{cases} F_w^* \vy & \mbox{ if } v=wu \\
           0 & \mbox{ otherwise.}
   \end{cases}
\end{equation}
  Comparing equations \eqref{eq:fF1} and \eqref{eq:fF2} completes
  the proof.
\end{proof}

\begin{lemma}
 \label{lem:creation-vs-ncmap}
  Fix a positive integer $\ell$ and, as
  in Proposition {\rm\ref{thm:dilate}}, let
  $T=V_\ell^* SV_\ell$ act on $\cP_\ell$.
  Suppose $V:\mathbb C^n\to \mathbb C^n \otimes \cP_\ell$
  is an isometry and $X\in \matng$. If
  $f:\matg\to \math$ is a \nca
  and $V X^*  = (I\otimes T^*) V,$ then
 \[
   f(X)=V^* \big(I\otimes f(T)\big) V.
 \]
\end{lemma}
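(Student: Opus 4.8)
The plan is to read the conclusion off almost immediately from the characterization of free maps as the sequences that respect intertwining maps (Proposition \ref{prop:nc-map-alt}), together with the fact that free maps respect direct sums. First I would take adjoints in the hypothesis $VX^* = (I\otimes T^*)V$. Since $(VX_j^*)^* = X_j V^*$ and $\big((I\otimes T_j^*)V\big)^* = V^*(I\otimes T_j)$, this produces, coordinatewise in the tuple,
\[
  X V^* = V^* (I\otimes T).
\]
Thus $\Gamma := V^*\colon \mathbb C^n\otimes\cP_\ell\to\mathbb C^n$ is an intertwining map from the tuple $I\otimes T$ (of size $n\sigma(\ell)$) to $X$ (of size $n$). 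Because $f\colon\matg\to\math$ is a \ncac Proposition \ref{prop:nc-map-alt} applies (both $X$ and $I\otimes T$ lie in the domain $\matg$, so there is no domain restriction to worry about) and yields
\[
  f(X)\, V^* = V^* f(I\otimes T).
\]

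Next I would identify $f(I\otimes T)$. Under the canonical unitary identifying $\mathbb C^n\otimes\cP_\ell$ with the orthogonal direct sum of $n$ copies of $\cP_\ell$, the tuple $I_n\otimes T$ is carried to the $n$-fold direct sum $T\oplus\cdots\oplus T$. Since a \nca respects unitary similarity \eqref{eq:mapsU} and direct sums \eqref{eq:mapsS}, it follows that $f(I_n\otimes T) = I_n\otimes f(T)$. Substituting this into the displayed identity and multiplying on the right by $V$ gives
\[
  f(X)\, V^* V = V^* \big(I_n\otimes f(T)\big) V.
\]
Finally, $V$ is an isometry, so $V^*V = I_n$, and we conclude $f(X) = V^*\big(I\otimes f(T)\big)V$, as claimed.

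There is essentially no obstacle here beyond careful bookkeeping. The two points that warrant attention are: (i) that the adjoint of the (tuple) intertwining relation $VX^* = (I\otimes T^*)V$ really is the (tuple) intertwining relation $XV^* = V^*(I\otimes T)$, applied coordinate by coordinate, so that Proposition \ref{prop:nc-map-alt} is invoked with $\Gamma = V^*$; and (ii) that $f(I\otimes T) = I\otimes f(T)$ — a standard consequence of a free map respecting direct sums and unitary conjugation — is used with the correct flip unitary between $\mathbb C^n\otimes\cP_\ell$ and $\cP_\ell^{\oplus n}$. Everything else is formal.
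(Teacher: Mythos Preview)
Your argument is correct and is essentially the paper's own proof: take adjoints to get $XV^*=V^*(I\otimes T)$, invoke the intertwining property of a \nca (together with $f(I\otimes T)=I\otimes f(T)$) to obtain $f(X)V^*=V^*(I\otimes f(T))$, and then right-multiply by the isometry $V$. The only difference is that you spell out the identification $f(I\otimes T)=I\otimes f(T)$ explicitly, which the paper absorbs into the phrase ``from the definition of a \nca.''
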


\begin{proof}
  Taking adjoints gives
 $
   X V^* = V^* (I\otimes T).
 $
  From the definition of a \ncacomma
 \[
    f(X)V^* = V^* (I\otimes f(T)).
 \]
  Applying $V$ on the right and using the fact that $V$ is
  an isometry completes the proof.
\end{proof}

\begin{remark}\rm
  Iterating the intertwining relation $VX^*=(I\otimes T^*) V$, it
  follows that, $V(X^w)^* = (I\otimes (T^w)^*)V$.  In particular, 
  if $F$ is formal power series, then $F(X^*)V=V F(I\otimes T^*)$.
\end{remark}

 A \nca $f:\matg\to\math$ is
 {\bf homogeneous of degree $\ell$} if for
  all $X\in\matg$  and $z\in\mathbb C$,
 $
  f(zX)=z^\ell f(X).
 $

\begin{lemma}
 \label{lem:degl}
  Suppose $f:\matg \to \math$ is a \ncap
  If $f$ is continuous and
homogeneous of degree $\ell$, then there
  exists, for each word $w$ of length $\ell$,
  a vector $F_w\in\mathbb C^{\h}$ such that
 \[
   f(X)=\sum_{|w|=\ell} F_w \otimes X^w \quad \text{for all }
X\in \matg.
 \]
\end{lemma}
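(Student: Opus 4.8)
The plan is to combine Lemma~\ref{lem:fF} with the dilation theorem (Proposition~\ref{thm:dilate}) and the transfer formula of Lemma~\ref{lem:creation-vs-ncmap}, and then use homogeneity to kill all but the degree-$\ell$ terms. First I would observe that, since $f$ is continuous, Proposition~\ref{prop:continuous-analytic} makes $f$ analytic, so we are free to use derivatives and power-series reasoning locally; but actually the cleaner route avoids analyticity and argues purely algebraically from the three lemmas just proved. Fix $\ell$ and let $T=V_\ell^* S V_\ell$ on $\cP_\ell$ as before. By Lemma~\ref{lem:fF}, there are vectors $F_w\in\mathbb C^{\h}$, indexed by words $w$ of length at most $\ell$, with $f(T)=\sum_{|w|\le\ell}F_w\otimes T^w$. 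I would then exploit homogeneity: for $z\in\mathbb C$, on one hand $f(zT)=z^\ell f(T)=\sum_{|w|\le\ell} z^\ell F_w\otimes T^w$, while on the other hand $f(zT)=\sum_{|w|\le\ell} F_w\otimes (zT)^w=\sum_{|w|\le\ell} z^{|w|}F_w\otimes T^w$ (this last step uses that the coefficient vectors produced by Lemma~\ref{lem:fF} are the same for the free map $X\mapsto f(zX)$... more carefully, apply Lemma~\ref{lem:fF} directly to $zT$, or equate the two polynomial-in-$z$ expressions coming from evaluating the single free map $f$). Matching coefficients of $z^k$ forces $F_w\otimes T^w=0$ whenever $|w|=k\ne\ell$. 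Since $\{T^w : |w|\le\ell\}$ is a linearly independent set of operators on $\cP_\ell$ (the words $w$ applied to the empty vector $\emptyset$ give distinct basis vectors, so $T^w\emptyset=w$ and these are orthonormal), we conclude $F_w=0$ for all $w$ with $|w|\ne\ell$, hence
\[
  f(T)=\sum_{|w|=\ell} F_w\otimes T^w.
\]

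Next I would upgrade this from the specific tuple $T$ to an arbitrary $X\in\matg$. Given $X\in M_n(\mathbb C)^g$, I would first reduce to the nilpotent case and then to a suitable scaling. Write $X$ as a direct sum / limit argument will not be needed; instead, note that for any $X$ and any $z$ with $|z|$ small, $zX$ lies in $\cN_{\varepsilon}$ for appropriate $\varepsilon$, but more to the point I want to apply Lemma~\ref{lem:creation-vs-ncmap}. The obstacle is that Lemma~\ref{lem:creation-vs-ncmap} and Proposition~\ref{thm:dilate} apply to tuples that are \emph{nilpotent} (of order $\ell$) and \emph{strictly contractive} after scaling. So the plan is: (i) for a nilpotent $X$ of order $\ell$ with $\sum X_jX_j^*\prec r^2 I$, Proposition~\ref{thm:dilate} gives an isometry $V$ with $VX_j^*=r(I\otimes T_j^*)V$ where $T=V_\ell^*SV_\ell$, and then Lemma~\ref{lem:creation-vs-ncmap} (applied to the rescaled tuple, using that $X/r$ satisfies $V(X/r)^*=(I\otimes T^*)V$) together with the displayed formula $f(T)=\sum_{|w|=\ell}F_w\otimes T^w$ and homogeneity of $f$ yields
\[
  f(X)=r^\ell f(X/r)=r^\ell V^*(I\otimes f(T))V
     =\sum_{|w|=\ell} F_w\otimes \big(r^\ell\, V^*(I\otimes T^w)V\big)
     =\sum_{|w|=\ell} F_w\otimes X^w,
\]
where the last equality uses the iterated intertwining $V(X/r)^{w*}=(I\otimes (T^w)^*)V$ from the Remark after Lemma~\ref{lem:creation-vs-ncmap}, so that $V^*(I\otimes T^w)V=(X/r)^w$ and $r^\ell(X/r)^w=X^w$ since $|w|=\ell$. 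This handles all nilpotent tuples of order $\ell$ (after noting that any such tuple can be scaled into strict contraction range and that both sides are homogeneous of degree $\ell$, so the scaling is harmless).

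Finally, I would remove the nilpotency restriction. The clean way: fix an arbitrary $X\in M_n(\mathbb C)^g$ and consider the tuple $\widehat X = N\otimes X$ acting on $\cP_\ell\otimes \mathbb C^n$ built so as to be nilpotent of order $\ell$ while retaining enough information — more simply, use the direct-sum/intertwining structure. Actually the slickest argument: the free map $g:=f-\sum_{|w|=\ell}F_w\otimes x^w$ is again a continuous free map, homogeneous of degree $\ell$, which \emph{vanishes on all nilpotent tuples of order $\ell$}; I then want to conclude $g\equiv 0$. For this, use that every $X\in M_n(\mathbb C)^g$ can be connected to nilpotent tuples via the analytic/dilation machinery: specifically, the ampliation $\begin{bmatrix}X & H\\ 0 & X\end{bmatrix}$-type trick from \eqref{eq:propcontanal}, iterated, exhibits the Taylor coefficients of $g$ at $0$ along nilpotent directions, and homogeneity of degree $\ell$ means $g$ is determined by its behavior on nilpotents. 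Concretely: $g(X)=g(\lambda X)/\lambda^\ell$ for all $\lambda\ne 0$; letting $\lambda\to\infty$ is not available, but instead use that $g$ analytic and homogeneous of degree $\ell$ implies $g$ is a ``noncommutative polynomial'' of degree $\ell$ — its restriction to each $M_n(\mathbb C)^g$ is a homogeneous polynomial map of degree $\ell$ in the matrix entries, and a polynomial map vanishing on the (Zariski-dense, for $n\ge\ell$) set of nilpotent tuples of order $\ell$ must vanish identically on $M_n(\mathbb C)^g$; for $n<\ell$ one embeds $M_n\hookrightarrow M_\ell$ via direct sum with $0$ and uses that $f$ respects direct sums. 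I expect the main obstacle to be exactly this last density/polynomiality step: justifying that a continuous homogeneous-degree-$\ell$ free map is genuinely polynomial of degree $\ell$ on each level (so that vanishing on a Zariski-dense subset forces identical vanishing), and checking that nilpotent-of-order-$\ell$ tuples are Zariski dense in $M_n(\mathbb C)^g$ for $n\ge\ell$ — or, alternatively, routing around it by a pure dilation argument that realizes arbitrary $X$ as a corner of a nilpotent tuple on a larger space, which is cleaner but requires setting up the right isometry.
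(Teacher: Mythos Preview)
Your step 4 has a genuine gap: nilpotent tuples of order $\ell$ (or $\ell+1$) are \emph{not} Zariski dense in $M_n(\mathbb C)^g$ --- nilpotency is a Zariski-closed condition (it is cut out by the polynomial equations $X^w=0$ for $|w|=\ell+1$), and the generic tuple (e.g.\ $X=(I,\dots,I)$) is not nilpotent.  So the density argument cannot work, and the burden falls entirely on your alternative ``realize arbitrary $X$ as a corner of a nilpotent tuple,'' which you leave unspecified.  The difficulty is real: free maps do not respect compressions, so merely exhibiting $X$ as a corner of some nilpotent $Y$ does not let you read off $f(X)$ from $f(Y)$.

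The paper's route through this obstacle is as follows.  Given arbitrary $X$ (normalized so $\sum X_jX_j^*\prec I$), set $Y=X\otimes J$ where $J$ is the $(\ell+1)\times(\ell+1)$ nilpotent Jordan block; then $Y$ is nilpotent of order $\ell+1$, and the dilation/transfer machinery (your step~3) together with homogeneity and the linear independence of $J^0,\dots,J^\ell$ yields
\[
  f(Y)=\Big(\sum_{|w|=\ell}F_w\otimes X^w\Big)\otimes J^\ell.
\]
Separately --- and this is the missing idea --- one computes $f(X\otimes J)$ another way: for $E=D+J$ with $D$ diagonal with distinct entries, $E$ is diagonalizable, so the free-map and homogeneity properties give $f(X\otimes E)=f(X)\otimes E^\ell$; letting $D\to 0$ and using continuity yields $f(Y)=f(X)\otimes J^\ell$.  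Comparing the two expressions and noting $J^\ell\ne 0$ gives $f(X)=\sum_{|w|=\ell}F_w\otimes X^w$.  (Incidentally, your step~2 as written is shaky --- Lemma~\ref{lem:fF} is specific to $T$, and the coefficients it produces for the map $X\mapsto f(zX)$ are $z^\ell F_w$, not $z^{|w|}F_w$ --- but it is salvageable via the grading similarity $zT=D_zTD_z^{-1}$, and in any case it is not really needed: the homogeneity argument can be run at the level of nilpotent $X$ directly, which is what the paper does via the $J^m$.)
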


\begin{proof}
  Write $T=V_\ell^* S V_\ell$.
  Let $n$ and $X\in\matng$ be given and assume 
 $
    \sum X_j X_j^* \prec I.
 $
  Let $J$ denote the nilpotent
  Jordan block of size $(\ell+1) \times (\ell+1)$.
  Thus the entries of $J$ are zero, except for the $\ell$
  entries along the first super diagonal which are all $1$.
  Let $Y=X\otimes J$. Then $Y$ is nilpotent of order $\ell + 1$ and 
 $
   \sum Y_j Y_j^* \prec I.
 $
   By Proposition \ref{thm:dilate},  there is an isometry
  $V:\mathbb C^n\otimes \mathbb C^{\ell+1} \to
     (\mathbb C^n\otimes \mathbb C^{\ell+1})\otimes \cP_\ell$
  such that
 \[
   V Y^* = (I\otimes T^*) V.
 \]
   By Theorem \ref{lem:creation-vs-ncmap},
 $
    f(Y) = V^* (I\otimes f(T))V.
 $
  From Lemma \ref{lem:fF} there exists, for words $w$
  of length at most $\ell$, vectors $F_w\in\mathbb C^{\h}$ such that
 $
   f(T)=\sum_{|w|\le \ell} F_w\otimes T^w.
 $
  Because $f$ is a free map, $f(I\otimes T)=I\otimes f(T)$. 
   Hence,
 \[
    f(Y) =   \sum_{|w|\le \ell} F_w\otimes V^* (I\otimes T^w) V
         =  \sum_{|w|\le \ell}  F_w \otimes Y^w
         =  \sum_{m=0}^\ell \big(\sum_{|w|=m} F_w \otimes X^w \big) \otimes J^m
 \]
   Replacing $X$ by $zX$ and using the homogeneity of $f$ gives,
 \[
   z^\ell f(Y) = \sum_{m=0}^\ell \big(\sum_{|w|=m} F_w \otimes X^w \big) \otimes z^m J^m
 \]
   It follows that
 \begin{equation}
  \label{eq:meet1}
   f(Y) = \big(\sum_{|w|=\ell} F_w \otimes X_w\big) \otimes J^\ell.
 \end{equation}

  Next suppose that $E=D+J$, where $D$ is diagonal with distinct
  entries on the diagonal.  Thus there exists an invertible matrix
  $Z$ such that $ZE=DZ$.  Because $f$ is a \ncacomma
 $
   f(X\otimes D) = \oplus f(d_j X),
 $
  where $d_j$ is the $j$-th diagonal entry of $D$. Because
 of the homogeneity of $f$,
 \[
   f(X\otimes D) = \oplus d_j^\ell X = f(X)\otimes D^\ell.
 \]
   Hence,
 \[
    f(X\otimes E)  =  (I\otimes Z^{-1}) f(X\otimes D) (I\otimes Z)
       = (I\otimes Z^{-1}) f(X)\otimes D^\ell (I\otimes Z) 
        = f(X) \otimes E^\ell.
 \]
   Choosing a sequence of $D$'s which converge to $0$, so
   that the corresponding $E$'s converge to $J$, and using
   the continuity of $f$ yields $   f(Y) = f(X)\otimes J^\ell.$
   A comparison with \eqref{eq:meet1} 
   proves the lemma.
\end{proof}

\def\eps{\varepsilon}

\subsection{The proof of Proposition {\rm\ref{prop:rep-nc-map}}}
Let $f:\cN_\varepsilon\to \math$ be a \cncap
  Given $X\in\matng$, there is a disc
 $D_X=\{z\in\mathbb C: |z|<r_X\}$ such that $zX\in\cN_\varepsilon$
 for $z\in D_X$.  By analyticity of $f$, the function
 $D_X\ni z\mapsto f(zX)$ is analytic (with values in $\matnh$)
 and thus has a power series expansion,
\[
  f(zX)=\sum_m A_m z^m.
\]
 These $A_m=A_m(X)$ are uniquely determined by $X$ and hence there exist
 functions $\fhmn(X)=A_m(X)$ mapping $\matng$ to $\matnh$.
 In particular, if $X\in\cN_\varepsilon(n)$, then
\begin{equation}
 \label{eq:pow}
   f(X) = \sum \fhmn (X).
\end{equation}

\begin{lemma}
 For each $m$, the sequence $(\fhmn)_n$ is a continuous
 \nca $\matg\to\math$.
  Moreover, $\fhm$ is homogeneous of degree $m$.
\end{lemma}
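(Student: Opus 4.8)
The plan is to extract, for each fixed $m$, the homogeneous degree-$m$ part $f^{(m)}$ from the given analytic free map $f$ on $\cN_\varepsilon$ via the one-variable Cauchy integral in the scaling parameter $z$, and then verify it inherits the free-map axioms and homogeneity. First I would record the definition already set up above: for $X\in\matng$ the function $z\mapsto f(zX)$ is analytic on the disc $D_X$, so by the Cauchy integral formula $\fhmn(X)=\frac{1}{2\pi i}\oint_{|z|=r} \frac{f(zX)}{z^{m+1}}\,dz$ for any $0<r<r_X$; this exhibits $\fhmn$ as a locally uniform limit (over compact subsets of $\matng$, after rescaling $X$ so that $zX$ lands in $\cN_\varepsilon$) of integrals of the continuous function $f$, hence $\fhmn$ is continuous on all of $\matng$. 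Homogeneity of degree $m$ is then immediate: replacing $X$ by $wX$ and substituting $z\mapsto z/w$ in the integral (or simply comparing power series of $f(z w X)$ in $z$) gives $\fhm(wX)=w^m\fhm(X)$.

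Next I would check that $(\fhmn)_n$ respects direct sums and intertwinings, using Proposition~\ref{prop:nc-map-alt} to reduce to the intertwining condition. Since $f$ is a free map, for $X\in\cN_\varepsilon(n)$, $Y\in\cN_\varepsilon(m')$ and $\Gamma$ with $X\Gamma=\Gamma Y$ we have $(zX)\Gamma=\Gamma(zY)$ for all scalars $z$, hence $f(zX)\Gamma=\Gamma f(zY)$ for $z$ in a common disc; equating coefficients of $z^m$ gives $\fhm(X)\Gamma=\Gamma\fhm(Y)$. This proves the intertwining relation on the neighborhood $\cN_\varepsilon$. To upgrade it to all of $\matg$ I would invoke homogeneity: given arbitrary $X\in\matng$, $Y\in\matmg$ and intertwiner $\Gamma$, choose a scalar $t>0$ small enough that $tX\in\cN_\varepsilon$ and $tY\in\cN_\varepsilon$; then $(tX)\Gamma=\Gamma(tY)$, so $\fhm(tX)\Gamma=\Gamma\fhm(tY)$, and dividing by $t^m$ using homogeneity yields $\fhm(X)\Gamma=\Gamma\fhm(Y)$. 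The same scaling trick handles direct sums directly: $\fhm(X\oplus Y)=t^{-m}\fhm(t(X\oplus Y))=t^{-m}\fhm((tX)\oplus(tY))=t^{-m}(\fhm(tX)\oplus\fhm(tY))=\fhm(X)\oplus\fhm(Y)$.

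Finally, analyticity of each $\fhmn$ on $\matng$ follows from Proposition~\ref{prop:continuous-analytic}: a continuous free map on a noncommutative domain is automatically analytic, and $\fhm$ is a continuous free map on the noncommutative domain $\matg$. (Alternatively one reads analyticity straight off the Cauchy integral formula, since the integrand depends holomorphically on $X$.) The one point requiring a little care — the step I expect to be the main obstacle — is the passage from $\cN_\varepsilon$ to all of $\matg$ in verifying the free-map axioms: a priori $\fhm$ is only defined via the power series of $f$ near $0$, so one must argue that the formula $\fhm(X)=t^{-m}\fhm(tX)$ is well-defined (independent of the admissible scaling $t$), which is exactly the content of homogeneity established in the first step, and only then do the intertwining and direct-sum relations propagate outward by the scaling argument above.
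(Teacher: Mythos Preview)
Your proposal is correct and follows essentially the same approach as the paper: extract $\fhm(X)$ as the $m$-th coefficient of the one-variable analytic function $z\mapsto f(zX)$, then verify continuity, the intertwining property, and homogeneity by comparing power series in $z$. The one unnecessary detour is your two-step ``first on $\cN_\varepsilon$, then extend by scaling'' for the intertwining relation; the paper handles arbitrary $X,Y\in\matg$ in one stroke by simply choosing $z\in D_X\cap D_Y$ small enough that $zX,zY\in\cN_\varepsilon$, applying $f(zX)\Gamma=\Gamma f(zY)$ there, and equating coefficients of $z^m$ directly---so the ``main obstacle'' you anticipated does not actually arise.
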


 \begin{proof}
   Suppose $X,Y\in\matg$ and $X\Gamma = \Gamma Y$. For
  $z\in D_X\cap D_Y$,
 \[
    \sum \fhm(X)\Gamma  z^m
       =  f(zX)\Gamma 
       =  \Gamma f(zY) 
       =  \sum  \Gamma \fhm(Y) z^m.
 \]
   Thus  $\fhm(X)\Gamma =\Gamma \fhm(Y)$ for each $m$ and
  thus each $\fhm$ is a free map.
   Since $\fn$ is continuous, so is $\fhmn$ for each $n$.

   Finally, given $X$ and $w\in\mathbb C$, for $z$ of sufficiently
   small modulus,
 \[
   \sum \fhm(wX) z^m =  f(z(wX))
     =  f(zw X)
     =  \sum \fhm(X)w^m z^m.
 \]
   Thus $\fhm(wX)=w^m \fhm(X)$.
 \end{proof}

 Returning to the proof of Proposition \ref{prop:rep-nc-map},
 for each $m$, let $F_w$ for a word $w$ with $|w|=m$, denote the
 coefficients produced by Lemma \ref{lem:degl}
 so that
\[
  \fhm(X) =\sum_{|w|=m} F_w \otimes X^w.
\]
  Substituting into equation \eqref{eq:pow}
  completes the proof of
  the first part of the Proposition \ref{prop:rep-nc-map}.

  Now suppose that $f$ is uniformly bounded by $C$
  on $\cN$. If $X\in\cN,$ then
\[
  C\ge  \Big\| \frac{1}{2\pi} \int f(\exp(it)X)\exp(-imt)\, dt \big\|  
    =  \| \fhm(X)\|.
\]
  In particular, if $0<r<1$, then $\|\fhm(rX)\|\le r^m C$.

  Let $T=V_m^* S V_m$ as in Subsection \ref{sec:creat}.
   In particular, if $\delta <\varepsilon$, then $\delta T\in \cN$ and
   thus 
\[
  C^2  \ge \| \fhm(\delta T)\emptyset \|^2 
        = \delta^{2m} \sum_{|v| = m}  \|F_v \|^2.
\] 
  Thus, $\|F_v\|\le \frac{C}{\delta^m}$ for all $0<\delta <\varepsilon$
  and words $v$ of length $m$ and the last statement of 
  Proposition \ref{prop:rep-nc-map} follows. 
 \qed

\section{Proper \ncas}
 \label{sec:main-result}
   Given noncommutative domains $\cU$ and $\cV$ in
  $\matg$ and $\math$ respectively, a
  \nca $f:\cU\to\cV$ is {\bf proper} \index{proper}
  if each $\fn:\cU(n)\to \cV(n)$ is proper in the
  sense that if $K\subset \cV(n)$ is compact, then
  $f^{-1}(K)$ is compact.
  In particular,  for all $n$,
  if $(z_j)$ is a sequence in $\cU(n)$
  and $z_j\to\partial\cU(n)$, then
  $f(z_j)\to\partial\cV(n)$.
 In the case $g=\h$ and both $f$ and $f^{-1}$ are (proper)
 \cncas
 we say $f$ is 
 a 
{\bf free biholomorphism}.

\subsection{Proper implies injective}\label{sec:properInj}
  The following theorem was established in \cite[Theorem 3.1]{HKM3}.
We will not give the proof here but instead record a few
corollaries below.

\begin{theorem}
\label{thm:oneone}
  Let $\cU$ and $\cV$ be noncommutative domains containing $0$
  in $\matg$ and $\math$, respectively and
  suppose $f:\cU\to \cV$ is a \ncap
\begin{enumerate}[\rm (1)]
\item\label{it:1to1}
If $f$ is proper, then it is one-to-one,
  and $f^{-1}:f(\cU)\to \cU$ is a \ncap
\item\label{it:1to1ugly}
  If, for each $n$ and $Z\in\matnh$,
  the set $\fn^{-1}(\{Z\})$ has compact closure in $\cU$, 
  then $f$ is one-to-one
  and moreover, $f^{-1}:f(\cU)\to \cU$ is a \ncap
\item\label{it:xto1}
If $g=\h$ and $f:\cU\to\cV$ is
 proper and  continuous, then $f$ is biholomorphic.
\end{enumerate}
\end{theorem}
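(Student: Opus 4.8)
The plan is to reduce the statement, level by level, to classical facts about injective and proper holomorphic self-maps of domains in $\mathbb C^N$; the only genuinely ``free'' input needed is the conclusion of part~(1) that the set-theoretic inverse of $f$ is again a free map.

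First, by Proposition~\ref{prop:continuous-analytic} the continuous free map $f$ is analytic, so each $\fn\colon\cU(n)\to\cV(n)$ is holomorphic. Since $g=\h$, both $\cU(n)$ and $\cV(n)$ are domains in the \emph{same} space $\matng\cong\mathbb C^{gn^2}$, and by part~(1) the map $f$ --- hence each $\fn$ --- is injective. By the classical theorem on injective holomorphic maps (see, e.g., \cite{Krantz, For}), $\fn$ then has nowhere-vanishing Jacobian determinant and is a biholomorphism onto its image; in particular $\fn(\cU(n))$ is open in $\matng$, and therefore open in $\cV(n)$.

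Next I would check that $\fn(\cU(n))$ is closed in $\cV(n)$. If $w_k=\fn(z_k)\to w$ with $w\in\cV(n)$, then $K=\{w\}\cup\{w_k:k\in\N\}$ is a compact subset of $\cV(n)$, so properness of $\fn$ makes $\fn^{-1}(K)$ compact; hence $(z_k)$ has a subsequence converging to some $z\in\cU(n)$, and continuity gives $\fn(z)=w$, so $w\in\fn(\cU(n))$. Since $\cV(n)$ is connected and $\fn(\cU(n))$ is nonempty, open and closed, we get $\fn(\cU(n))=\cV(n)$; thus $f$ is onto.

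Finally, by part~(1) the inverse $f^{-1}=(\fn^{-1})$ exists and is a \ncap Each component $\fn^{-1}$ is holomorphic --- it is the inverse of the biholomorphism $\fn$ from the previous step --- so $f^{-1}$ is a \cncap It is moreover proper, since for compact $K\subset\cU(n)$ the preimage $(\fn^{-1})^{-1}(K)=\fn(K)$ is compact by continuity of $\fn$. Hence $f$ is a \fbap The only point requiring attention is the interplay between the dimension-free, global nature of the free-map axioms and the level-$n$ complex analysis used here: once part~(1) supplies a free inverse, openness of the image (via Osgood) and closedness of the image (via properness), together with connectedness of $\cV(n)$, complete the argument, and I do not anticipate a genuine obstacle beyond assembling these classical facts.
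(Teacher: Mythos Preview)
The paper itself does not prove Theorem~\ref{thm:oneone}; it cites \cite{HKM3} and proceeds directly to corollaries, so there is no in-paper argument to compare your proposal against.

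That said, your proposal has a genuine gap: it treats only part~(3), and does so by \emph{assuming} part~(1). The deduction you give for (3) is correct and standard --- injectivity from (1), then Osgood's theorem at each level $n$ to get a biholomorphism onto an open image, then properness plus connectedness of $\cV(n)$ for surjectivity, then the free structure of $f^{-1}$ again from (1). This is indeed how (3) follows from (1). But the substance of the theorem is (1) itself (and (2), which is the same argument under a weaker hypothesis): that a proper free map is automatically \emph{injective}. This is the genuinely noncommutative phenomenon --- it has no classical analogue, since commutative proper holomorphic maps are typically finite-to-one rather than injective --- and you have not attempted it at all. Your opening sentence, that ``the only genuinely `free' input needed is the conclusion of part~(1),'' is exactly the point: the conclusion of part~(1) \emph{is} the heart of the theorem, not an input you may invoke. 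The proof in \cite{HKM3} works by exploiting the direct-sum and intertwining axioms: if $f(X)=f(Y)$ one studies $X\oplus Y$ together with a one-parameter family of similarities, and properness is used to control a limit that forces $X=Y$. None of that mechanism appears in your write-up.
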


\begin{corollary}
 \label{cor:bianalytic-free}
  Suppose $\cU$ and $\cV$ are noncommutative domains in
 $\matg$. If $f:\cU\to \cV$ 
  is a free map and if each $f[n]$ is biholomorphic, 
  then $f$ is a free biholomorphism.
\end{corollary}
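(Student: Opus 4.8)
The plan is to derive the corollary from Theorem~\ref{thm:oneone}, after observing that the hypothesis that each $f[n]$ is biholomorphic already makes $f$ a proper, continuous (indeed analytic) \nca whose range is all of $\cV$.

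Here are the steps. First, each $f[n]\colon\cU(n)\to\cV(n)$ is in particular holomorphic, so $f$ is an analytic \ncacomma hence continuous, in the sense of \S\ref{sec:contAnal}. Second, each $f[n]$ is a homeomorphism of $\cU(n)$ onto $\cV(n)$, and a homeomorphism is proper: the preimage of a compact set $K\subseteq\cV(n)$ equals the image of $K$ under the continuous inverse $f[n]^{-1}$, hence is compact. Thus $f\colon\cU\to\cV$ is proper. Third, since each $f[n]$ maps onto $\cV(n)$, the range $f(\cU)=\bigl(f[n](\cU(n))\bigr)_n$ --- a noncommutative subset of $M(\C)^g$ by Proposition~\ref{prop:range} --- is exactly $\cV$.

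Now I would invoke Theorem~\ref{thm:oneone}. That result is stated for domains containing the origin, which is harmless here: fixing $c\in\cU(1)$, the scalar tuple $cI=(c_1I_n,\dots,c_gI_n)$ lies in $\cU(n)$ by closure under direct sums, and likewise $dI\in\cV(n)$ with $d=f[1](c)$, so replacing $f$ by the \nca $X\mapsto f(X+cI)-dI$ from $\cU-cI$ to $\cV-dI$ we may assume $0\in\cU\cap\cV$ without changing whether $f$ is a free biholomorphism. Part~(3) of Theorem~\ref{thm:oneone}, applied with $\h=g$, then states directly that the proper continuous map $f$ is biholomorphic; alternatively, part~(1) gives that $f$ is one-to-one with $f^{-1}\colon f(\cU)\to\cU$ a \ncacomma and since $f(\cU)=\cV$ and $f^{-1}[n]=f[n]^{-1}$ is holomorphic, $f^{-1}\colon\cV\to\cU$ is a free analytic map. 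Either way, $f$ is a free biholomorphism.

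I do not anticipate a real obstacle: all the substance is packaged inside Theorem~\ref{thm:oneone}. The only points deserving a word of care are the harmless normalization $0\in\cU\cap\cV$ and the remark that the inverse produced by Theorem~\ref{thm:oneone} is not merely a free map but analytic --- which is immediate, since the inverse of a biholomorphism is holomorphic.
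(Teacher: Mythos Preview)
Your proof is correct and follows essentially the same route as the paper: observe that each $f[n]$ biholomorphic implies each $f[n]$ is proper (and continuous), so $f$ is a proper continuous free map, and then invoke Theorem~\ref{thm:oneone}(\ref{it:xto1}). Your version is more careful than the paper's, which simply applies Theorem~\ref{thm:oneone} without addressing the ``containing $0$'' hypothesis; your translation by a scalar point $cI$ handles this cleanly and is a worthwhile addition.
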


\begin{proof}
 Since each $\fn$ is biholomorphic, each $\fn$ is proper. Thus $f$ is
 proper. Since also $f$ is a \ncacomma by Theorem
  \ref{thm:oneone}(\ref{it:xto1})  $f$ is
 a \fbap
\end{proof}

\begin{corollary}
 \label{cor:fprime}
  Let $\cU\subset\matg$ and $\cV\subset\math$ be  noncommutative
  domains.
  If $f:\cU\to\cV$ is a
  proper \cnca
 and if $X\in\cU(n)$, then
  $f^\prime(X):\matng\to \matnh$ is one-to-one.
  In particular, if $g=\h$, then $f^\prime(X)$
  is a vector space isomorphism.
\end{corollary}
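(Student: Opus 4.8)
The plan is to deduce injectivity of the derivative $f'(X)$ from injectivity of $f$ itself, using the block-upper-triangular evaluation formula \eqref{eq:propcontanal} of Proposition \ref{prop:continuous-analytic}(2). Suppose $f'(X)[H] = 0$ for some $H \in \matng$. The key observation is that for $t \in \mathbb{C}$ small, the tuple
\[
  Z_t = \begin{bmatrix} X & tH \\ 0 & X \end{bmatrix}
\]
lies in $\cU(2n)$ (since $\cU$ is open and $Z_0 = X\oplus X \in \cU(2n)$), and by \eqref{eq:propcontanal},
\[
  f(Z_t) = \begin{bmatrix} f(X) & t\, f'(X)[H] \\ 0 & f(X)\end{bmatrix}
         = \begin{bmatrix} f(X) & 0 \\ 0 & f(X)\end{bmatrix} = f(X\oplus X).
\]
Since $f$ is proper, Theorem \ref{thm:oneone}(\ref{it:1to1}) gives that $f$ is one-to-one on all of $\cU$, in particular on $\cU(2n)$. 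Hence $Z_t = X\oplus X$ for all small $t$, which forces $tH = 0$ and therefore $H = 0$. Thus $f'(X)\colon \matng \to \matnh$ is injective. When $g = \tg$, domain and codomain of $f'(X)$ are both $M_n(\mathbb C)^g \cong \mathbb C^{gn^2}$, so an injective linear map between them is a vector space isomorphism.

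I would also remark on why the norm hypothesis in \eqref{eq:propcontanal} causes no trouble: the formula holds for $H$ of sufficiently small norm, but $f'(X)[\,\cdot\,]$ is linear, so $f'(X)[H]=0$ iff $f'(X)[tH]=0$; replacing $H$ by a small scalar multiple lets us apply \eqref{eq:propcontanal} directly. One small point to address is that Proposition \ref{prop:continuous-analytic}(2) is stated for a noncommutative domain and a continuous (hence analytic) free map, which is exactly the setting here since $f$ is assumed to be a proper analytic free map.

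The main — and essentially only — obstacle is making sure the block tuple $Z_t$ genuinely sits inside the noncommutative domain $\cU$, and that $f$ evaluated on it is governed by \eqref{eq:propcontanal}; both are handled by openness of $\cU(2n)$ together with Proposition \ref{prop:continuous-analytic}. Once that is in place, the argument is a direct consequence of global injectivity of $f$ from Theorem \ref{thm:oneone}, so no further analytic input is needed. This is, incidentally, a place where the free/noncommutative setting is genuinely cleaner than the classical one: in several complex variables the Jacobian of a proper holomorphic self-map can degenerate on a proper analytic subvariety (the branch locus), whereas here properness forces $f$ to be globally one-to-one, and that one-to-oneness propagates to every derivative via the $2n \times 2n$ trick.
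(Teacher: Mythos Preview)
Your proposal is correct and follows essentially the same route as the paper: assume $f'(X)[H]=0$, scale $H$ so that the block upper-triangular tuple $\begin{bmatrix} X & H \\ 0 & X\end{bmatrix}$ lies in $\cU(2n)$, apply Proposition~\ref{prop:continuous-analytic}(2) to see that $f$ takes the same value there as at $X\oplus X$, and then invoke the injectivity of $f$ from Theorem~\ref{thm:oneone} to conclude $H=0$. Your write-up is somewhat more explicit than the paper's (you spell out why $Z_t\in\cU(2n)$, why the small-norm hypothesis in \eqref{eq:propcontanal} is harmless by linearity of $f'(X)[\cdot]$, and why the $g=\tilde g$ case follows by a dimension count), but the argument is the same.
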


\begin{proof}
  Suppose $f^\prime(X)[H]=0$.
 We scale $H$ so that $\begin{bmatrix} X & H \\ 0 & X\end{bmatrix} \in\cU$.
 From Proposition \ref{prop:continuous-analytic},
\[
  f\begin{bmatrix} X & H \\ 0 & X\end{bmatrix}
   = \begin{bmatrix} f(X) & f^\prime(X)[H] \\ 0 & f(X)\end{bmatrix}=
  \begin{bmatrix} f(X) & 0 \\ 0 & f(X)\end{bmatrix}
   = f\begin{bmatrix} X & 0 \\ 0 & X\end{bmatrix}.
\]
By the injectivity of $f$ established in Theorem \ref{thm:oneone},
$H=0$.
\end{proof}

\begin{remark}\rm
Let us note that Theorem \ref{thm:oneone} is sharp
as explained in \cite[\S 3.1]{HKM3}:
absent more conditions on the noncommutative
domains $\cU$ and $\cV$, nothing beyond  free biholomorphic
can be concluded about $f$.
\end{remark}

A natural condition on a noncommutative domain $\cU$, 
which we shall consider in \S\ref{sec:ku},
is circularity.
However, we first proceed to give some
free analogs of well-known results from several complex
variables.

\section{Several analogs to classical theorems}
\label{sec:analogs}
The conclusion of Theorem \ref{thm:oneone} is sufficiently
strong that most would say that it does not have a classical analog.
Combining it with 
 classical
several complex variable theorems yields \cnca 
analogs. Indeed,
hypotheses for these analytic \nca results are weaker than their
classical analogs would suggest.

\subsection{A free Carath\'eodory-Cartan-Kaup-Wu (CCKW) Theorem}
\label{sec:onto2}
 The commutative Carath\'eodory-Cartan-Kaup-Wu (CCKW) Theorem
 \cite[Theorem 11.3.1]{Krantz} says that
 if $f$ is an analytic self-map of a bounded domain
 in $\mathbb C^g$ which fixes
 a point $P$, then the eigenvalues of $f^\prime(P)$
 have modulus at most one. Conversely, if the eigenvalues
 all have  modulus one, then $f$ is in fact an automorphism;  and
 further if $f^\prime(P)=I$, then $f$ is the identity.
 The CCKW Theorem together with Corollary \ref{cor:bianalytic-free}
 yields Corollary \ref{cor:cckw1} below.
 We note
 that Theorem \ref{thm:oneone} can also be thought of
 as a noncommutative CCKW theorem in that
it concludes, like the CCKW Theorem does,  
 that a  map $f$ is biholomorphic, 
 but under the (rather different) assumption that $f$ is proper.

Most of the
proofs in this section are skipped and can be found in \cite[\S 4]{HKM3}.

\begin{corollary}[{\protect\cite[Corollary 4.1]{HKM3}}]
\label{cor:cckw1}
 Let $\cD$ be a given bounded
noncommutative domain
 which contains $0$.
 Suppose
 $f:\cD \to \cD$ is
 an \cncap
   Let $\phi$ denote the
 mapping $\fone:\cD(1)\to\cD(1)$ and assume $\phi(0)=0.$
\begin{enumerate}[\rm (1)]
 \item  If  all the eigenvalues of
 $\phi^\prime(0)$ have modulus one,
   then $f$ is a   free biholomorphism; and
 \item  if $\phi^\prime(0)=I$, then $f$ is the identity.
\end{enumerate}
\end{corollary}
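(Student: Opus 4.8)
The plan is to reduce the noncommutative statement to the classical CCKW theorem applied at the scalar level $\cD(1)$, and then use the rigidity machinery (Theorem~\ref{thm:oneone} and Corollary~\ref{cor:bianalytic-free}) to propagate the scalar conclusion up to all matrix levels. First I would observe that $\phi=f[1]$ is an analytic self-map of the bounded domain $\cD(1)\subset\C^g$ fixing $0$, so the classical CCKW theorem applies to $\phi$ directly. In case (1), if all eigenvalues of $\phi'(0)$ have modulus one, classical CCKW tells us $\phi$ is an automorphism of $\cD(1)$, i.e. $\phi$ is biholomorphic as a map $\cD(1)\to\cD(1)$.

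The heart of the argument is then to upgrade "$f[1]$ biholomorphic" to "$f$ a free biholomorphism." The clean way to do this is: since $f$ is a free map and $f[1]$ is biholomorphic, I want to invoke Corollary~\ref{cor:bianalytic-free}, but that corollary requires \emph{every} $f[n]$ to be biholomorphic, not just $f[1]$. So the real work is showing each $f[n]$ is biholomorphic (equivalently, proper) given that $f[1]$ is. Here is where I would use the structure of a free map together with the power series expansion from \S\ref{sec:powerseries}: the coefficients $F_w$ of $f$ are determined by $f$ on scalar points (indeed by $f[1]$ essentially, via homogeneous parts and the formulas in Lemma~\ref{lem:fF} and Lemma~\ref{lem:degl}), and the derivative $f'(0)$ acting on $M_n(\C)^g$ is block-diagonalized by the same linear pencil of coefficients $F_{x_j}$ that determine $\phi'(0)=f[1]'(0)$. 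Concretely, $\phi'(0)$ invertible (which follows since its eigenvalues have modulus one) forces $f'(0):M_n(\C)^g\to M_n(\C)^g$ to be a vector space isomorphism for every $n$, because in suitable coordinates $f'(0)$ is $\phi'(0)\otimes I_{M_n(\C)}$ up to the identification $M_n(\C)^g \cong M_n(\C)\otimes \C^g$.

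With $f'(0)$ invertible at every level, I would run a fixed-point / Cartan-style argument at each level $n$, or more efficiently invoke the noncommutative CCKW content already packaged in Theorem~\ref{thm:oneone} and Corollary~\ref{cor:cckw1}'s hypotheses: the cleanest route is probably to note that $f\circ f\circ\cdots$ (iterates) stay inside the bounded domain $\cD$, extract via Montel/normal-families at each level a limit, and use invertibility of $f'(0)$ to conclude each $f[n]$ is an automorphism of $\cD(n)$ — this is exactly the classical CCKW proof strategy carried out levelwise, legitimate because $\cD(n)$ is a bounded domain in $M_n(\C)^g$ and $f[n]$ is an analytic self-map of it fixing $0$ with $f[n]'(0)=\phi'(0)\otimes I$ having all eigenvalues of modulus one. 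Once each $f[n]$ is biholomorphic, Corollary~\ref{cor:bianalytic-free} immediately gives that $f$ is a free biholomorphism, proving (1).

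For (2), assuming $\phi'(0)=I$: then $f'(0):M_n(\C)^g\to M_n(\C)^g$ equals the identity for every $n$ by the same $\phi'(0)\otimes I$ identification. Applying the classical CCKW uniqueness statement ("$f'(P)=I$ implies $f$ is the identity") to each self-map $f[n]$ of the bounded domain $\cD(n)$ forces $f[n]=\mathrm{id}$ for all $n$, hence $f$ is the identity free map. The main obstacle I anticipate is the bookkeeping in Step~2 — verifying rigorously that the derivative of a free map at $0$ is the "ampliation" $\phi'(0)\otimes I_{M_n(\C)}$ under the identification $M_n(\C)^g\cong M_n(\C)\otimes\C^g$, so that modulus-one eigenvalues (resp. the identity) at level $1$ transfer to invertibility (resp. the identity) at every level; this uses that the degree-one homogeneous part of the power series of $f$ is a linear pencil $\sum_j F_{x_j}x_j$ acting entrywise, which is exactly the content of the power series results of \S\ref{sec:powerseries} together with Proposition~\ref{prop:continuous-analytic}(2).
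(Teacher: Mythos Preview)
Your approach is correct and matches the paper's: the paper indicates that Corollary~\ref{cor:cckw1} follows from the classical CCKW theorem together with Corollary~\ref{cor:bianalytic-free}, and after some detours you arrive at exactly this argument --- the key observation being that $f[n]'(0)=\phi'(0)\otimes I_{M_n(\C)}$ (so its eigenvalues are those of $\phi'(0)$, each with multiplicity $n^2$), and that $f[n](0)=0$ because $0\in\cD(n)$ is the $n$-fold direct sum of $0\in\cD(1)$ and $f$ respects direct sums. With those two facts in hand you can simply \emph{cite} classical CCKW at each level $n$ rather than re-running the Cartan iteration/normal-families argument by hand; this immediately gives that each $f[n]$ is biholomorphic (resp.\ the identity), and then Corollary~\ref{cor:bianalytic-free} finishes part~(1).
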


Note a classical biholomorphic function $f$ is
completely determined by its value and differential
at a point
 (cf.~a remark after \cite[Theorem 11.3.1]{Krantz}).
Much the same is true for \cncas and for the same reason.

\begin{proposition}
 \label{prop:derivative0}
   Suppose $\cU,\cV\subset \matg$ are noncommutative domains, $\cU$
   is bounded, both contain $0,$
 and $f,g:\cU\to \cV$ are proper \cncasp
 If $f(0)=g(0)$ and $f^\prime(0)=g^\prime(0)$, then $f=g$.
\end{proposition}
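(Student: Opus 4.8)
The plan is to imitate the classical proof that a biholomorphism of a bounded domain is determined by its value and derivative at a point, using the Cartan-type uniqueness argument together with the rigidity already established for proper free maps. Since $f$ and $g$ are proper \cncasp with $g=\tg$ equal, Theorem \ref{thm:oneone}(\ref{it:xto1}) tells us each is a \fbap Thus $h:=g^{-1}\circ f:\cU\to\cU$ is a well-defined proper \cnca (the composition of a proper free map with the free inverse of another; properness is preserved, and it is a free map by the intertwining characterization of Proposition \ref{prop:nc-map-alt}). By the chain rule applied at $0$, using $f(0)=g(0)$ so that $h(0)=0$, we get $h^\prime(0)=g^\prime(0)^{-1}f^\prime(0)=I$ (here $g^\prime(0)$ is invertible by Corollary \ref{cor:fprime}).

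Now I would invoke the free CCKW result, Corollary \ref{cor:cckw1}(2): $h:\cU\to\cU$ is a proper \cnca with $h(0)=0$, and its first-level restriction $\phi=h[1]$ is an analytic self-map of the bounded domain $\cU(1)\subset\C^g$ fixing $0$ with $\phi^\prime(0)=h^\prime(0)|_{\text{level }1}=I$. Hence $\phi=\mathrm{id}$ on $\cU(1)$. But more is needed: Corollary \ref{cor:cckw1} as stated only requires the \emph{scalar} differential $\phi^\prime(0)=I$ to conclude $h=\mathrm{id}$ as a free map, so in fact $h$ is the identity on all of $\cU$, giving $g^{-1}\circ f=\mathrm{id}$, i.e.\ $f=g$. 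Alternatively, if one does not want to route through Corollary \ref{cor:cckw1}, one can run the Cartan iteration directly on $h$: write $h=\mathrm{id}+h_{(k)}+\cdots$ for the lowest-order nonvanishing term $h_{(k)}$ of the power series expansion of $h$ (which exists by the results of \S\ref{sec:powerseries}, since $h$ is bounded on the bounded domain $\cU$), compute that the $m$-fold iterate $h^{\circ m}$ has lowest-order correction $m\,h_{(k)}$, and use the uniform bound on the family $\{h^{\circ m}\}$ (all mapping the bounded set $\cU$ into itself) together with the Cauchy-type estimates of Proposition \ref{prop:rep-nc-map} to force $h_{(k)}=0$ for every $k\ge 1$, hence $h=\mathrm{id}$.

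The main obstacle I anticipate is bookkeeping around boundedness and the domain of the power series: Corollary \ref{cor:cckw1} is stated for self-maps of a bounded domain, and one must check that $h=g^{-1}\circ f$ genuinely has $\cU$ (bounded) as both source and target, which it does since $f(\cU)\subset\cV$, $g$ maps $\cU$ onto $\cV$, and $g^{-1}$ maps $\cV$ onto $\cU$; the properness of $h$ then follows since $f$ and $g^{-1}$ are each proper. A secondary point is justifying the chain rule $h^\prime(0)=g^\prime(0)^{-1}f^\prime(0)$ at the level of free maps and the invertibility of $g^\prime(0)$, but the latter is exactly Corollary \ref{cor:fprime} (with $g=\tg$), and the former is just the ordinary chain rule applied to the analytic functions $h[n]$ at the scalar point. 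Given these, the argument is short, so I would present it in the first, cleaner form: reduce to $h$, note $h(0)=0$ and $h^\prime(0)=I$, and quote Corollary \ref{cor:cckw1}(2) to conclude $h=\mathrm{id}$ and therefore $f=g$.
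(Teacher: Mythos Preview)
Your proposal is correct and follows essentially the same approach as the paper: reduce to a self-map by composing one biholomorphism with the inverse of the other, observe this self-map fixes $0$ with derivative the identity, and apply Corollary~\ref{cor:cckw1}(2). The only cosmetic difference is that you take $h=g^{-1}\circ f:\cU\to\cU$, whereas the paper writes $h=f\circ g^{-1}$; your ordering is in fact the one that lands directly in the bounded domain $\cU$ as needed for Corollary~\ref{cor:cckw1}.
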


\begin{proof}
 By Theorem \ref{thm:oneone} both $f$ and $g$ are
 \fbasp
Thus $h=f\circ g^{-1}:\cU\to \cU$
 is a \fba fixing $0$ with $h[1]^\prime(0)=I$.
 Thus, by Corollary \ref{cor:cckw1}, $h$ is the identity.
 Consequently $f=g$.
\end{proof}

\subsection{Circular domains}
  A subset $S$ of a complex vector space is
  {\bf circular} if $\exp(it) s\in S$  whenever
  $s\in S$ and $t\in\R$.
  A noncommutative domain $\cU$ is circular
  if each $\cU(n)$ is circular.\index{circular domain}

 Compare the following theorem  to
 its commutative counterpart \cite[Theorem  11.1.2]{Krantz}
 where the domains $\cU$ and $\cV$ are the same.

\begin{theorem}
\label{thm:circLin}
  Let $\cU$ and $\cV$ be bounded noncommutative domains
  in $\matg$ and $\math$, respectively, both
  of which contain $0$.
  Suppose $f:\cU\to \cV$ is a
  proper \cnca
  with $f(0)=0$.
  If $\cU$ and  the range
 $\cR:= f(\cU)$ of $f$
 are circular,  then $f$ is linear.
\end{theorem}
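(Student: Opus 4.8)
The plan is to imitate the classical Cartan-type argument, exploiting the fact that (by Theorem \ref{thm:oneone}) a proper \cnca between noncommutative domains with $f(0)=0$ is automatically a \fbap First I would observe that since $\cU$ is bounded and $f:\cU\to\cR$ is a proper \cnca with $f(0)=0$, Theorem \ref{thm:oneone}(\ref{it:xto1}) applies to give that $f$ is a free biholomorphism onto $\cR$, with free analytic inverse $f^{-1}:\cR\to\cU$. For each $\theta\in\R$ let $\rho_\theta$ denote the rotation $X\mapsto \e X$; this is a linear free automorphism of $\matg$ (it commutes with direct sums and intertwinings and is applied entrywise). Because $\cU$ and $\cR$ are circular, $\rho_\theta$ restricts to a free automorphism of each, and in particular $\rho_\theta(0)=0$. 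Now form the composition
\[
  g_\theta := f^{-1}\circ \rho_{-\theta}\circ f \circ \rho_\theta : \cU \to \cU,
\]
which is a composition of free biholomorphisms, hence a \fba of $\cU$ fixing $0$. The key point is to compute its differential at $0$: since $\rho_\theta'(0)=\e\,\mathrm{id}$ and $(f^{-1})'(0) = f'(0)^{-1}$ (using that $f'(0)$ is invertible by Corollary \ref{cor:fprime}), the chain rule gives $g_\theta'(0) = f'(0)^{-1}\,(\exp(-i\theta)\,\mathrm{id})\, f'(0)\,(\e\,\mathrm{id}) = \mathrm{id}$. Passing to the scalar level, $(g_\theta)[1]$ is an analytic self-map of the bounded domain $\cU(1)$ fixing $0$ with differential the identity; hence by Corollary \ref{cor:cckw1}(2), $g_\theta$ is the identity free map.

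From $g_\theta = \mathrm{id}$ we read off the intertwining $f\circ\rho_\theta = \rho_\theta \circ f$ on $\cU$, i.e.
\[
  f(\e X) = \e\, f(X) \qquad \text{for all } \theta\in\R,\ X\in\cU.
\]
Next I would invoke the power series expansion. Since $\cU$ is a noncommutative domain containing $0$, it contains some $\cN_\varepsilon$, and $f$ is a \cnca there; moreover $f$ is uniformly bounded on $\cN_\varepsilon$ (as $\cV$, hence $\cR$, is bounded — using \eqref{eq:bd} — and $f(\cN_\varepsilon)\subset\cR$). So by Proposition \ref{prop:rep-nc-map} there is a formal power series $F=\sum_{w} F_w w = \sum_m F^{(m)}$ converging to $f$ on $\cN_\varepsilon$, with $F_w\in\C^{\tg}$. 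Substituting $\e X$ and using uniqueness of the coefficients of the (scalar) power series in $z=\e$ — more precisely, integrating $\frac{1}{2\pi}\int f(\exp(it)X)\exp(-imt)\,dt = F^{(m)}(X)$ as in the proof of Proposition \ref{prop:rep-nc-map} — the functional equation forces $F^{(m)}(X) = \e^{m-1} F^{(m)}(X)$ for all $\theta$, hence $F^{(m)} = 0$ for every $m\neq 1$. Thus on $\cN_\varepsilon$ we have $f(X) = F^{(1)}(X) = \sum_{|w|=1} F_w\otimes X^w = \sum_j F_{x_j}\otimes X_j$, which is linear. Finally, since $\cU(n)$ is open and connected and both $f[n]$ and the linear map $X\mapsto F^{(1)}(X)$ are analytic on $\cU(n)$ and agree on the open subset $\cN_\varepsilon(n)$, by the identity theorem they agree on all of $\cU(n)$; hence $f$ is linear on $\cU$.

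The main obstacle — really the crux of the argument — is establishing the rotational functional equation $f(\e X) = \e f(X)$ cleanly; everything before it (that $f$ is a free biholomorphism, that $\rho_\theta$ preserves the circular domains, that $g_\theta'(0)=\mathrm{id}$) is routine, and everything after it (extracting linearity from the power series, then extending from $\cN_\varepsilon$ to $\cU$ by the identity theorem) is bookkeeping. The one subtlety worth care is that Corollary \ref{cor:cckw1} is stated for self-maps of a \emph{bounded} noncommutative domain, so one must use the hypothesis that $\cU$ is bounded (not merely that $\cR$ is) to apply it to $g_\theta$; this is exactly why boundedness of $\cU$ appears in the statement. A second point to verify carefully is that $(f^{-1})'(0) = f'(0)^{-1}$ and that $f'(0)$ is invertible — the latter is Corollary \ref{cor:fprime} in the case $g=\tg$ — so the chain-rule computation of $g_\theta'(0)$ is legitimate.
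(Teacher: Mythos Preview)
Your argument follows the same Cartan-type strategy as the paper's, but there is a genuine gap when $g\neq\tg$. You invoke Theorem \ref{thm:oneone}(\ref{it:xto1}), which explicitly requires $g=\tg$, and your chain-rule computation of $g_\theta'(0)$ uses $(f^{-1})'(0)=f'(0)^{-1}$, which again needs $f'(0)$ to be invertible --- Corollary \ref{cor:fprime} only gives \emph{injectivity} of $f'(0)$ when $g\neq\tg$. When $g<\tg$ the range $\cR$ is not even open in $\math$, so the assertion that $f^{-1}:\cR\to\cU$ is ``free analytic'' is not a priori meaningful, and writing $f'(0)^{-1}$ is illegitimate.

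The paper repairs exactly these two points while keeping the same skeleton. It uses only Theorem \ref{thm:oneone}(\ref{it:1to1}) to get that $f$ is injective with free inverse $f^{-1}$, and then argues separately that each $f[n]$ is a smooth embedding (because $f'(X)$ is pointwise injective by Corollary \ref{cor:fprime}); hence $f[n]^{-1}$ is \emph{continuous}. Your $g_\theta$ (the paper's $F$) is therefore a continuous free self-map of $\cU$, hence analytic by Proposition \ref{prop:continuous-analytic}. For the derivative, rather than inverting $f'(0)$, the paper differentiates the relation $\e\, f(F(x))=f(\e\, x)$ at $0$ to obtain $f'(0)\,F'(0)=f'(0)$ and concludes $F'(0)=I$ using only injectivity of $f'(0)$. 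After that, Corollary \ref{cor:cckw1}(2) and the passage from $f(\e X)=\e f(X)$ to linearity are as you describe. (A small side remark: your appeal to Proposition \ref{prop:rep-nc-map} tacitly assumes $\cN_\varepsilon\subset\cU$, which is not among the hypotheses; it is cleaner to argue level by level, using the ordinary Taylor expansion of the analytic map $f[n]$ at $0$ on the open connected set $\cU(n)$.)
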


 The domain $\cU=(\cU(n))$ is {\bf weakly convex}
(a stronger notion of convex for a noncommutative domain appears
later)
if each $\cU(n)$ is a convex set.
 Recall a set $C\subseteq\C^g$ is convex, if for every
$X,Y\in C$, $\frac{X+Y}2\in C$.

\begin{corollary}
  Let $\cU$ and $\cV$ be bounded noncommutative domains
  in $\matg$  both
  of which contain $0$.
  Suppose $f:\cU\to \cV$ is a
  proper \cnca  with $f(0)=0$.
  If both $\cU$ and $\cV$ are circular and if one is weakly convex,
  then so is the other.
\end{corollary}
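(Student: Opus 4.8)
The plan is to reduce everything to Theorem \ref{thm:circLin} and then use the elementary fact that a linear isomorphism of vector spaces preserves convexity in both directions. First I would note that $\cU,\cV\subset\matg$ with the \emph{same} $g$, so here $\tg=g$; since $f$ is a proper continuous \cncacomma Theorem \ref{thm:oneone}(\ref{it:xto1}) shows that $f$ is a free biholomorphism, and in particular each $\fn:\cU(n)\to\cV(n)$ is onto. Hence the range $\cR:=f(\cU)$ equals $\cV$, which is circular by hypothesis.

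Next, $\cU$ is circular and $\cR=f(\cU)=\cV$ is circular, and $f(0)=0$ by assumption, so all the hypotheses of Theorem \ref{thm:circLin} are met and $f$ is linear. Being also a biholomorphism, each $\fn$ is then a linear isomorphism of the vector space $M_n(\C)^g$ carrying $\cU(n)$ bijectively onto $\cV(n)$.

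Finally I would invoke the routine observation that a linear isomorphism $T$ of a finite-dimensional (real or complex) vector space takes convex sets to convex sets, and so does $T^{-1}$; hence $\cU(n)$ is convex if and only if $\cV(n)=\fn(\cU(n))$ is convex. Running this over all $n$ shows that $\cU$ is weakly convex if and only if $\cV$ is, which is exactly the assertion.

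I do not anticipate a genuine obstacle here; the one point that must be handled with care is the surjectivity of $f$ --- this is where the equality $\tg=g$ and Theorem \ref{thm:oneone}(\ref{it:xto1}) are used --- so that the circularity hypothesis imposed on $\cV$ can legitimately be transferred to the range $f(\cU)$ before Theorem \ref{thm:circLin} is applied.
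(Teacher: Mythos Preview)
Your proposal is correct and follows essentially the same route as the paper: invoke Theorem~\ref{thm:oneone}(\ref{it:xto1}) to get that $f$ is onto $\cV$ (so the range $\cR=\cV$ is circular), apply Theorem~\ref{thm:circLin} to conclude $f$ is linear, and then observe that linear isomorphisms preserve convexity. The paper's proof is simply a one-sentence version of exactly this argument.
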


This corollary is an immediate
consequence of Theorem \ref{thm:circLin} and the fact
(see Theorem \ref{thm:oneone}(\ref{it:xto1})) that
$f$ is onto $\cV$. 

 We admit the hypothesis that the range $\cR= f(\cU)$ of $f$
 in Theorem \ref{thm:circLin}
 is circular seems pretty contrived when the domains
 $\cU$ and $\cV$ have a different number of variables.
 On the other hand if they  have the same number of variables
 it is the same as $\cV$ being circular since by
 Theorem \ref{thm:oneone}, $f$ is onto.

\begin{proof}[Proof of Theorem {\rm\ref{thm:circLin}}]
 Because $f$ is a proper \nca it is injective
 and its inverse (defined on $\cR$) is a \nca
 by Theorem \ref{thm:oneone}.  Moreover, using the
 analyticity of $f$,  its derivative
 is pointwise injective by Corollary \ref{cor:fprime}.
 It follows that each $\fn:\cU(n)\to \matnh$
 is an embedding \cite[p.~17]{GP}.
 Thus, each $\fn$  is a homeomorphism onto its range and
 its  inverse $\fn^{-1}=f^{-1}[n]$ is continuous.

 Define $F: \cU \to \cU$ by
\beq\label{eq:defF}
  F(x):=  f^{-1}\big( \exp(- i\theta) f(\e x) \big)
\eeq
 This function respects direct sums and similarities,
 since it is the composition of maps which do.
  Moreover, it is continuous by the discussion above.
 Thus $F$ is a \cncap

 Using the relation $\e f(F(x))= f(\e)$
 we find $\e f^\prime(F(0))F^\prime(0)=f^\prime(0)$.
 Since $f^\prime(0)$ is injective, $\e F^\prime(0)=I.$
 It follows from Corollary \ref{cor:cckw1}(2)
 that $F(x)=\e x$ and thus,
 by \eqref{eq:defF},
 $f(\e x)=\e f(x)$. Since this holds for every
  $\theta$, it follows that $f$ is linear.
\end{proof}

If $f$ is not assumed to map $0$ to $0$ (but instead fixes
some other point), then a
proper self-map 
need not be linear.
This follows from the  example  we discuss in
\S\ref{sec:exYes}.

\section{A free Braun-Kaup-Upmeier (BKU) Theorem}\label{sec:ku}

 Noncommutative domains $\cU$ and $\cV$ are {\bf freely biholomorphic}
 if there exists a free biholomorphism $f:\cU\to \cV$. 
 In this section we show how a theorem of 
 Braun-Kaup-Upmeier \cite{BKU,KU} can be used
 to show that  bounded circular noncommutative domains 
  that are freely biholomorphic are (freely) linearly biholomorphic.

\begin{defn}
  Given a domain $D\subset \mathbb C^g,$ let 
  $\Aut(D)$ denote the group of all biholomorphic
  maps from $D$ to $D$. Note that $D$
  is circular if and only if $\Aut(D)$
  contains all rotations; i.e., all maps of the form $z \mapsto \e z$
for $\theta\in\R$.
\end{defn}

 Let $\cD=(\cD(n))$ be a circular noncommutative domain.  
 Thus each $\cD(n)$ is open, connected,
 contains $0$ and is invariant under rotations.
 The set
 $\cD(1)\subset\mathbb C^g$ is in particular a circular domain in
 the classical
 sense and moreover $\Aut(\cD(1))$ contains all rotations.

\begin{theorem}[A free BKU Theorem]
 \label{thm:nc-circular}
   Suppose $\cU$ 
   and $\cD$ are 
   bounded, circular noncommutative domains which contain noncommutative neighborhoods of $0$. 
  If $\cU$ and $\cD$ are freely biholomorphic, then there is a 
  linear $($free$)$ biholomorphism $\lambda:\cD\to \cU$.
\end{theorem}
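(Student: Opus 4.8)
The plan is to reduce to the basepoint-preserving case and then invoke Theorem~\ref{thm:circLin}. So suppose $f\colon\cU\to\cD$ is a free biholomorphism. The crucial reduction is: \emph{it suffices to produce a free biholomorphism $h\colon\cU\to\cD$ with $h(0)=0$.} Indeed, granting such an $h$: each $h[n]$ is a biholomorphism of domains and hence proper, so $h$ is a proper (and continuous, analytic) free map; $\cU$ and $\cD$ are bounded and contain $0$; $h(0)=0$; and the range $h(\cU)=\cD$ is circular. Thus Theorem~\ref{thm:circLin} applies and forces $h$ to be linear, say $h(X)=\big(\sum_{j}\ell_{ij}X_j\big)_{i}$ for an invertible matrix $\ell=(\ell_{ij})$; then $\lambda:=h^{-1}\colon\cD\to\cU$ is a linear free biholomorphism, as required. (One could instead finish without Theorem~\ref{thm:circLin}: once $h(0)=0$ and the intertwining $h\circ r_\theta^{\cU}=r_\theta^{\cD}\circ h$ is established, where $r_\theta^{\cU}\colon X\mapsto e^{i\theta}X$ and $r_\theta^{\cD}\colon Y\mapsto e^{i\theta}Y$ are the rotation automorphisms, one gets $h(e^{i\theta}X)=e^{i\theta}h(X)$ for all $\theta$; since $h(0)=0$, the homogeneous power series expansion of $h$ about $0$ then collapses to its degree-one term.)

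Everything therefore comes down to modifying $f$ so that it fixes the origin --- equivalently, to producing a free automorphism $\chi$ of $\cU$ with $\chi(0)=p$, where $p:=f^{-1}(0)$; one then takes $h:=f\circ\chi$. Here $p$ is the common fixed point of the one-parameter group $\gamma_\theta:=f^{-1}\circ r_\theta^{\cD}\circ f$ of free automorphisms of $\cU$, and differentiating shows each $\gamma_\theta$ has derivative $e^{i\theta}I$ at $p$; so $\cU$ carries, besides the standard rotation action $\{r_\theta^{\cU}\}$ centered at $0$, a second such ``circle action'' centered at $p$. This is where the classical Braun--Kaup--Upmeier theorem \cite{BKU,KU} enters. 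The slices $\cU(1),\cD(1)\subset\C^g$ are genuine bounded circular domains containing $0$ (their automorphism groups contain all rotations, as observed above), and $f[1]\colon\cU(1)\to\cD(1)$ is a classical biholomorphism; so BKU, together with the accompanying analysis of one-parameter automorphism groups of bounded circular domains, provides an automorphism $\chi_1$ of $\cU(1)$ with $\chi_1(0)=p[1]$ that conjugates $\{r_\theta^{\cU}[1]\}$ to $\{\gamma_\theta[1]\}$.

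The main obstacle is then to \emph{promote} $\chi_1$ to a free automorphism $\chi$ of $\cU$, which is automatically basepoint-correct ($\chi(0)=p$). I expect this to follow by combining the classical circular-domain structure theory with the rigidity already developed for free maps: Proposition~\ref{prop:derivative0} and Corollary~\ref{cor:cckw1} show that a free self-map of a bounded circular domain is pinned down by its value and first derivative at $0$, so it should be possible to run the classical conjugation/fixed-point argument levelwise and see, using the freeness of all the data in play, that the resulting levelwise automorphisms assemble into a single free map $\chi$ with $\chi[1]=\chi_1$. Granting $\chi$, and hence the free biholomorphism $h=f\circ\chi\colon\cU\to\cD$ with $h(0)=0$, the reduction of the first paragraph completes the proof.
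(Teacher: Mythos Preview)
Your overall architecture---reduce to a free biholomorphism sending $0$ to $0$ and then invoke Theorem~\ref{thm:circLin}---is exactly the paper's strategy. The difference, and the genuine gap in your proposal, is in how you obtain the free automorphism that moves the basepoint.

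You propose to take the classical automorphism $\chi_1\in\Aut(\cU(1))$ furnished by BKU and ``promote'' it to a free automorphism $\chi\in\Aut_{\rm nc}(\cU)$. This is the step you yourself flag as the main obstacle, and the sketch you give (``run the classical argument levelwise and assemble'') does not work: an arbitrary automorphism of $\cU(1)$ need not extend to a free automorphism of $\cU$, and the rigidity results you cite (Proposition~\ref{prop:derivative0}, Corollary~\ref{cor:cckw1}) only give uniqueness of a free extension, not existence. Nothing in your setup produces candidate automorphisms of $\cU(n)$ for $n>1$ that could be compared via those rigidity statements. The circle actions $r_\theta^{\cU}$ and $\gamma_\theta$ are indeed free, but you have not explained how to manufacture from them a free map conjugating one to the other.

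The paper avoids promotion altogether. Instead it shows that the set $G=\{\tilde h : h\in\Aut_{\rm nc}(\cD)\}$ of commutative collapses is a \emph{closed} subgroup of $\Aut(\cD(1))$ containing rotations (this is the real work: Lemmas~\ref{lem:sequential} and~\ref{lem:nc-aut} use the power-series bounds of Proposition~\ref{prop:rep-nc-map} to run a normal-families argument and show a limit of collapses is again a collapse). Likewise for $H\subset\Aut(\cU(1))$. Then BKU (Theorem~\ref{thm:BKU-really}) is applied to these closed subgroups: the orbit $G(0)$ is a closed complex submanifold of $\cD(1)$, hence $\tilde\psi(G(0))=H(\tilde\psi(0))$ is a closed complex submanifold of $\cU(1)$, and part~(2) of BKU then forces $\tilde\psi(0)\in H(0)$. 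That last statement means precisely that some $h\in\Aut_{\rm nc}(\cU)$ has $\tilde h(\tilde\psi(0))=0$---the needed free automorphism is found \emph{inside} $\Aut_{\rm nc}(\cU)$ rather than lifted from $\Aut(\cU(1))$. You should replace your promotion step with this closed-subgroup/orbit argument.
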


 A noncommutative domain $\cD$ containing $0$ is {\bf convex} 
 if it is closed with respect to conjugation by contractions; i.e.,
 if $X\in\cD(n)$ and $C$ is a $m\times n$ contraction, then
\[
  CXC^* = (CX_1C^*,CX_2C^*,\dots,CX_gC^*) \in \cD(m).
\]
  It is not hard to see, using the fact that noncommutative domains
   are also closed with respect to direct sums, that 
  each $\cD(n)$ is itself convex.  In the case
  that $\cD$ is semialgebraic, then in fact an easy
  argument shows that the converse is true: if each $\cD(n)$
  is convex ($\cD$ is weakly convex), then $\cD$ is convex. (What is used
  here
  is that the domain is closed with respect to restrictions
  to reducing subspaces.)  In fact, in 
  the case that $\cD$ is semialgebraic and convex, it
  is equivalent to being an LMI,
cf.~\cite{HM} for precise statements and proofs;
the topic is also addressed briefly in \S\ref{sec:convexIsLMI} below.
As an important corollary of Theorem \ref{thm:nc-circular}, 
we have the following nonconvexification 
result.

\begin{corollary}
 \label{cor:bku-for-lmi}
   Suppose $\cU$ is a bounded circular noncommutative domain which contains
   a noncommutative neighborhood of $0$. 

\ben[\rm(1)]
\item
   If $\cU$ is freely biholomorphic to
   a bounded circular weakly convex noncommutative domain that
   contains a noncommutative neighborhood of $0$, then $\cU$ is
   itself convex.
\item
   If $\cU$ is freely biholomorphic to a bounded circular LMI domain,
  then $\cU$ is itself an LMI domain.
\een
\end{corollary}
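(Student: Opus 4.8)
The plan is to reduce both statements to the free Braun--Kaup--Upmeier Theorem \ref{thm:nc-circular}. Write $\cD$ for the bounded circular domain to which $\cU$ is freely biholomorphic (the weakly convex one in (1), the LMI domain $\cD_\cL$ in (2)); since a free biholomorphism forces equal numbers of variables, $\cD\subset\matg$ as well. Theorem \ref{thm:nc-circular} then hands us a \emph{linear} free biholomorphism $\lambda:\cD\to\cU$, and the whole argument comes down to showing that the properties ``convex'' and ``being an LMI domain'' are invariant under linear free biholomorphisms. So the first thing I would do is pin down the shape of such a $\lambda$.

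The claim I would establish is that $\lambda(X)=CX:=\big(\sum_j C_{1j}X_j,\dots,\sum_j C_{gj}X_j\big)$ for some invertible $C\in M_g(\C)$. Since $\lambda$ is analytic it has, on some $\cN_\varepsilon\subset\cD$, a power series expansion (Proposition \ref{prop:rep-nc-map}); because every $\lambda[n]$ is $\C$-linear, all homogeneous components of this series other than the degree-one one must vanish, and the degree-one component is of the form $\sum_j F_{x_j}\otimes X_j$ (cf.\ Lemma \ref{lem:degl}). Hence $\lambda(X)=CX$ on $\cN_\varepsilon$ with $C_{kj}=(F_{x_j})_k$; since $X\mapsto CX$ is analytic and each $\cD(n)$ is connected, this identity propagates to all of $\cD$ by analytic continuation. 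Running the same argument on $\lambda^{-1}:\cU\to\cD$ gives $\lambda^{-1}(Y)=C^{-1}Y$, so $C$ is invertible. The payoff of this normal form is that $X\mapsto CX$ commutes with conjugation: $C(\Gamma X\Gamma^\ast)=\Gamma(CX)\Gamma^\ast$ entrywise for any rectangular $\Gamma$, and likewise for $C^{-1}$.

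With that in hand I would conclude as follows. For (1): each $\lambda[n]$ is a linear bijection of $\matng$, so $\cU(n)=\lambda[n](\cD(n))$ is convex and $\cU$ is weakly convex; moreover, if $Y\in\cU(n)$ and $\Gamma$ is an $m\times n$ contraction, then $\lambda^{-1}(\Gamma Y\Gamma^\ast)=\Gamma\,\lambda^{-1}(Y)\,\Gamma^\ast$ lies in $\cD(m)$ because $\lambda^{-1}(Y)\in\cD(n)$ and $\cD$ is closed under conjugation by contractions, whence $\Gamma Y\Gamma^\ast\in\cU(m)$ and $\cU$ is convex. (Here one invokes that $\cD$ itself is convex; as recalled just before the corollary, for the semialgebraic $\cD$ of interest this coincides with weak convexity.) For (2): with $\cD=\cD_\cL$ and $\cL(x)=I_d+\sum A_jx_j+\sum A_j^\ast x_j^\ast$, set $B_k=\sum_j (C^{-1})_{jk}A_j\in M_d(\C)$ and $\tilde L(y)=\sum_k B_ky_k$. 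The substitution $X=C^{-1}Y$ gives $\cL(X)=I_d\otimes I_n+\sum_k B_k\otimes Y_k+\sum_k B_k^\ast\otimes Y_k^\ast=\tilde\cL(Y)$, so $\lambda[n](\cD_\cL(n))=\{Y:\tilde\cL(Y)\succ0\}=\cD_{\tilde\cL}(n)$, i.e.\ $\cU=\cD_{\tilde\cL}$ is an LMI domain (its levels are convex, hence connected, so there is nothing extra to check).

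I expect the main obstacle to be the normal-form step: ``linear'' has to be parsed as a statement about each $\lambda[n]$, the power series is available only on a small neighborhood of $0$, and one must then spread the formula $\lambda(X)=CX$ over all of $\cD$ using connectedness and analyticity. Once that is secured, the rest is bookkeeping --- for (1) the transfer of conjugation-closure along $\lambda^{-1}$, for (2) the linear change of coordinates $A_j\rightsquigarrow B_k$ inside the pencil. A lesser point worth care in (1) is reconciling the strong notion of convexity asserted for $\cU$ with the weak-convexity hypothesis on $\cD$, which is why one appeals to the semialgebraic reduction mentioned just before the corollary.
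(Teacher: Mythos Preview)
Your approach is exactly the paper's: invoke Theorem \ref{thm:nc-circular} to obtain a linear free biholomorphism and then observe that the properties in question transfer along such a map. The paper's own proof is two sentences and leaves everything you spell out (the normal form $\lambda(X)=CX$, the explicit pencil $\tilde\cL$ in (2)) to the reader; your flagging of the weak-versus-strong convexity issue in (1) is a subtlety the paper simply glosses over.
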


\begin{proof}
 It is not hard to see that an LMI domain does in fact contain a
 noncommutative neighborhood of the origin.  Thus, both statements of
 the corollary follow immediately from the theorem. 
\end{proof}

 Note that the corollary is in the free spirit of the main
  result of \cite{KU}.

\begin{remark}\rm
A main motivation for our line of research was investigating 
\emph{changes of variables} with an emphasis on achieving
convexity.
  Anticipating that the main result from \cite{HM} applies
  in the present context (see also \S\ref{sec:convexIsLMI}),
   if $\cD$ is a convex, bounded, noncommutative 
    semialgebraic set 
   then it is an LMI domain.  In this way, the hypothesis in the last
  statement of the corollary could be rephrased as:
   if $\cU$ is freely biholomorphic to a bounded circular convex
   noncommutative semialgebraic set, then $\cU$ is itself an LMI domain.  
   In the context of \S\ref{subsec:motiv},  the conclusion 
   is that in this circumstance domains biholomorphic 
   to bounded,  convex, circular basic semialgebraic sets
   are already in fact determined by an LMI. Hence  there no 
   nontrivial changes of variables in this setting.
\end{remark}

 For the reader's convenience we include here
 the version of \cite[Theorem 1.7]{BKU} 
 needed in the proof of
 Theorem \ref{thm:nc-circular}. Namely,
 the case in which  the ambient domain is $\mathbb C^g.$
  Closed here means closed in the topology of 
  uniform convergence on compact subsets.
  A bounded domain $D\subset \mathbb C^g$ is
  symmetric if for each $z\in D$ there is an involutive 
  $\varphi \in \Aut(D)$ such that $z$ 
  is an isolated fixed point of $\varphi$ \cite{Hg}. 

\begin{theorem}[\cite{BKU}]
 \label{thm:BKU-really}
 Suppose $S\subset \mathbb C^g$ is a bounded circular domain
 and $G \subset \Aut(S)$ is a closed subgroup of $\Aut(S)$
 which contains all rotations.
 Then
\ben[\rm (1)]
\item
 there is a
 closed $(\mathbb C$-linear$)$ subspace $M$ of $\C^g$ such that
 $A := S \cap M =G(0)$ is the orbit of the origin.
\item
 $A$ is a bounded symmetric 
 domain in $M$ and coincides with
\[
  \{z \in S : G(z) \text{ is a closed complex submanifold of } S \}.
\]
\een
 In particular  two
 bounded circular domains are biholomorphic
 if and only if they are linearly biholomorphic.
\end{theorem}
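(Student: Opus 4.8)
The plan is to build a graded Lie-algebra structure out of $G$, use it to realize the orbit $A:=G(0)$ as a bounded symmetric domain sitting linearly inside $S$, and then to derive the concluding ``in particular'' from a Cartan-type rigidity argument. I would begin by recording the two classical theorems of H.~Cartan valid for any bounded domain $S\subset\mathbb C^g$: that $\Aut(S)$, in the topology of locally uniform convergence, is a real Lie group acting properly and real-analytically on $S$; and the uniqueness theorem, that an automorphism of $S$ fixing a point $p$ with derivative $I$ at $p$ is the identity. Because $S$ is bounded and circular, the rotations $z\mapsto e^{i\theta}z$ lie in $\Aut(S)$, hence in the closed subgroup $G$; in particular $-\mathrm{id}=R_\pi\in G$. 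Write $\mathfrak g=\mathfrak{aut}(S)$ for the Lie algebra of complete holomorphic vector fields on $S$ and $\mathfrak h\subseteq\mathfrak g$ for the subalgebra of $G$.

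The structural core is the Euler grading. On a bounded circular domain the fields in $\mathfrak g$ are polynomial (Cartan), and the generator $E=\sum_j z_j\partial_{z_j}$ of the rotation group acts by $\mathrm{ad}\,E$ with integer eigenvalues; boundedness truncates the resulting grading to $\mathfrak g=\mathfrak g_{-1}\oplus\mathfrak g_0\oplus\mathfrak g_1$, where $\mathfrak g_0$ is the algebra of linear fields $z\mapsto Az$ (the Lie algebra of the isotropy $G_0=\{g:g(0)=0\}$, which is linear by Cartan), $\mathfrak g_{-1}$ the constant fields, and $\mathfrak g_1$ the quadratic fields. Since the rotations lie in $G$, the subalgebra $\mathfrak h$ is $\mathrm{ad}\,E$-invariant and inherits the grading $\mathfrak h=\mathfrak h_{-1}\oplus\mathfrak h_0\oplus\mathfrak h_1$. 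The evaluation $\xi\mapsto\xi(0)$ sends $\mathfrak h$ onto the space of constant terms $M:=\mathfrak h_{-1}(0)\subseteq\mathbb C^g$, and since $M$ is invariant under the rotations $a\mapsto e^{i\theta}a$ (because $R_\theta\in G$), it is a complex-linear subspace. This produces the subspace of part~(1).

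To finish parts (1) and (2) I would identify the orbit $A=G(0)$ with $S\cap M$ and show it is a bounded symmetric domain. Homogeneity is automatic ($A$ is a $G$-orbit), and the symmetry at $0$ is $z\mapsto-z=R_\pi\in G$, an involution fixing $0$ as an isolated point of $A$; transporting it by $G$ gives a symmetry at every point, so $A$ is symmetric once it is seen to be a domain in $M$. The inclusion $A\subseteq S\cap M$ follows because the transvection flows keep the orbit of $0$ inside $M$; the reverse inclusion $S\cap M\subseteq A$ uses that a symmetric domain in its circular realization is convex and balanced and that its transvection group acts transitively, so every point of $S\cap M$ is reached from $0$ by an element of $G$. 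The intrinsic description $A=\{z\in S: G(z)\text{ is a closed complex submanifold of }S\}$ then splits into an easy inclusion --- for $z\in A$ one has $G(z)=A=S\cap M$, a relatively closed complex submanifold --- and a hard one, namely that for $z\notin A$ the extra quadratic directions force $G(z)$ to fail to close as a complex submanifold. I expect this identification --- pinning the orbit down as exactly $S\cap M$ while simultaneously exhibiting it as a bounded symmetric domain --- to be the main obstacle, since it is precisely where the Jordan-triple geometry of bounded symmetric domains (the Harish--Chandra/Koecher realization, convexity of the circular model, transitivity of transvections) must be deployed in full; this is the substance of the analysis in \cite{BKU}.

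For the concluding equivalence, let $\Phi:S\to S'$ be a biholomorphism of bounded circular domains and apply the structure theorem to $G=\Aut(S)$ and $G'=\Aut(S')$. Conjugation by $\Phi$ carries $\Aut(S)$ isomorphically onto $\Aut(S')$, and since the distinguished set $A$ is characterized intrinsically in part~(2), $\Phi$ maps $A_S$ onto $A_{S'}$; in particular $\Phi(0)\in A_{S'}=G'(0)$, so some $\psi\in\Aut(S')$ satisfies $\psi(\Phi(0))=0$. Replacing $\Phi$ by $\psi\circ\Phi$, we may assume $\Phi(0)=0$. Then $z\mapsto\Phi^{-1}\big(e^{i\theta}\Phi(e^{-i\theta}z)\big)$ is an automorphism of $S$ fixing $0$ with derivative $I$ there, hence the identity by Cartan's uniqueness theorem; equivalently $\Phi(e^{i\theta}z)=e^{i\theta}\Phi(z)$. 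Matching homogeneous Taylor components of $\Phi$ at $0$ under this equivariance annihilates every component but the linear one, so $\Phi$ is linear, yielding the desired linear biholomorphism between $S$ and $S'$.
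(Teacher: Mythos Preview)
The paper does not prove this theorem at all. Theorem~\ref{thm:BKU-really} is quoted verbatim from \cite{BKU} (it is introduced with ``For the reader's convenience we include here the version of \cite[Theorem 1.7]{BKU} needed in the proof of Theorem~\ref{thm:nc-circular}'') and is used as a black box in the proof of the free BKU theorem. There is no ``paper's own proof'' to compare against.

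That said, your sketch is a reasonable outline of how the original Braun--Kaup--Upmeier argument runs: the graded Lie-algebra structure $\mathfrak g=\mathfrak g_{-1}\oplus\mathfrak g_0\oplus\mathfrak g_1$ coming from $\mathrm{ad}\,E$, the identification $M=\mathfrak h_{-1}(0)$, and the Cartan uniqueness argument for the ``in particular'' clause are all the right ingredients. You correctly flag the hard step --- showing $G(0)=S\cap M$ and that it is bounded symmetric --- as the place where the Jordan-theoretic machinery of \cite{BKU} is genuinely needed, and you do not pretend to have carried it out. One small point: your argument that $M$ is $\mathbb C$-linear because ``$M$ is invariant under the rotations $a\mapsto e^{i\theta}a$'' is circular as stated, since rotation-invariance of a real subspace only gives $iM\subseteq M$ if you already know the subspace is closed under real scalar multiples of $i$; the cleaner route is that $\mathfrak h_{-1}$ is the $(-1)$-eigenspace of $\mathrm{ad}(iE)$ inside $\mathfrak h$, and evaluation at $0$ is $\mathbb C$-linear on constant vector fields. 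But since the paper simply cites the result, none of this is needed here.
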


 We record the following 
 simple lemma  before turning to the
 proof of Theorem \ref{thm:nc-circular}.

\begin{lemma}
 \label{lem:scalar-cckw-app}
  Let $D\subset \mathbb C^g$ be a bounded domain
  and suppose $(\varphi_j)$ is a sequence from
  $\Aut(D)$ which converges uniformly on compact
  subsets of $D$ to $\varphi \in \Aut(D).$
\ben[\rm(1)]
\item
$\varphi_j^{-1}(0)$ converges to $\varphi^{-1}(0)$;
\item
If the
  sequence $(\varphi_j^{-1})$ converges uniformly
  on compact subsets of $D$ to $\psi$, then
  $\psi=\varphi^{-1}$.
\een
\end{lemma}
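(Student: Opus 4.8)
The plan is to deduce both statements from the classical theory of normal families on bounded domains, supplemented by a limit-interchange argument for compositions. Since $D$ is bounded, Montel's theorem makes $\Aut(D)$ a normal family: every sequence in it has a subsequence that converges uniformly on compact subsets of $D$ to some holomorphic map $D\to\overline D$. (The point $0$ is assumed to lie in $D$ so that $\varphi^{-1}(0)$ and the $\varphi_j^{-1}(0)$ are defined; it plays no distinguished role.)

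First I would prove (2). Assume $\varphi_j\to\varphi\in\Aut(D)$ and $\varphi_j^{-1}\to\psi$, both locally uniformly, where a priori $\psi\colon D\to\overline D$ is only known to be holomorphic. The crux is that, because $\varphi$ is a genuine automorphism, $\varphi(a)$ lies in the \emph{interior} $D$ for every $a\in D$; choose a compact neighbourhood $K\subset D$ of $\varphi(a)$. Since $\varphi_j(a)\to\varphi(a)$, eventually $\varphi_j(a)\in K$, and writing
\[
  \varphi_j^{-1}(\varphi_j(a))-\psi(\varphi(a))
   = \bigl[\varphi_j^{-1}-\psi\bigr](\varphi_j(a)) + \bigl[\psi(\varphi_j(a))-\psi(\varphi(a))\bigr],
\]
the first bracket tends to $0$ by uniform convergence on $K$ and the second by continuity of $\psi$. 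Hence $\varphi_j^{-1}(\varphi_j(a))\to\psi(\varphi(a))$; but $\varphi_j^{-1}(\varphi_j(a))=a$ for all $j$, so $\psi(\varphi(a))=a$. As $a\in D$ was arbitrary, $\psi\circ\varphi=\mathrm{id}_D$, and since $\varphi$ is a bijection of $D$ this forces $\psi=\varphi^{-1}$; in particular $\psi(D)=D$, so no boundary degeneration occurs.

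Part (1) then follows by a routine subsequence argument. If $\varphi_j^{-1}(0)$ failed to converge to $\varphi^{-1}(0)$, there would be an $\varepsilon>0$ and a subsequence with $|\varphi_{j_k}^{-1}(0)-\varphi^{-1}(0)|\ge\varepsilon$. By normality of $\Aut(D)$ I would pass to a further subsequence along which $\varphi_{j_k}^{-1}$ converges locally uniformly to some holomorphic $\psi\colon D\to\overline D$; since $\varphi_{j_k}\to\varphi\in\Aut(D)$ still holds, part (2) applies and yields $\psi=\varphi^{-1}$. Evaluating at $0$ gives $\varphi_{j_k}^{-1}(0)\to\varphi^{-1}(0)$, contradicting the choice of the subsequence.

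The only genuinely delicate point — and the one I expect to be the main obstacle in writing this carefully — is the boundary issue underlined in (2): limits of automorphisms of a bounded domain can degenerate to maps whose image lies in $\partial D$, so one may not naively substitute $\varphi_j(a)$ into $\varphi_j^{-1}$ and pass to the limit ``inside the composition'' unless the inner argument is known to stay in a compact subset of the interior, which is precisely what $\varphi\in\Aut(D)$ supplies. Alternatively, one can sidestep the normal-family bookkeeping altogether by invoking H.\ Cartan's theorem that $\Aut(D)$ is a topological group in the topology of uniform convergence on compacta: then inversion is continuous, which gives (2) at once and (1) by evaluating the convergence $\varphi_j^{-1}\to\varphi^{-1}$ at $0$.
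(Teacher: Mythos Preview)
Your proof is correct but takes a genuinely different route from the paper's. The paper proves (1) first, directly from equicontinuity: since $(\varphi_j^{-1})$ is uniformly bounded it is locally equicontinuous, and from $\varphi_j(\varphi^{-1}(0))\to 0$ one obtains $\|\varphi_j^{-1}(0)-\varphi^{-1}(0)\|<\varepsilon$ for large $j$. It then proves (2) by considering $f=\varphi\circ\psi$, using (1) to get $f(0)=0$, passing derivatives through the limit to obtain $f'(0)=I$, and invoking the Carath\'eodory--Cartan--Kaup--Wu theorem (Corollary~\ref{cor:cckw1}) to conclude $f=\mathrm{id}$. You reverse the order: you prove (2) by an elementary limit-interchange showing $\psi\circ\varphi=\mathrm{id}$ directly, with no appeal to CCKW, and then deduce (1) from (2) via a normal-family subsequence argument. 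Your approach is more self-contained and avoids the rigidity theorem entirely; the paper's approach is natural in context since CCKW is already available and central to the surrounding section. Both are short; yours is arguably the more elementary.
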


\begin{proof}
(1)
  Let $\varepsilon >0$ be given.  The sequence $(\varphi_j^{-1})$
  is a uniformly bounded sequence and is thus locally 
   equicontinuous.  Thus, there is a $\delta>0$ such that
  if $\|y-0\|<\delta$, then $\|\varphi_j^{-1}(y)-\varphi_j^{-1}(0)\|<\varepsilon$.
   On the other hand, $(\varphi_j(\varphi^{-1}(0)))_j$ converges to
   $0$, so for large enough $j$, $\|\varphi_j(\varphi^{-1}(0))-0\|<\delta.$
   With $y=\varphi_j(\varphi^{-1}(0))$, it follows that
   $\| \varphi_j(\varphi^{-1}(0)) - 0 \|< \varepsilon$.

(2)
  Let $f=\varphi(\psi)$. From the first part of the lemma, 
  $\psi(0)=\varphi^{-1}(0)$ and hence $f(0)=0$.  
  Moreover, $f^\prime(0)=\varphi^\prime(\psi(0)) \psi^\prime(0)$.
  Now $\varphi_j^\prime$ converges uniformly on compact sets
  to $\varphi^\prime.$ Since also $\varphi_j^\prime(\psi(0))$
  converges to $\varphi^\prime(\psi(0))$, it follows that
  $\varphi_j^\prime(\varphi_j^{-1}(0))$ converges to $\varphi^\prime(\psi(0))$.
  On the other hand,
  $I=\varphi_j^\prime(\varphi_j^{-1}(0)) (\varphi_j^{-1})^\prime(0).$
  Thus, $f^\prime(0)=I$ and we conclude, from 
    a theorem of Carath\'eodory-Cartan-Kaup-Wu (see Corollary
  \ref{cor:cckw1}), that $f$ is the identity.
  Since $\varphi$ has an (nc) inverse, $\varphi^{-1}=\psi$. 
\end{proof}

\begin{defn}
 Let $\Aut_{\rm nc}(\cD)$ denote the free automorphism group of the
 noncommutative domain $\cD$. Thus $\Aut_{\rm nc}(\cD)$ is the set 
 of all free biholomorphisms $f:\cD\to \cD$.  It is evidently a group
 under composition.  Note  that $\cD$ is circular implies 
 $\Aut_{\rm nc}(\cD)$ contains all
  rotations.   Given $g\in \Aut_{\rm nc}(\cD)$, let $\tilde{g}\in\Aut(\cD(1))$ 
  denote its
 commutative collapse; i.e., $\tilde{g}=g[1]$.
\end{defn}

\begin{lemma}
 \label{lem:unique-extend}
 Suppose $\cD$ is a bounded 
  noncommutative domain containing $0$. Assume
  $f,h\in \Aut_{\rm nc}(\cD)$ satisfy $\tilde{f}=\tilde{h}$. Then $f=h$.
\end{lemma}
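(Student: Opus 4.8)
The plan is to exploit the power series expansion for free analytic maps (Proposition~\ref{prop:rep-nc-map}) to reduce the equality $f=h$ to the equality of their ``first level'' restrictions $\tilde f = \tilde h$. The key observation is that the coefficients $F_w$ of the power series of a free analytic map are, via the formulas derived in Lemma~\ref{lem:fF} and Lemma~\ref{lem:degl}, already determined by the values of the map on \emph{scalar} directions $zX$ — indeed, $\fhm(X) = \sum_{|w|=m} F_w\otimes X^w$ where $\fhm$ is the degree-$m$ part obtained by expanding $z\mapsto f(zX)$. So the scheme is: first pass from the automorphisms $f,h\in\Aut_{\rm nc}(\cD)$ to free analytic maps on a common noncommutative neighborhood $\cN_\varepsilon$ of $0$, extract their power series there, and then argue that the hypothesis $\tilde f = \tilde h$ forces all the coefficients to coincide.

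First I would note that $\cD$, being a bounded circular noncommutative domain containing a neighborhood of $0$ (or at least containing $0$; in the latter case some care is needed), contains some $\cN_\varepsilon$. Restricting $f$ and $h$ to $\cN_\varepsilon$ and applying Proposition~\ref{prop:rep-nc-map}, we obtain formal power series $F=\sum_w F_w w$ and $H=\sum_w H_w w$ with $F(X)=f(X)$ and $H(X)=h(X)$ for $X\in\cN_\varepsilon$. Next, for a fixed $X\in\cN_\varepsilon(n)$, expanding the analytic scalar functions $z\mapsto f(zX)$ and $z\mapsto h(zX)$ and comparing with the homogeneous decompositions gives $\sum_{|w|=m}F_w\otimes X^w = \fhm(X)$ and similarly for $h$. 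Now the crucial step: the commutative collapse. Evaluating at $X = (x_1 I_1,\dots,x_g I_1)$, i.e.\ at scalar $1\times 1$ tuples $x=(x_1,\dots,x_g)\in\mathbb C^g$ with $\sum|x_j|^2<\varepsilon^2$, we get $f(x) = \tilde f(x) = \sum_w F_w x^w$ and $h(x)=\tilde h(x)=\sum_w H_w x^w$, where now $x^w$ is the \emph{commutative} monomial. The hypothesis $\tilde f = \tilde h$ on a neighborhood of $0$ in $\mathbb C^g$ therefore gives $\sum_w F_w x^w = \sum_w H_w x^w$ as convergent commutative power series, hence $\sum_{u \text{ a rearrangement of } w} F_u = \sum_{u} H_u$ for each commutative monomial.

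The main obstacle is precisely this last point: abelianization loses information, because distinct words $w$ with the same multiset of letters collapse to the same commutative monomial, so equality of commutative power series only gives equality of the \emph{sums} of coefficients over each abelianization class, not $F_w = H_w$ word by word. To overcome this I would not work with scalar $x\in\mathbb C^g$ directly, but instead feed in carefully chosen \emph{matrix} tuples — or, more cleanly, appeal to the creation-operator machinery of \S\ref{sec:creat}. Concretely: $\tilde f = \tilde h$ means $f$ and $h$ agree on all commuting tuples; but by Proposition~\ref{thm:dilate} together with Lemma~\ref{lem:creation-vs-ncmap}, the value $f(T)$ on the truncated creation operators $T = V_\ell^* S V_\ell$ determines, and is determined by, the coefficients $F_w$ for $|w|\le\ell$, via $f(T)=\sum_{|w|\le\ell}F_w\otimes T^w$ with the $T^w$ linearly independent. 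So it suffices to recover $f(T)$ from $\tilde f$. Here one uses that $T$ is unitarily equivalent to a tuple built out of a nilpotent Jordan block (as in the proof of Lemma~\ref{lem:degl}), and that such a tuple can be perturbed within $\cN_\varepsilon$ to a \emph{diagonalizable} tuple $X\otimes E$ with distinct eigenvalues; on diagonalizable tuples a free map is conjugate to a direct sum of scalar evaluations $\bigoplus f(d_j X_{\rm scalar})$, which depend only on $\tilde f$. Passing to the limit as the diagonal perturbation vanishes, continuity of $f$ and $h$ yields $f(T)=h(T)$ for every $\ell$, hence $F_w=H_w$ for all $w$, hence $f=h$ on $\cN_\varepsilon$, and finally, since $f\circ h^{-1}\in\Aut_{\rm nc}(\cD)$ agrees with the identity on a neighborhood of $0$ and $\cD(n)$ is connected, analyticity propagates the equality to all of $\cD$; therefore $f=h$. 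I expect the Jordan-block perturbation / diagonalization argument to be the delicate part, but it is essentially the one already carried out inside the proof of Lemma~\ref{lem:degl}, so it can be quoted rather than redone.
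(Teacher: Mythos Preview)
Your proposal has a genuine gap, and it is precisely at the point you flag as delicate. You correctly identify the abelianization obstacle: knowing $\tilde f=\tilde h$ only gives $\sum_{u\sim w}F_u=\sum_{u\sim w}H_u$ over each commutative monomial class, not $F_w=H_w$ wordwise. Your proposed fix is to approximate the truncated creation tuple $T=V_\ell^*SV_\ell$ by simultaneously diagonalizable tuples, so that $f(T)$ is a limit of direct sums of \emph{scalar} evaluations. But for $g\ge 2$ the operators $T_1,\dots,T_g$ do not commute, and the set of commuting $g$-tuples is a closed proper subvariety of $M_n(\mathbb C)^g$; a genuinely noncommuting tuple such as $T$ cannot be approximated by commuting (let alone simultaneously diagonalizable) tuples. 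The Jordan-block argument in Lemma~\ref{lem:degl} does something different: it perturbs the \emph{single} auxiliary matrix $J$ in $Y=X\otimes J$, not the tuple $X$, and it never reduces $f(X)$ to scalar evaluations. Indeed the obstruction is intrinsic: the free maps $x_1x_2$ and $x_2x_1$ have identical commutative collapse, so no amount of power-series bookkeeping alone can separate them. The hypothesis $f,h\in\Aut_{\rm nc}(\cD)$ is essential, and your argument never uses it.

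The paper's proof is a two-line application of exactly that hypothesis: set $F=h^{-1}\circ f\in\Aut_{\rm nc}(\cD)$; then $\tilde F$ is the identity, so $F(0)=0$ and $\tilde F'(0)=I$, and Corollary~\ref{cor:cckw1}(2) forces $F$ to be the identity. This is the route you should take.
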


\begin{proof}
  Note that  $F=h^{-1}\circ f\in\Aut_{\rm nc}(\cD).$
  Further, since $\tilde{F}=x$ (the identity), 
  $F$ maps $0$ to $0$ and $\tilde{F}^\prime(0)=I$. Thus,
  by Corollary \ref{cor:cckw1}, $F=x$ and therefore
  $h=f$. 
\end{proof}

\begin{lemma}
 \label{lem:sequential}
    Suppose $\cD$ is a noncommutative  domain which contains
   a noncommutative neighborhood of $0$, and $\cU$ is a bounded noncommutative domain.
    If $f_m:\cD\to \cU$ is a sequence of \cncasc then
    there is a \cnca $f:\cD\to \cU$ and a subsequence
    $(f_{m_j})$ of $(f_m)$ which converges to $f$ 
    uniformly on compact sets. 
\end{lemma}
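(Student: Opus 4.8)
The plan is a Montel-type normal-families argument carried out on each level, followed by a diagonal extraction. Since $\cU$ is bounded, \eqref{eq:bd} supplies a constant $C$ with $\|f_m(X)\|\le C$ for all $m$, all $n$, and all $X\in\cU(n)$. Hence, for each fixed $n$, the family $\{f_m[n]:m\in\N\}$ of holomorphic maps $\cD(n)\to M_n(\C)^{\tg}$ is uniformly bounded on the domain $\cD(n)\subset M_n(\C)^g\cong\C^{gn^2}$, and the several-variable Montel theorem makes it a normal family. So I would first pick a subsequence converging uniformly on compact subsets of $\cD(1)$, then refine it to converge uniformly on compact subsets of $\cD(2)$, and so on; the diagonal subsequence $(f_{m_j})_j$ then converges, for every $n$, uniformly on compact subsets of $\cD(n)$ to some $f[n]\colon\cD(n)\to M_n(\C)^{\tg}$. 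Put $f=(f[n])$.

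The next step is to check that $f$ is a free analytic map. Analyticity of each $f[n]$ is automatic: a locally uniform limit of holomorphic functions is holomorphic. For the free structure, by Proposition~\ref{prop:nc-map-alt} it suffices to verify that $f$ respects intertwining maps, and this passes to the limit --- if $X\in\cD(n)$, $Y\in\cD(m)$ and $\Gamma\colon\C^m\to\C^n$ satisfy $X\Gamma=\Gamma Y$, then $f_{m_j}[n](X)\Gamma=\Gamma f_{m_j}[m](Y)$ for every $j$, and letting $j\to\infty$ gives $f[n](X)\Gamma=\Gamma f[m](Y)$. Thus $f$ is a free map, and, being analytic on each level, a free analytic map.

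What remains --- and this is the crux --- is to see that $f$ takes its values in $\cU$ itself, not merely in $\overline{\cU}$, which is all that pointwise convergence gives for free. Boundedness of $\cU$ controls the size of the $f_m$ but not their behaviour near $\partial\cU$ in the limit, so some extra input is needed, and I would exploit the hypothesis that $\cD$ contains a neighborhood $\cN_\varepsilon$ of $0$. By Proposition~\ref{prop:rep-nc-map}, each $f_m$ is represented on $\cN_\varepsilon$ by a power series $\sum_w F^{(m)}_w w$ with $\|F^{(m)}_w\|\le C/\varepsilon^{|w|}$, together with the uniform (geometric in the degree) convergence estimates of that proposition. Since the coefficients lie in a fixed finite-dimensional space and are uniformly bounded, a further diagonal extraction makes $F^{(m_j)}_w\to F_w$ for every word $w$; the same bounds show that $\sum_w F_w w$ converges on $\cN_\varepsilon$ and, through the uniform estimates, that $f_{m_j}\to f$ uniformly on each smaller neighborhood $\cN_{r\varepsilon}$ with $0<r<1$, so this series represents $f$ near $0$. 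With $f$ described concretely near the origin, and its range a noncommutative subset of $\overline{\cU}$ by Proposition~\ref{prop:range}, one then wants to rule out that this range meets $\partial\cU$. I expect this codomain step, rather than the routine normal-families part, to be the real obstacle: it should require whatever additional structure is available in the intended applications --- typically that the $f_m$, or suitable composites, are free automorphisms, or are normalized by $f_m(0)=0$, so that the free Carath\'eodory-Cartan-Kaup-Wu theorem (Corollary~\ref{cor:cckw1}) and the rigidity of \S\ref{sec:properInj} forbid the limit from degenerating to a boundary-valued map.
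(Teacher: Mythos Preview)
Your Montel-plus-diagonal argument is sound and is essentially dual to the paper's route. The paper first extracts on the power-series \emph{coefficients} (using the uniform bounds $\|\hat f_m(w)\|\le C/\varepsilon^{|w|}$ from Proposition~\ref{prop:rep-nc-map}) to obtain a limiting series $f$ on a small neighborhood $\cN_{\varepsilon/g}$, and only afterward invokes Montel level-by-level to show that each $f[n]$ extends analytically to all of $\cD(n)$ and that $(f_{m_j}[n])$ converges there (via the ``every subsequence has a further subsequence with the same limit'' device). Your order---Montel and diagonal first across all levels, with the power series as a supporting computation---is arguably more direct for the convergence and free-structure claims; the paper's order has the mild advantage that the coefficient extraction pins down the limit globally before any level-by-level work, so one does not need to check separately that the per-level limits fit together.

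You are right to flag the codomain step as a genuine issue, and in fact the paper's own proof does not address it: it simply writes $f:\cD\to\cU$ without justification, whereas a priori the limit lands only in $\overline{\cU}$. The gap is closed not in this lemma but downstream, exactly along the lines you anticipate: in the sole application (Lemma~\ref{lem:nc-aut}) the maps $h_n$ are free automorphisms of a single bounded domain $\cD$, a second extraction on the inverses $h_n^{-1}$ produces a limit $H$, and the CCKW argument (Corollary~\ref{cor:cckw1}) forces $h\circ H$ and $H\circ h$ to be the identity, so $h$ is a genuine biholomorphism $\cD\to\cD$. Your diagnosis of where the extra structure must enter is therefore correct; the lemma as stated is slightly optimistic about the codomain, and you should read it as producing $f:\cD\to\overline{\cU}$.
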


\def\NN{\mathcal N_\varepsilon}

\begin{proof}
  By hypothesis, there is an
  $\varepsilon>0$ such that
  $\cN_\varepsilon \subset \cD$
  and there is a $C>0$ such that each $X\in\cU$
  satisfies $\|X\|\le C.$
  Each $f_m$ has power series expansion,
\[
   f_m =\sum \hat{f}_m(w) w
\]
  with $\|\hat{f}_m(w)\|\le \frac{C}{\varepsilon^n}$, where
  $n$ is the length of the word $w,$
  by Proposition \ref{prop:rep-nc-map}. Moreover,
  by a diagonal argument, 
  there is a subsequence $f_{m_j}$ of $f_m$
  so that $\hat{f}_{m_j}(w)$ converges to some
  $\hat{f}(w)$ for each word $w$.  Evidently,
  $\|\hat{f}(w)\|\le \frac{C}{\varepsilon^n}$ and thus,
\[
  f=\sum \hat{f}(w) w
\]
  defines a free analytic map on the noncommutative $\frac{\varepsilon}{g}$-neighborhood
  of $0$.  (See \ref{prop:pow-nc-map}.)

  We claim that $f$ determines a free analytic map
  on all of $\cD$ and moreover $(f_{m_j})$ converges
  to this $f$ uniformly on compact sets; i.e., for
  each $n$ and compact set $K\subset \cD(n)$,  the
  sequence $(f_{m_j}[n])$  converges uniformly to $f[n]$ on $K$.

  Conserving notation, let $f_j=f_{m_j}$. 
  Fix $n.$  The sequence $f_j[n]: \cD(n) \to \cD(n)$ is uniformly 
  bounded and hence each subsequence $(g_k)$ 
  of $(f_j[n])$ has a further subsequence 
  $(h_\ell)$ which converges uniformly on compact subsets
  to some analytic function $h:\cD(n)\to \cU(n)$.
  On the other hand, $(h_\ell)$ converges to $\fn$
  on the $\frac{\varepsilon}{g}$-neighborhood of $0$
  in $\cD(n)$ and thus $h=\fn$ on this neighborhood.
  It follows that $\fn$ extends to be analytic on
  all of $\cD(n).$  It
  follows that $(f_j[n])$ itself converges uniformly on compact
  subsets of $\cD(n)$.  In particular, $\fn$ 
  is analytic.

  To see that $f$ is a \emph{free} analytic function (and not just
   that each $f(n)$ is analytic), suppose 
  $X\Gamma = \Gamma Y$.  Then $f_j(X)\Gamma = \Gamma f_j(Y)$ 
  for each $j$ and hence the same is true in the limit. 
\end{proof}

 \begin{lemma}
  \label{lem:nc-aut}
    Suppose $\cD$ 
    is a bounded noncommutative domain which contains a noncommutative neighborhood of $0$. 
    Suppose $(h_n)$ is a sequence from $\Aut_{\rm nc}(\cD)$. If
    $\tilde{h_n}$ converges to $g\in \Aut(\cD(1))$
    uniformly on compact sets, then there is
    $h\in \Aut_{\rm nc}(\cD)$ such that $\tilde{h}=g$ and a subsequence
    $(h_{n_j})$ of $(h_n)$ which converges uniformly on compact sets to $h$. 
 \end{lemma}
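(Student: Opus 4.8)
The plan is to pass to convergent subsequences of $(h_n)$ and of $(h_n^{-1})$ using Lemma~\ref{lem:sequential}, to recognize the two limits as mutually inverse continuous free analytic self-maps of $\cD$ via the commutative Carath\'eodory-Cartan-Kaup-Wu rigidity packaged in Corollary~\ref{cor:cckw1} and Lemma~\ref{lem:scalar-cckw-app}, and thereby to exhibit the limit of the subsequence of $(h_n)$ as the required element of $\Aut_{\rm nc}(\cD)$.

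First I would apply Lemma~\ref{lem:sequential} with source and target both equal to $\cD$: since $\cD$ contains a noncommutative neighborhood of $0$ and is bounded, and each $h_n\colon\cD\to\cD$ is a continuous free analytic map, there is a subsequence $(h_{n_j})$ converging uniformly on compact sets to a continuous free analytic map $h\colon\cD\to\cD$. Taking commutative collapses, $\widetilde{h_{n_j}}=h_{n_j}[1]\to h[1]=\tilde h$ uniformly on compact subsets of $\cD(1)$; but $\widetilde{h_n}\to g$ by hypothesis, so $\tilde h=g$. Next, the inverses $h_n^{-1}$ are again continuous free analytic maps $\cD\to\cD$ (because $h_n\in\Aut_{\rm nc}(\cD)$), so a second application of Lemma~\ref{lem:sequential}, followed by a further passage to a subsequence (still denoted $(h_{n_j})$), produces a continuous free analytic map $k\colon\cD\to\cD$ with $h_{n_j}^{-1}\to k$ uniformly on compact sets.

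I then identify $\tilde k=g^{-1}$. Since the commutative collapse is compatible with composition and $\widetilde{h_{n_j}^{-1}\circ h_{n_j}}=x$, we have $\widetilde{h_{n_j}^{-1}}=\widetilde{h_{n_j}}^{-1}$, and these converge uniformly on compact sets to $k[1]=\tilde k$. As $\widetilde{h_{n_j}}\to g\in\Aut(\cD(1))$, Lemma~\ref{lem:scalar-cckw-app}(2) yields $\tilde k=g^{-1}$. Now consider the continuous free analytic self-map $h\circ k\colon\cD\to\cD$. Its commutative collapse is $(h\circ k)[1]=\tilde h\circ\tilde k=g\circ g^{-1}=\mathrm{id}_{\cD(1)}$, which fixes $0$ and has derivative $I$ there; since $\cD$ is bounded, Corollary~\ref{cor:cckw1}(2) forces $h\circ k$ to be the identity on $\cD$. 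Symmetrically $k\circ h$ is the identity on $\cD$. Hence $h$ is a free biholomorphism of $\cD$ with inverse $k$, i.e.\ $h\in\Aut_{\rm nc}(\cD)$, while $\tilde h=g$ and $(h_{n_j})\to h$ uniformly on compact sets; this is precisely what the statement asserts.

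The main obstacle is ruling out degeneration of the limit: a priori the limit $h$ of a sequence of free automorphisms need only be a continuous free analytic self-map, and could fail to be injective or surjective. The resolution is to control simultaneously the limit $k$ of the inverses and to prove $h\circ k=k\circ h=\mathrm{id}$; boundedness of $\cD$ (needed to invoke the normal-families compactness of Lemma~\ref{lem:sequential}) together with the CCKW rigidity (Corollary~\ref{cor:cckw1}, via Lemma~\ref{lem:scalar-cckw-app}) is exactly what makes this work. The only genuinely routine points are the two successive passages to subsequences and the compatibility of $g\mapsto\tilde g$ with composition and inversion.
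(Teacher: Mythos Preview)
Your proof is correct and follows essentially the same route as the paper: two applications of Lemma~\ref{lem:sequential} to extract convergent subsequences of $(h_n)$ and $(h_n^{-1})$, identification of the commutative collapses of the limits via Lemma~\ref{lem:scalar-cckw-app}, and the CCKW rigidity of Corollary~\ref{cor:cckw1} to conclude that the two limits are mutual inverses. The paper's argument is slightly terser but uses the same ingredients in the same order.
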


\def\hf{\hat{f}}

\begin{proof}
  By the previous lemma, there is a subsequence $(h_{n_j})$ of
  $(h_n)$ which converges uniformly on compact subsets of  $\cD$ to a
  free map $h$.  With 
  $H_j=h_{n_j}^{-1}$, another application of the lemma
   produces a further subsequence, $(H_{j_k})$ which converges
   uniformly on compact subsets of
   $\cD$ to some free map $H$.  Hence, without loss of
   generality, it may be assumed that both $(h_j)$
   and $(h_j^{-1})$ converge (in each dimension) uniformly on compact sets to 
   $h$ and $H$ respectively. 

   From Lemma \ref{lem:scalar-cckw-app}, $\tilde{H}$ is the
   inverse of $\tilde{h}=g$. Thus, letting $f$ denote
   the analytic free  mapping $f=h\circ H$, it follows that
  $\tilde{f}$ is the identity and so by 
   Corollary \ref{cor:cckw1}, $f$ is itself the identity.
   Similarly, $H\circ h$ is the identity.  Thus,
   $h$ is a free biholomorphism 
   and thus an element of $\Aut_{\rm nc}(\cD)$. 
\end{proof}

\begin{proposition}
 \label{prop:tclosed}
  If $\cD$ is a bounded noncommutative domain containing
  an $\varepsilon$-neighborhood of $0$, then the set
  $\{\tilde{h} : h\in \Aut_{\rm nc}(\cD)\}$ 
  is a closed subgroup of $\Aut(\cD(1))$.
\end{proposition}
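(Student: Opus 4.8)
The plan is to show that $\Gamma:=\{\tilde h : h\in \Aut_{\rm nc}(\cD)\}$ is a subgroup of $\Aut(\cD(1))$ and that it is closed in the topology of uniform convergence on compact sets. That it is a subgroup is immediate: the commutative collapse $h\mapsto \tilde h = h[1]$ is a group homomorphism from $\Aut_{\rm nc}(\cD)$ into $\Aut(\cD(1))$ (composition of free biholomorphisms collapses to composition of their level-$1$ pieces, and the identity and inverses are preserved), so $\Gamma$ is the image of a group under a homomorphism, hence a subgroup. Moreover, by Lemma \ref{lem:unique-extend}, this homomorphism is injective, so $h$ is uniquely determined by $\tilde h$; this will be convenient but is not strictly needed for the closedness argument.

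For closedness, the main work is the following: suppose $(\tilde h_n)$ is a sequence in $\Gamma$ converging uniformly on compact subsets of $\cD(1)$ to some $g\in\Aut(\cD(1))$. I must produce $h\in\Aut_{\rm nc}(\cD)$ with $\tilde h = g$, which will show $g\in\Gamma$. But this is precisely the content of Lemma \ref{lem:nc-aut}: since $\cD$ is a bounded noncommutative domain containing a noncommutative neighborhood of $0$, and $(\tilde h_n)$ converges uniformly on compact sets to $g\in\Aut(\cD(1))$, Lemma \ref{lem:nc-aut} yields an $h\in\Aut_{\rm nc}(\cD)$ with $\tilde h = g$ (together with a subsequence of $(h_n)$ converging to $h$). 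Hence $g=\tilde h\in\Gamma$, and $\Gamma$ is closed.

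So the proof is essentially a two-line assembly: subgroup is formal, closedness is Lemma \ref{lem:nc-aut}. The only genuine content was already extracted into the preceding lemmas, so there is no real obstacle at this stage; the hard analytic step — lifting a convergent sequence of commutative automorphisms to a convergent sequence of free automorphisms — lives in Lemma \ref{lem:sequential} and Lemma \ref{lem:nc-aut}, which in turn rest on the uniform power series bounds of Proposition \ref{prop:rep-nc-map} and the free CCKW theorem (Corollary \ref{cor:cckw1}). I would just be careful to state explicitly that one first invokes the subgroup property and then applies Lemma \ref{lem:nc-aut} to an arbitrary convergent sequence from $\Gamma$ to conclude the limit lies in $\Gamma$. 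If desired I could also remark, via Lemma \ref{lem:unique-extend}, that $h\mapsto\tilde h$ is a continuous group isomorphism of $\Aut_{\rm nc}(\cD)$ onto the closed subgroup $\Gamma$ of $\Aut(\cD(1))$, which is how this proposition will be used in the proof of Theorem \ref{thm:nc-circular}.
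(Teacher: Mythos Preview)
Your proposal is correct and follows essentially the same approach as the paper: the subgroup property is formal (the paper does not even bother to mention it), and closedness is reduced directly to Lemma \ref{lem:nc-aut}. Your additional remarks about injectivity via Lemma \ref{lem:unique-extend} and the underlying analytic machinery are accurate but go beyond what the paper's two-sentence proof records.
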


\begin{proof}
  We must show if $h_n\in\Aut_{\rm nc}(\cD)$ and $\tilde{h_n}$ 
  converges to some $g\in \Aut(\cD(1))$, then there
  is an $h\in\Aut_{\rm nc}(\cD)$ such that $\tilde{h}=g$.
  Thus the proposition is an immediate consequence
  of the previous result, Lemma \ref{lem:nc-aut}.
\end{proof}

\begin{proof}[Proof of Theorem {\rm\ref{thm:nc-circular}}]
  In the BKU Theorem \ref{thm:BKU-really}, first choose $S=\cD(1)$ and let
\[
 G=\{\tilde{f}: f\in\Aut_{\rm nc}(\cD)\}.
\]
  Note that $G$ is a subgroup of $\Aut(S)$ which 
 contains all rotations. Moreover, by Proposition
 \ref{prop:tclosed}, $G$ is closed. 
  Thus Theorem \ref{thm:BKU-really} applies to $G$. 
  Combining the two
  conclusions of the theorem,
   it follows that $G(0)$ is a closed complex submanifold
  of $D$.

  Likewise, let $T=\cU(1)$ and let
\[
 H=\{\tilde{h}: h\in\Aut_{\rm nc}(\cU)\}
\] 
  and note that $H$ is a closed subgroup of $\Aut(T)$
  containing all rotations.   Consequently,
  Theorem \ref{thm:BKU-really} also applies to $H$. 

  Let $\psi:\cD\to \cU$ denote a given  free biholomorphism.
  In particular, $\tilde{\psi}:S\to T$
  is biholomorphic.  Observe,
  $H=\{\tilde{\psi}\circ g \circ \tilde{\psi}^{-1}: g\in G\}$.

    The set
  $\tilde{\psi}(G(0))$ is a closed complex submanifold of $S$, since
  $\tilde{\psi}$ is biholomorphic. On the other hand,
  $\tilde{\psi}(G(0)) = H(\tilde{\psi}(0)).$  Thus,
  by (ii) of Theorem \ref{thm:BKU-really}
  applied to $H$ and $T$, it follows that 
  $\tilde{\psi}(0) \in H(0)$.  Thus, there is
  an $h\in\Aut_{\rm nc}(\cU)$ such that
  $\tilde{h}(\tilde{\psi}(0))=0$. Now  $\varphi=h\circ \psi:\cD\to \cU$
  is a  free biholomorphism between bounded circular
  noncommutative  domains  and $\varphi(0)=0$. Thus,
  $\varphi$ is linear by Theorem \ref{thm:circLin}.
\end{proof}

\subsection{A concrete example of a nonlinear biholomorphic self-map
on an nc LMI Domain}
\label{sec:exYes}
It is surprisingly difficulty to find proper self-maps on LMI domains
which are not linear. In this section we present  
the only (up to trivial modifications)
univariate example,
of which
we are aware.   Of course, by Theorem \ref{thm:circLin} the underlying
domain cannot be circular. In two variables, it can happen that
two LMI domains are linearly equivalent and yet there is a
nonlinear biholomorphism between them taking $0$ to $0$.  We conjecture
this cannot happen in the univariate case.  

Let $A=\begin{bmatrix} 1&1\\ 0&0\end{bmatrix}$
and let $\cL$ denote the univariate $2\times 2$ linear pencil,
$$\cL(x):= I + Ax + A^* x^*
=
\begin{bmatrix} 1 + x +x^* & x \\
                        x^* & 1 \end{bmatrix}.
$$
Let 
$\cD_\cL=\{X: \| X-1 \| < \sqrt 2\}.$
For $\theta\in\R$ consider
$$
f_\theta (x):= \frac{ \e x}{1+x- \e x}.
$$
Then $f_\theta:\cD_\cL\to\cD_\cL$ is a proper
\cncacomman
$f_\theta(0)=0$, and $f^\prime_\theta(0)=\exp(i \theta)$.
Conversely, every proper \cnca  $f:\cD_\cL\to\cD_\cL$ fixing the origin
equals one of the $f_\theta$.

For proofs we refer to \cite[\S 5.1]{HKM3}.

\section{Miscellaneous}\label{sec:misc}

In this section we 
briefly overview some of our other,
more algebraic, results dealing with convexity and LMIs.
While many of these results do have analogs in the present 
  setting of complex scalars and
analytic variables, they appear in the
literature with real scalars and
symmetric free noncommutative variables.

Let $\Rx$ denote the the $\R$-algebra freely generated by $g$
noncommuting letters $x=(x_1,\ldots,x_g)$ with the involution ${}^*$
which, on a word $w\in \cFg,$ reverses  the order; i.e., if
\beq
 \label{eq:w}
  w=x_{i_1} x_{i_2}\cdots x_{i_k},
\eeq
then
\[
  w^* = x_{i_k}\cdots x_{i_2} x_{i_1}.
\]
 In the case $w=x_j,$ note that $x_j^*=x_j$ and for this
  reason we sometimes refer to the variables as 
  {\bf symmetric}.

Let $\SRng$ denote the $g$-tuples $X=(X_1,\dots,X_g)$ of $n\times n$
symmetric real matrices.  A word $w$ as in equation \eqref{eq:w} is
evaluated at $X$ in the obvious way,
\[
 w(X)=X_{i_1}X_{i_2}\cdots X_{i_k}.
\]
  The evaluation extends linearly to polynomials $p\in\Rx$. 
  Note that the  involution on $\Rx$ is compatible with evaluation
  and matrix transpose in that $p^*(X)=p(X)^*$. 

   Given $r$, let $M_r \otimes \Rx$ denote the $r\times r$ matrices
   with entries from $\Rx$.    The evaluation on $\Rx$ extends to 
  $M_r \otimes \Rx$ by simply evaluating entrywise; and the
  involution extends too by $(p_{j,\ell})^* = (p_{\ell,j}^*)$. 

  A polynomial $p\in M_r \otimes \Rx$ is {\bf symmetric} if $p^*=p$ and in this
  case, $p(X)^*=p(X)$ for all $X\in\SRng$.   In this setting, the analog of an LMI is the
  following.  Given $d$ and symmetric $d\times d$ matrices, the symmetric
  matrix-valued degree one polynomial,
\[
   L= I - \sum A_j x_j
\]
 is a {\bf  monic linear pencil.}   The inequality $L(X)\succ
 0$ is then an LMI.    Less formally, the polynomial $L$ itself will
 be referred to as an LMI.

\subsection{nc convex semialgebraic is LMI}\label{sec:convexIsLMI}

\def\posn{\mathcal P[n]}
\def\pos{\mathcal P}
Suppose  $p\in M_r \otimes \Rx$ and $p(0)=I_r$.
  For each positive integer $n$, let
\[
  \pos_p(n) = \{X\in \SRng \colon p(X) \succ0\},
\]
  and define $\pos_p$ to be the sequence
  (graded set) $(\pos_p(n))_{n=1}^\infty$.
   In analogy with classical real algebraic geometry
 we call sets of the form $\pos_p$
 {\bf noncommutative basic open semialgebraic sets}.
  (Note that
  it is not necessary to explicitly consider
  intersections of noncommutative basic open semialgebraic sets
  since the intersection $\pos_p\cap \pos_q$
  equals $\pos_{p\oplus q}$.)

 \begin{theorem}[\cite{HM}]\label{thm:convexIsLMI}
   Every convex bounded noncommutative basic open semialgebraic set
   $\pos_p$ has an LMI representation; i.e., there is a monic 
   linear pencil $L$ such that   $\pos_p=\pos_L.$ 
\end{theorem}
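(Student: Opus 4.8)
The plan is to deduce Theorem~\ref{thm:convexIsLMI} from an analysis of the boundary geometry of $\pos_p$, following the strategy of separating hyperplanes in the matrix convexity setting. First I would record the basic reductions: by adding variables we may assume $p$ is symmetric, and since $\pos_p$ is bounded and convex with $0$ in its interior it is the intersection (over $n$ and over boundary points $X$) of the matrix half-spaces determined by the tangent planes to $\partial\pos_p(n)$. The content of the theorem is that only \emph{finitely many} of these half-spaces, assembled into a single monic pencil $L$, already cut out $\pos_p$. So the first real step is to attach to each boundary point $X_0$ of $\pos_p(n)$ a supporting ``LMI'' $L_{X_0}(x)=I-\sum (A_j)_{X_0}x_j$: the coefficients come from a minimal kernel vector $v$ of $p(X_0)$ together with the directional derivative $p'(X_0)[\,\cdot\,]$, after compressing to the subspace generated by the orbit of $v$ under the matrix coefficients of $p$ (this is the standard ``noncommutative Hahn--Banach / GNS'' compression that produces a linear object from a nonlinear $p$). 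One shows $\pos_p\subset \pos_{L_{X_0}}$ by a convexity/positivity argument: if $\pos_p$ were not contained in $\pos_{L_{X_0}}$, there would be a point of $\pos_p$ on the wrong side of the tangent plane at $X_0$, contradicting convexity of $\pos_p(n)$ and the fact that $p$ changes sign transversally there.

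Next I would prove that these supporting pencils have \emph{bounded size}: the dimension of the compression space is at most (something like) $r\cdot\deg(p)$ or, after a further argument, an absolute bound independent of the boundary point, because the relevant data is governed by the coefficients of $p$, which are finitely many fixed matrices. Granting a uniform dimension bound $d_0$, the set of all monic pencils of size $\le d_0$ with $\pos_p\subset\pos_L$ is a closed, bounded subset of a finite-dimensional space; a compactness argument lets one pass to a single pencil $L$ (e.g. a direct sum of a finite covering subfamily, then truncated, or a limit of averages) whose solution set is exactly $\bigcap_{X_0} \pos_{L_{X_0}}$. The final step is to check the reverse inclusion $\pos_L\subset \pos_p$: if $Y\notin\pos_p(m)$, then by convexity $Y$ lies outside or on the boundary, so there is a boundary point $X_0$ (in some dimension $n$, after possibly forming $Y\oplus(\text{interior point})$ and scaling) whose supporting half-space $\pos_{L_{X_0}}$ excludes $Y$; since $L_{X_0}$ is one of the blocks of $L$, also $Y\notin\pos_L(m)$. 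Here one uses that noncommutative basic open semialgebraic sets are closed under restriction to reducing subspaces and direct sums, exactly the structural facts the paper has already emphasized.

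I expect the main obstacle to be the uniform bound on the size of the supporting pencils. Producing \emph{one} supporting LMI at each smooth boundary point is a local linearization that is essentially formal, but ensuring that a \emph{finite} collection suffices — i.e. that $\pos_p$ is the solution set of finitely many of them — requires genuine work: one must understand the variety of ``bad'' directions along $\partial\pos_p$ well enough to apply a compactness or Noetherian argument, and one must handle singular boundary points where $p(X_0)$ has higher-dimensional kernel. The semialgebraic hypothesis on $p$ is exactly what makes this tractable: it forces the boundary to be a real algebraic set of bounded complexity, so the family of tangent LMIs is itself parametrized by a bounded-dimensional algebraic family, and a single pencil of bounded size can dominate it. I would isolate this as the key lemma and prove the two inclusions around it as above, deferring the finer semialgebraic geometry to the cited argument in \cite{HM}.
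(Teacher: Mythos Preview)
The paper does not contain a proof of this theorem. It is stated in \S\ref{sec:convexIsLMI} with the attribution \cite{HM} and is followed only by a one-sentence remark contrasting the result with the commutative situation; no argument is given or even sketched. So there is nothing in the present paper to compare your proposal against.

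That said, a brief comment on the proposal itself. You have correctly located the crux: producing a single supporting linear pencil at a boundary point is soft, and the real content is a uniform bound that lets a \emph{single} finite pencil suffice. But the compactness step you sketch has a genuine gap. Knowing that every supporting pencil $L_{X_0}$ has size at most $d_0$ does not by itself yield a monic pencil $L$ with $\pos_L=\bigcap_{X_0}\pos_{L_{X_0}}$: a direct sum of infinitely many size-$d_0$ pencils is not a finite pencil, a finite subfamily need not cut out exactly $\pos_p$, and ``a limit of averages'' of pencils does not in general have the intersection of their positivity domains as its own positivity domain. The argument in \cite{HM} does not proceed by a geometric covering of the boundary; it extracts the pencil algebraically from the structure of a minimum-degree defining polynomial for $\pos_p$ (via its border-vector/middle-matrix representation), which is what produces a single finite $L$ in one stroke. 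Your final sentence, deferring ``the finer semialgebraic geometry'' to \cite{HM}, is thus deferring essentially the entire proof.
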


Roughly speaking, Theorem \ref{thm:convexIsLMI} states that
nc semialgebraic and convex equals LMI. Again, this result
is much cleaner than the situation in the classical commutative
case, where the gap between convex semialgebraic and LMI is 
large and not understood very well, cf.~\cite{HV}.

\subsection{LMI inclusion}
The topic of our paper \cite{HKM2} is LMI inclusion
and LMI equality.
Given LMIs $L_1$ and $L_2$
in the same number of variables
it is natural to ask:
\ben[\rm (Q$_1$)]
\item
does one dominate the other, that is,
does
$L_1(X)\succeq0$ imply $L_2(X)\succeq0$?
\item
are they mutually dominant, that is,  do
 they have the same solution set?
\een
As we show in \cite{HKM2}, 
the  domination questions (Q$_1$) and (Q$_2$)
have elegant answers, indeed reduce to semidefinite programs (SDP)
which we show how to construct.
A positive answer to (Q$_1$) is equivalent to the existence
of matrices $V_j$ such that
\beq\label{eq:abstr}
L_2(x)=V_1^* L_1(x) V_1 + \cdots + V_\mu^* L_1(x) V_\mu.
\eeq
 As for (Q$_2$) we show that $L_1$ and $L_2$
  are mutually dominant if and only if, up to
 certain  redundancies described in the paper,
 $L_1$ and $L_2$
 are unitarily equivalent.

A  basic observation is that these 
LMI domination
problems are equivalent
to  the complete positivity of certain
linear maps $\tau$ from a subspace of matrices to
a matrix algebra.

\subsection{Convex Positivstellensatz}\label{sec:Posss}

The equation
\eqref{eq:abstr} can be understood as a linear Positivstellensatz,
i.e., it gives an algebraic certificate for $L_2|_{\cD_{L_1}}\succeq0$.
Our paper \cite{HKM4} greatly extends this to nonlinear $L_2$.
To be more precise,
suppose $L$ is a monic linear pencil in $g$ variables and let $\cD_L$ be 
the corresponding nc LMI.
Then a symmetric noncommutative polynomial $p\in\Rx$ is \emph{positive semidefinite}
on $\cD_L$ if and only if it
has a weighted sum of squares representation with optimal degree bounds.
Namely,
\beq\label{eq:posss}
p
= s^* s   + \sum_j^{\rm finite} f_j^* L f_j,
\eeq
where
 $s, f_j$ are vectors of noncommutative polynomials of degree no greater than
 $\frac{\deg(p) }{2}$. (There is also a bound, coming from 
 a theorem of Carath\' eodory on convex sets in finite dimensional
vector spaces and  depending only on
  the degree of $p$, on the number of terms in the sum.) 
This result contrasts sharply with the commutative setting,
where the degrees of $s, f_j$ are vastly greater than $\deg(p)$
 and assuming only $p$  nonnegative yields a clean
Positivstellensatz so seldom that the cases are noteworthy \cite{Sce}.

The main ingredient of the proof is 
a solution to a noncommutative moment problem, i.e.,
an analysis of
 rank preserving extensions of truncated
noncommutative
 Hankel matrices.
For instance,
 any such \emph{positive definite} matrix $M_k$  of ``degree $k$'' has, for
each $m\geq 0$,
  a positive semidefinite
 Hankel extension $M_{k+m}$ of degree $k+m$ and the same rank
 as $M_k.$
For details and proofs see \cite{HKM4}.

\subsection{Further topics}
  The reader who has made it to this point may be interested in some
  of the surveys, and the references therein,
  on various aspects of noncommutative (free)
  real algebraic geometry, and free positivity.

  The article \cite{HP} treats positive noncommutative polynomials
  as a part of the larger tapestry of spectral theory and 
  optimization.  
In \cite{HKM6} this topic is expanded with further Positivstellens\"atze
and computational aspects.
The survey  \cite{dOHMP} provides a serious
  overview of the connection between noncommutative convexity
  and systems engineering.  The note \cite{HMPV} emphasizes the theme,
  as does the body of this article, 
  that convexity in the noncommutative setting appears to be no
  more general than LMI. 
  Finally, a tutorial with numerous exercises emphasizing the role 
  of the middle matrix and border vector representation 
  of the Hessian of a polynomial  in analyzing convexity 
  is  \cite{HKM5}. 
 
\section*{Acknowledgment and dedication}
 The second and third author appreciate the  opportunity provided
 by this volume to thank Bill for many years of his most generous
 friendship.   Our association with Bill and his many  collaborators
 and friends has had a profound, and decidedly positive,
 impact on our lives, both mathematical and personal.  We are looking
 forward to many ??s to come.


\begin{thebibliography}{dOHMP666}
\linespread{1.25}
\small

\bibitem[BGM06]{BGM}
J.A. Ball, G. Groenewald, T. Malakorn:
 Bounded Real Lemma for Structured Non-Commutative
  Multidimensional Linear Systems and Robust Control,
{\it Multidimens. Syst. Signal Process.} {\bf 17} (2006) 119--150

\bibitem[BKU78]{BKU}
R. Braun, W. Kaup, H. Upmeier:
On the automorphisms of circular and Reinhardt domains in complex Banach spaces, {\it Manuscripta Math.} {\bf 25} (1978) 97--133

\bibitem[DAn93]{DAn}
J.P. D'Angelo:
{\it Several complex variables and the geometry of real hypersurfaces},
CRC Press, 1993

\bibitem[dOHMP09]{dOHMP}
  M.C. de Oliveira, J.W. Helton, S.  McCullough, M. Putinar:
   Engineering systems and free semi-algebraic geometry,
In:
   {\it Emerging applications of algebraic geometry,} 17--61, IMA Vol. Math. Appl. {\bf 149}, Springer,  2009

\bibitem[Fo93]{For}
F. Forstneri\v c:
Proper holomorphic mappings: a survey. In: 
{\it Several complex variables (Stockholm, 1987/1988)} 297-–363, Math. Notes {\bf 38}, Princeton Univ. Press,  
1993


\bibitem[Fr84]{Fr} A.E. Frazho:
Complements to models for noncommuting operators, 
{\it J. Funct. Anal.} {\bf 59} (1984) 445--461


\bibitem[GP74]{GP} V. Guillemin, A. Pollack:
{\it Differential Topology}, Prentice-Hall, 1974

\bibitem[Hg78]{Hg} 
S. Helgason:
{\it Differential geometry, Lie groups, and symmetric spaces},
Pure and Applied Mathematics {\bf 80}, Academic Press, Inc., 1978


\bibitem[HBJP87]{HBJP}
J.W. Helton, J.A. Ball, C.R. Johnson, J.N. Palmer:
{\it Operator theory, analytic functions, matrices, and electrical engineering},
CBMS Regional Conference Series in Mathematics {\bf 68}, AMS, 1987

\bibitem[HKM11a]{HKM1}
J.W. Helton, I. Klep, S. McCullough:
Analytic mappings between noncommutative pencil balls,
{\it J. Math. Anal. Appl.} {\bf 376} (2011) 407--428


\bibitem[HKM11b]{HKM3}
J.W. Helton, I. Klep, S. McCullough:
Proper Analytic Free Maps, 
{\it J. Funct. Anal.} {\bf 260} (2011) 1476--1490


\bibitem[HKM12]{HKM6}
J.W. Helton, I. Klep, S. McCullough:
Convexity and Semidefinite Programming in dimension-free matrix unknowns,
In: {\em Handbook of Semidefinite, Conic and Polynomial Optimization} edited by M. Anjos and J. B. Lasserre, 377--405, Springer, 2012

\bibitem[HKM+]{HKM2}
J.W. Helton, I. Klep, S. McCullough:
The matricial relaxation of a linear matrix inequality,
to appear in {\it Math. Program.} \url{http://arxiv.org/abs/1003.0908}

\bibitem[HKM+$^2$]{HKM4}
J.W. Helton, I. Klep, S. McCullough:
The convex Positivstellensatz in a free algebra, 
to appear in {\it Adv. Math.} \url{http://arxiv.org/abs/1102.4859}


\bibitem[HKM+$^3$]{HKM5}
 J.W. Helton, I. Klep, S. McCullough:
Free convex algebraic geometry, to appear  in
{\it Semidefinite Optimization and Convex Algebraic Geometry} edited by G. Blekherman, P. Parrilo, R. Thomas, SIAM, 2012

\bibitem[HKMS09]{HKMS}
J.W. Helton, I. Klep, S. McCullough, N. Slinglend:
Non-commutative ball maps,
{\it J. Funct. Anal.} {\bf 257} (2009) 47--87

\bibitem[HM03]{HM04}
J.W. Helton, S.~McCullough:
Convex noncommutative polynomials have degree two or less,
{\em SIAM J. Matrix Anal. Appl.},
{\bf  25} (2003( 1124--1139


\bibitem[HM+]{HM}
J.W. Helton, S. McCullough:
Every free basic convex semi-algebraic set has an LMI representation,
to appear in {\it Ann. of Math.}
\url{http://arxiv.org/abs/0908.4352}


\bibitem[HMPV09]{HMPV}
   J.W. Helton, S. McCullough, M. Putinar, V. Vinnikov:
     Convex matrix inequalities versus linear matrix
    inequalities, {\it IEEE Trans. Automat. Control} {\bf 54} (2009), 
    no. 5, 952--964


\bibitem[HP07]{HP}
   J.W. Helton, M. Putinar:
     Positive polynomials in scalar and matrix variables, the spectral
     theorem, and optimization,
    {\it Operator theory, structured matrices, and dilations} 229-306,
    Theta Ser. Adv. Math. {\bf 7}, Theta,  2007


\bibitem[HV07]{HV}
J.W. Helton, V. Vinnikov:
Linear matrix inequality representation of sets, {\it Comm. Pure Appl. Math.}
{\bf 60} (2007) 654--674

\bibitem[KVV--]{KVV}
D. Kalyuzhnyi-Verbovetski\u\i{}, V. Vinnikov:
Foundations of noncommutative function theory,
{\it in preparation}

\bibitem[KU76]{KU}
W. Kaup, H. Upmeier:
Banach spaces with biholomorphically equivalent unit balls are isomorphic,
{\it Proc. Amer. Math. Soc.} {\bf 58} (1976) 129--133

\bibitem[Kr01]{Krantz}
 S.G. Krantz:
\textit{Function Theory of Several Complex Variables},
AMS, 2001

\bibitem[MS11]{MS11}
P.S. Muhly, B. Solel: Progress in noncommutative function theory,
{\it Sci. China Ser. A} {\bf 54} (2011) 2275--2294


\bibitem[Po89]{Po0}
G. Popescu: Isometric Dilations for Infinite Sequences of Noncommuting Operators, {\it Trans. Amer. Math. Soc.} {\bf 316} (1989) 523--536


\bibitem[Po06]{Po1}
 G. Popescu:
  Free holomorphic functions on the unit ball of
   $\mathcal B(\mathcal H)^n$,
  {\it J. Funct. Anal.} {\bf 241} (2006) 268--333

\bibitem[Po10]{Po2} G. Popescu:
Free holomorphic automorphisms of the unit ball of $B(H)^n$,
    {\it J. reine angew. Math.} {\bf 638} (2010) 119--168

\bibitem[RR85]{RR}
   M. Rosenblum, J. Rovnyak:
  {\it Hardy Classes and Operator Theory,}
   Oxford University Press, 1985

\bibitem[Sce09]{Sce}
C. Scheiderer:
Positivity and sums of squares: a guide to recent results. In: {\em Emerging applications of algebraic geometry} 271--324, IMA Vol. Math. Appl. {\bf 149},  
Springer, 2009


\bibitem[SIG98]{SIG97}
R.E. Skelton, T. Iwasaki, K.M. Grigoriadis:
{\em A unified algebraic approach to linear control design},
Taylor \&
  Francis Ltd., 1998

\bibitem[SV06]{AIM}
D. Shlyakhtenko, D.-V. Voiculescu:
Free analysis workshop summary: American Institute of Mathematics,
\url{http://www.aimath.org/pastworkshops/freeanalysis.html}

\bibitem[Ta73]{Taylor}
J.L. Taylor: 
Functions of several noncommuting variables,
{\it Bull. Amer. Math. Soc.} {\bf 79} (1973) 1--34

\bibitem[Vo04]{Vo1} D.-V. Voiculescu:
Free analysis questions I: Duality transform for the
coalgebra of $\partial_{X:B}$, {\it International Math. Res. Notices} {\bf 16}
(2004) 793--822

\bibitem[Vo10]{Vo2} D.-V. Voiculescu:
Free Analysis Questions II: The Grassmannian Completion and The Series
Expansions at the Origin, {\it J. reine angew. Math.} {\bf 645} (2010) 155--236
 

\end{thebibliography}
\end{document}